\newtheorem{theorem}{Theorem}
\newtheorem{corollary}[theorem]{Corollary}
\newtheorem{conjecture}[theorem]{Conjecture}
\newtheorem{example}[theorem]{Example}
\newtheorem{lemma}[theorem]{Lemma}
\newtheorem{problem}[theorem]{Problem}
\def\qed{\vbox{\hrule
 \hbox{\vrule\hbox to 5pt{\vbox to 8pt{\vfil}\hfil}\vrule}\hrule}}
\def\endproof{\unskip \nobreak \hskip0pt plus 1fill \qquad \qed \par}
\newcommand{\Row}{\mbox{\rm Row}}
\newcommand{\row}{\mbox{\rm row}}
\newcommand{\col}{\mbox{\rm col}}
\newcommand{\mb}{\mathbb}
\begin{document}

\title{Alternating Sign Matrices and Hypermatrices, and a Generalization of Latin Squares}

\author{
 Richard A. Brualdi\footnote{Department of Mathematics, University of Wisconsin, Madison, WI 53706, USA. {\tt brualdi@math.wisc.edu}} \\
 Geir Dahl\footnote{Department of Mathematics,  
 University of Oslo, Norway.
 {\tt geird@math.uio.no}} }

\maketitle

\begin{abstract}
 An alternating sign matrix, or ASM,  is a $(0, \pm 1)$-matrix where the nonzero entries in each row and column alternate in sign. We generalize this notion to hypermatrices: an $n\times n\times n$ hypermatrix $A=[a_{ijk}]$ is an {\em alternating sign hypermatrix}, or ASHM, if each of its planes, obtained by fixing one of the three indices, is an ASM. Several results concerning ASHMs are shown, such as finding the maximum number of nonzeros of an $n\times n\times n$ ASHM, and  properties  related to Latin squares. Moreover, we investigate completion problems, in which one asks if a subhypermatrix can be completed (extended) into an ASHM. We show several theorems of this type.
 \end{abstract}


\noindent {\bf Key words.} Alternating sign matrix, hypermatrix, completion.

\noindent
{\bf AMS subject classifications.} 05B20, 15A69, 15B48.

\section{Introduction}

Let $A$ be an $n\times n$ $(0,\pm 1)$-matrix. Then $A$ is an {\it alternating sign matrix}, abbreviated ASM, provided in each of the $2n$ lines of $A$, that is, its rows and columns,
the nonzeros alternate beginning and ending with a $+1$. Permutation matrices are ASMs without any $-1$'s. 
ASMs were defined by Mills, Robbins, and  Ramsey \cite{MRR83} and have a fascinating history which can be found in \cite{Bressoud1999}.  Extending some of the work reported in \cite{Behrend07} and \cite{Striker09}, we carried out  in \cite{BrualdiDahl17}  a recent study of ASMs and related matrix classes and polyhedra  
where additional references, besides those given here, can be found.
Our goal is to generalize ASMs to three-dimensional matrices called {\it hypermatrices}. In doing so, we were led to a fascinating generalization of classical latin squares.

Let $A=[a_{ijk}]$ be an $n\times n\times n$ hypermatrix.
We refer to $i$ as the {\it row index}, $j$ as the {\it column index},
and $k$ as the {\it vertical index} of the hypermatrix $A$.
Then $A$ has three types of lines, each of cardinality $n$:
\begin{itemize}
\item[\rm (i)] The {\it row lines} (variable row index)
$ A_{*jk}=[a_{ijk}: i=1,2,\ldots n], \;(1\le j,k\le n)$;
\item[\rm (ii)] The {\it column lines} (variable column index)
$ A_{i*k}=[a_{ijk}: j=1,2,\ldots n], \;(1\le i,k \le n)$;
\item[\rm (iii)] The {\it vertical lines} (variable vertical index)
$ A_{ij*}=[a_{ijk}: k =1,2,\ldots n], \;(1\le i,j\le n)$.
\end{itemize}
Similarly, $A$ has three types of planes, each of cardinality $n^2$:
\begin{itemize}
\item[\rm (i)] The {\it horizontal-planes} (or {\it row-column-planes}) (variable row and column indices)
$ A_k^{\rm h}=A_{**k}=[a_{ijk}: i,j=1,2,\ldots n], \;(1\le k \le n)$;
\item[\rm (ii)] The {\it column-vertical-planes}  (variable row and vertical indices)
$ A_j^{\rm cv}=A_{*j*}=[a_{ijk}: i,k=1,2,\ldots n], \;(1\le j\le n)$;
\item[\rm (iii)] The {\it row-vertical-planes}  (variable  column and vertical indices)
$ A_i^{\rm rv}=A_{i**}=[a_{ijk}:j,k =1,2,\ldots n], \;(1\le i\le n)$.
\end{itemize}
The intersection of two planes of different types is a line; for instance, the intersection of a horizontal-plane with a row-vertical plane is a column line:
\[ A_{**k}\hspace{.05in} \cap\hspace{.05in} A_{i**}=
A_{i*k}.\]
We usually denote the $n\times n\times n$ hypermatrix $A$ by
\[A=[A_1^{\rm h},A_2^{\rm h},\ldots,A_n^{\rm h}],\mbox{ abbreviated to } A=[A_1,A_2,\ldots,A_n]\] where  the $A_i$ are the horizontal-planes $A_{**k}$.
To denote the fact that $A$ is a 3-dimensional array, we also write
\[A=A_1\nearrow A_2\nearrow\cdots\nearrow A_n\]
where the north-east arrow  $A_i\nearrow A_{i+1}$ is read as $A_i$ is below $A_{i+1}$ (or $A_{i+1}$ is on top of $A_i$). We can also write
\[A=[A_1^{\rm cv},A_2^{\rm cv},\ldots,A_n^{\rm cv}] \;
\mbox{ and } \;A=[A_1^{\rm rv},A_2^{\rm rv},\ldots,A_n^{\rm rv}].\] 

We can generalize an alternating sign matrix to an $n\times n\times n$  $(0,\pm 1)$-hypermatrix $A$ in two natural ways:
\begin{itemize}
\item[\rm (a)] $A$ is an {\it alternating sign hypermatrix}, abbreviated ASHM, provided in each of its  $3 n^2$ lines, the nonzeros alternate beginning and ending with a $+1$, equivalently, each of its $3n$ planes is an ASM.
\item[\rm (b)] $A$ is a  {\it  planar alternating sign hypermatrix}, abbreviated PASHM, provided each of its $n\times n$ horizontal planes is an ASM and each of its vertical lines sums to 1. (We can replace horizontal planes with any of the other two types of planes and vertical lines with the appropriate lines.)
\end{itemize}

The property of being a PASHM is weaker than being a ASHM.
If in our definition of a PASHM we replace the condition that the vertical lines sum to 1 with the condition that the nonzeros in the vertical lines alternate beginning and ending with a $+1$, then the result would be an ASHM. The weaker condition that the vertical lines sum to 1 connects the horizontal-plane  ASMs $A_1,A_2,\ldots,A_n$ with one another.

If $[A_1, A_2,\ldots,A_n]$ is a ASHM (respectively, a PASHM) then so is the hypermatrix $[A_n, \ldots,A_2,A_1]$ obtained by reversing the order of its horizontal planes. It follows that for $i\le n/2$, the ASMs $A_i$ and $A_{n-i}$ satisfy similar properties. More generally, the set of $n\times n\times n$ ASHMs is invariant under the symmetry group of the unit cube. This group, called the {\it hyperoctahedral group}\footnote{It is also the group of signed permutations $\pi$ of $\{\pm 1,\pm 2,\pm 3,\pm 4\}$ satisfying $\pi(-i)=-\pi(i)$ for all $i$.} is the direct product ${\mathcal S}_2\times {\mathcal S}_4$ of the symmetric permutation groups ${\mathcal S}_2$ and ${\mathcal S}_4$ and  has size 48.

An $n\times n\times n$ {\it permutation hypermatrix}  is a 
$(0,1)$-hypermatrix with exactly one 1 in each line, and these are the ASHMs 
not having any $-1$'s. 
If $A=[A_1,A_2,\ldots,A_n]$ is a PASHM, then  $A_1$ and $A_n$ and, in fact, each of the first  and last planes of each of the three types, must be permutation matrices. Thus, thinking of $A$ as an $n\times n\times n$ cube, the six boundary facets of the  cube  are $n\times n$ permutation matrices. 

We remark that the question of generalizing the Birkhoff - von Neumann theorem (which asserts that the extreme points of the polytope of $n\times n$ doubly stochastic matrices are the $n\times n$ permutation matrices) to tensors (that is, hypermatrices) is discussed in \cite{Csima1970} (see also, more recently, \cite{CuiLiNg04}). It was shown that the polytope of multistochastic hypermatrices in general has other vertices than those corresponding to permutation hypermatrices. 

\begin{example}{\rm The following is a $3\times 3\times 3$ ASHM:
\begin{equation}\label{eq:one}
L_3:=\left[\begin{array}{ccc}
1&0&0\\
0&1&0\\
0&0&1\end{array}\right]\nearrow
\left[\begin{array}{rrr}
0&1&0\\
1&-1&1\\
0&1&0\end{array}\right]\nearrow
\left[\begin{array}{ccc}
0&0&1\\
0&1&0\\
1&0&0\end{array}\right].\end{equation}
We get another ASHM by interchanging the top and bottom planes of $L_3$.
It is easy to verify that every $3\times 3\times 3$ PASHM is an ASHM. 
An example of an PASHM which is not an ASHM is
\begin{equation}\label{eq:ASHM}
\left[\begin{array}{r|r|r|r}
1&&& \\ \hline
&1&& \\ \hline
&&1& \\ \hline
&&& 1\end{array}\right]\nearrow
\left[\begin{array}{r|r|r|r}
&&1& \\ \hline
1&&-1&1 \\ \hline
&1&& \\ \hline
&&1& \end{array}\right]\nearrow
\left[\begin{array}{r|r|r|r}
&1&& \\ \hline
&&1& \\ \hline
&&&1 \\ \hline
1&&& \end{array}\right]\nearrow
\left[\begin{array}{r|r|r|r}
&&&1 \\ \hline
&&1& \\ \hline
1&&& \\ \hline
&1&& \end{array}\right].\end{equation}
}
\end{example} \endproof

 An $n\times n\times n$  permutation hypermatrix $A=[a_{ijk}]$ 
is ``equivalent'' to 
an $n\times n$ latin square $L_A=[l_{ij}]$. This equivalence results 
by setting $l_{ij}=k$ if and only if  $a_{ijk}=1$. 

 For example,
the $3\times 3\times 3$  permutation hypermatrix
\[\left[\begin{array}{ccc}
1&0&0\\
0&1&0\\
0&0&1\end{array}\right]\nearrow
\left[\begin{array}{ccc}
0&1&0\\
0&0&1\\
1&0&0\end{array}\right]\nearrow
\left[\begin{array}{ccc}
0&0&1\\
1&0&0\\
0&1&0\end{array}\right]\]
gives the latin square
\[\left[\begin{array}{ccc}
1&2&3\\ 3&1&2\\ 2&3&1\end{array}\right].\]

Another way to view this equivalence is as follows.    
Consider the $n\times n$ matrix $J_n$ of all 1's and a decomposition of $J_n$ into 
 $n$ permutation matrices
\[J_n=P_1+P_2+\cdots+P_n.\]
Then $P_1\nearrow P_2\nearrow\cdots\nearrow P_n$ is an $n\times n\times n$ permutation hypermatrix  and 
\[1P_1+2P_2+\cdots+nP_n\]
is a latin square; every $n\times n\times n$ permutation hypermatrix and  every $n\times n$ latin square arises in this way.

Let $A=[A_1,A_2,\ldots,A_n]$ be an $n\times n\times n$ PASHM where, therefore, $A_1,A_2,\ldots,A_n$ are $n\times n$ ASMs.
Since all vertical line sums of $A$ equal 1, it follows that
\[J_n=A_1+A_2+\cdots+A_n.\]
Let
\[L(A)=1A_1+2A_2+\cdots+nA_n.\]
Then in view of the above discussion, we call $L(A)$ a {\it PASHM-Latin square}, shortened
PASHM-LS. If $A$ is an ASHM, we call $L(A)$ an  {\it ASHM-Latin square}, shortened to  ASHM-LS. Thus $n\times n$ PASHM-LS's result from certain decompositions of $J_n$ into $n$\  $n\times n$ ASMs. Ordinary latin squares results
 in this way when $A$ is a  permutation hypermatrix, that is, when
$A_1,A_2,\ldots,A_n$ are $n\times n$ permutation matrices.
So $n\times n\times n$ ASHMs and PASHMs can also be regarded as  generalizations 
of $n\times n$ latin squares.

Problems concering hypermatrices tend to be more difficult than similar problems for matrices. As an example of this, consider the classical K\"{o}nig's minmax theorem for $(0,1)$-matrices (\cite{BR91}),  which  asserts that the term rank of a matrix $A$ (defined as the maximum number of nonzeros in $A$, no two in the same line) equals the minimum number of lines needed to cover all the nonzeros in $A$. The  term rank may be  found in polynomial time by computing a maximum matching in the corresponding bipartite graph. 
Now, for an  $n \times n \times n$ hypermatrix $A$, define the {\em term rank} of $A$ as the maximum number of nonzeros in $A$, no two of which are on a common line.

\begin{theorem}
 \label{thm:complexity}
 Finding the term rank of a hypermatrix is {\em NP}-hard.
\end{theorem}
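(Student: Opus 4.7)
The plan is to reformulate the term rank combinatorially and then reduce in polynomial time from partial Latin square completion, which is NP-complete by a classical theorem of Colbourn. Identifying an $n\times n\times n$ hypermatrix with its support $S\subseteq \{1,2,\ldots,n\}^3$, two nonzero entries at positions $p,q\in S$ lie on a common line of the hypermatrix precisely when $p$ and $q$ agree in exactly two coordinates. Consequently, the term rank equals the maximum cardinality of a subset $T\subseteq S$ in which every pair of triples agrees in at most one coordinate, that is, the largest partial Latin square contained in $S$ under the row/column/symbol reading of triples used in the paper. Since at most one triple of such a $T$ can occupy any fixed pair of coordinates, $|T|\le n^2$, with equality only when $T$ is a full Latin square of order $n$.

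Given an instance of partial Latin square completion, namely a partial Latin square $P\subseteq \{1,2,\ldots,n\}^3$, I would define
\[
S \;=\; \big\{(i,j,k)\in \{1,2,\ldots,n\}^3\setminus P : (i,j,k)\text{ shares at most one coordinate with every triple in }P\big\},
\]
the set of positions that can legally be added to $P$, and let $A$ be the $n\times n\times n$ hypermatrix whose nonzero positions are exactly those of $S$. The construction $P\mapsto A$ is polynomial-time in the size of $P$.

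The key claim is that the term rank of $A$ equals $n^2-|P|$ if and only if $P$ admits a completion to a Latin square of order $n$. In one direction, any completion $C$ of $P$ yields $C\setminus P\subseteq S$ with $|C\setminus P|=n^2-|P|$ and pairwise agreement in at most one coordinate, so the term rank of $A$ is at least $n^2-|P|$. In the other direction, if $T\subseteq S$ is a term-rank witness with $|T|\ge n^2-|P|$, then $T\cup P$ is a disjoint union in which every pair of triples agrees in at most one coordinate (within $T$ by optimality, within $P$ by hypothesis, and across $T$ and $P$ by the definition of $S$); the universal bound $|T\cup P|\le n^2$ forces $|T|=n^2-|P|$, and $T\cup P$ is then a Latin square extending $P$. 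The main point to verify is simply that $S$ has been defined so that the term-rank problem on $A$ mirrors the completion problem for $P$; once this is clear, NP-hardness of computing the term rank follows from that of partial Latin square completion.
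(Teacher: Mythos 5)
Your proof is correct, but it takes a genuinely different route from the paper. The paper reduces from the 3-dimensional matching problem (3D-MATCH): given $S\subseteq I\times J\times K$, it forms the $(0,1)$-hypermatrix supported on $S$ and identifies the maximum 3-dimensional matching with the term rank. You instead reduce from partial Latin square completion (Colbourn's theorem), after first observing that two distinct positions lie on a common line exactly when they agree in two coordinates, so that a term-rank witness is precisely a partial Latin square contained in the support. Your route is longer but arguably tighter on exactly this point: a 3-dimensional matching requires its triples to differ pairwise in \emph{all three} coordinates, whereas the term-rank condition only forbids agreement in two, so the paper's one-line identification of the two optima needs additional justification (the term rank of the hypermatrix built from a 3D-MATCH instance can exceed the maximum matching, e.g.\ for $S=\{(1,1,1),(1,2,2)\}$). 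Your reduction avoids this mismatch because the partial-Latin-square condition coincides verbatim with the term-rank condition, and the counting argument via the universal bound $|T\cup P|\le n^2$ closes both directions of the equivalence cleanly; the only cosmetic slip is attributing the pairwise condition within $T$ to ``optimality'' rather than to the definition of a term-rank witness. What the paper's approach buys, when made rigorous, is brevity and a reduction from the most standard hard problem in this setting; what yours buys is an exact combinatorial match between the two problems and, as a byproduct, the pleasant reformulation of the term rank as the largest partial Latin square contained in the support, which resonates with the Latin-square theme of Section \ref{sec:latin}.
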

\begin{proof}
 This follows from the fact that the 3-dimensional matching problem, denoted by 3D-MATCH, is {\em NP}-hard (\cite{GareyJohnson}). In  3D-MATCH there are given three disjoint sets $I$, $J$ and $K$, each of cardinality $n$, and a set $S \subseteq I \times J \times K$. A subset $M \subseteq S$ is a 3-dimensional matching if any two distinct elements $(i,j,k)$ and $(i',j',k')$ in $M$ satisfy $i\not = i'$, $j\not = j'$ and $k\not = k'$. Then  3D-MATCH asks for the largest size of a 3-dimensional matching. This problem reduces to finding the term rank in the $n \times n \times n$ hypermatrix $A=[a_{ijk}]$ defined by $a_{ijk}=1$ whenever $(i,j,k) \in S$.  Therefore, computing the term rank of a hypermatrix is {\em NP}-hard.
\end{proof}

\section{Diamond ASHMs}
 \label{sec:diamond}

 A {\it convex ASM} (called {\it dense} in \cite{BS16}) is defined to be an ASM having  the property that there are not any zeros between nonzeros  in both rows and columns. There is a family  ${\mathcal F}_n=\{F_n^1,F_n^2,\ldots,$ $F_n^n\}$ of $n\times n$ convex  ASMs where $F_n^k$ is defined in the following way: There are $+1$'s in the positions in the stripes of $F_n^k$ running from position $(1,k)$ to position $(k,1)$, from $(k,1)$ to position $(n,n-k+1)$, from $(n,n-k+1)$ to position $(n+1-k,n)$, and from $(n+1-k,n)$ back to position $(1,k)$. Each of the entries within the region bordered by these four stripes is nonzero and so is uniquely determined, and all positions outside of this region are zero. In particular, $F_n^1$ is the $n\times n$ identity matrix $I_n$ and $F_n^n$ is the permutation matrix $L_n$ with 1's on the back diagonal (running from position $(1,n)$ to $(n,1)$). For example,
\[
F_6^3=\left[\begin{array}{r|r|r|r|r|r}
&&1&&&\\ \hline
&1&-1&1&&\\ \hline
1&-1&1&-1&1&\\ \hline
&1&-1&1&-1&1\\ \hline
&&1&-1&1&\\ \hline
&&&1&&\end{array}\right].
\]
If $n$ is odd, then $F_n^{(n+1)/2}$ is the {\it diamond ASM} of odd order; if $n$ is even, then $F_n^{n/2}$ and $F_n^{(n+2)/2}$ are the {\it diamond ASMs} of even order.  The ASM $F_6^3$ above is a diamond ASM.

For a $(0, \pm 1)$-matrix $B$, let $\sigma_+(B)$ equal the number of its $+1$'s and $\sigma_-(B)$ the number of its $-1$'s. Then $\sigma(B)=\sigma_+(B)+\sigma_-(B)$ is the number of nonzeros of $B$.
If $2k\le n$, we have
\[
\sigma_+(F_n^k)=k(n-k+1) \;\mbox{ and } \; \sigma_-(F_n^k)=(k-1)(n-k),
\]
and hence
\[\sigma(F_n^k)=(n-k+1)k+(n-k)(k-1)=2k(n-k+1)-n.\]
The difference between the number of nonzeros of $F_n^{k+1}$ and those of  $F_n^{k}$ equals $2(n-2k)$ for $2k< n$.
 The difference between the number of $-1$'s of $F_n^{k+1}$ and those of $F_n^{k}$ equals $(n-2k)$ for $2k< n$.

We define an ASHM to be {\it convex} provided that there are not any zeros between nonzeros in any of its lines.  Some special $n\times n\times n$ convex ASHMs are the {\it diamond ASHMs} defined by
\[{\mathfrak D}_n=F_n^1\nearrow F_n^2\nearrow \cdots \nearrow F_n^n \;
\mbox{ and } \;
F_n^n\nearrow F_n^{n-1}\nearrow \cdots \nearrow F_n^1,\]
both denoted by ${\mathfrak D}_n$ for convenience. It is easy to check that 
${\mathfrak D}_n$  is indeed an ASHM. 

\begin{example}
\label{ex:diamond5}
{\rm
The diamond ASHM ${\mathfrak D}_5$ is given by
\[\left[\begin{array}{r|r|r|r|r}
1&&&&\\ \hline
&1&&&\\ \hline
&&1&&\\ \hline
&&&1&\\ \hline
&&&&1\end{array}\right]\nearrow
\left[\begin{array}{r|r|r|r|r}
&1&&&\\ \hline
1&-1&1&&\\ \hline
&1&-1&1&\\ \hline
&&1&-1&1\\ \hline
&&&1&\end{array}\right]\nearrow
\left[\begin{array}{r|r|r|r|r}
&&1&&\\ \hline
&1&-1&1&\\ \hline
1&-1&1&-1&1\\ \hline
&1&-1&1&\\ \hline
&&1&&\end{array}\right]\nearrow\]
\[\left[\begin{array}{r|r|r|r|r}
&&&1&\\ \hline
&&1&-1&1\\ \hline
&1&-1&1&\\ \hline
1&-1&1&&\\ \hline
&1&&&\end{array}\right]\nearrow
\left[\begin{array}{r|r|r|r|r}
&&&&1\\ \hline
&&&1&\\ \hline
&&1&&\\ \hline
&1&&&\\ \hline
1&&&&\end{array}\right].\]

}\end{example}  \endproof
\smallskip

 For each $k=1,2,\ldots,n$, the matrix $F_n^1+F_n^2+\cdots+F_n^k$ is an $n\times n$ $(0,1)$-matrix with exactly $k$ ones in each row and column; in particular, $F_n^1+F_n^2+\cdots+F_n^k$  has $n$ more ones than $F_n^1+F_n^2+\cdots+F_n^{k-1}$. In fact, this is a property shared by all $n\times n\times n$ ASHMs.

\begin{lemma}
\label{lem:shared}
Let $A=[A_1,A_2,\ldots, A_n]$ be an ASHM and, for each $k=1,2,\ldots,n$, let $A^{(k)}=A_1+A_2  +\cdots +A_k$. Then each $A^{(k)}$ is a $(0,1)$-matrix with $k$ ones in every row and column. In particular, $A^{(k+1)}$ has exactly $n$ more $1$'s than $A^{(k)}$ for $k=1,2,\ldots,n-1$.
\end{lemma}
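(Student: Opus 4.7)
The plan is to exploit two facts simultaneously: (a) each horizontal plane $A_\ell$ is an ASM, so its row and column sums are all $1$, and (b) each vertical line of the ASHM is an alternating sign sequence starting and ending with $+1$. The second fact will give that the partial column sums $A^{(k)}$ are $(0,1)$-matrices, while the first gives the correct row and column sums.

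First I would fix $i,j$ and look at the $(i,j)$-entry of $A^{(k)}$, which is the prefix sum
\[
A^{(k)}_{ij} = \sum_{\ell=1}^{k} a_{ij\ell}
\]
of the vertical line $A_{ij*}$. Since $A$ is an ASHM, the nonzeros of this line alternate beginning and ending with $+1$. An easy induction on $k$ (or a direct inspection, since the sign pattern of a prefix is $+1,0,+1,0,\ldots$ whenever truncated at any point) shows that every prefix sum of such an alternating sequence lies in $\{0,1\}$. Hence $A^{(k)}$ is a $(0,1)$-matrix for every $k$.

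Next I would compute the line sums of $A^{(k)}$. For any fixed row index $i$,
\[
\sum_{j=1}^{n} A^{(k)}_{ij} = \sum_{\ell=1}^{k}\sum_{j=1}^{n} a_{ij\ell} = \sum_{\ell=1}^{k} 1 = k,
\]
because row $i$ of the ASM $A_\ell$ sums to $1$. The same argument, swapping the roles of row and column, shows that each column of $A^{(k)}$ also sums to $k$. Combined with the fact that $A^{(k)}$ has entries in $\{0,1\}$, this means $A^{(k)}$ is a $(0,1)$-matrix with exactly $k$ ones in each row and column.

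Finally, the last sentence is immediate: $A^{(k+1)} - A^{(k)} = A_{k+1}$, and the total number of nonzero entries added is the sum of all entries of $A_{k+1}$, which equals $n$ (the sum of its $n$ row sums, each equal to $1$). Since both $A^{(k+1)}$ and $A^{(k)}$ are $(0,1)$-matrices and $A^{(k+1)} \ge A^{(k)}$ entrywise (because $A^{(k+1)}_{ij} \in \{0,1\}$ and it cannot be less than $A^{(k)}_{ij}$, as moving from prefix length $k$ to $k+1$ adds $a_{ij,k+1} \in \{-1,0,1\}$ and still lands in $\{0,1\}$), the difference in their numbers of ones is exactly $n$. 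There is no real obstacle here; the only subtle step is the $\{0,1\}$ claim for prefix sums of an alternating sign line, which is the crux and rests squarely on the definition of ASHM used in part (a) of the definition.
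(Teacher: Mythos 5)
Your proof follows essentially the same route as the paper's: the alternating property of the vertical lines forces every prefix sum to lie in $\{0,1\}$, and the ASM property of each plane gives line sums equal to $k$; the paper states the first step without elaboration, and your spelling it out is the right way to fill it in.

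One detail is wrong, though it does not sink the argument: the parenthetical claim that $A^{(k+1)}\ge A^{(k)}$ entrywise is false in general. For instance, in the ASHM $L_3$ of the paper's first example, $A^{(1)}$ has a $1$ in position $(2,2)$ while $a_{222}=-1$, so $A^{(2)}_{22}=0<1=A^{(1)}_{22}$. Fortunately you do not need monotonicity: since $A^{(k)}$ is a $(0,1)$-matrix, its number of ones equals the sum of its entries, which your line-sum computation already shows is $kn$; the difference $(k+1)n-kn=n$ follows immediately. I would simply delete the entrywise comparison and invoke the count $kn$ directly.
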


\begin{proof} 
Since $A$ is an ASHM, each $A^{(k)}$ is a $(0,1)$-matrix. Every line sum in $A_i$ is 1, and therefore every line sum in $A^{(k)}=A_1+A_2  +\cdots +A_k$ is $k$. So, $A^{(k)}$ has $kn$ ones, and the last statement follows.
\end{proof}

The number of nonzeros in ${\mathfrak D}_n$ is computed to be
\[\sum_{k=1}^n\left(2k(n+1-k)-n\right)=\frac{n(n^2+2)}{3}.\]
There are other ASHMs with the same number of nonzeros. For example, if $n=4$, the ASHM
\[\left[\begin{array}{r|r|r|r}
1&&&\\ \hline
&&1&\\ \hline
&1&&\\ \hline
&&&1\end{array}\right]\nearrow
\left[\begin{array}{r|r|r|r}
&&1&\\ \hline
&1&-1&1\\ \hline
1&-1&1&\\ \hline
&1&&\end{array}\right]\nearrow
\left[\begin{array}{r|r|r|r}
&1&&\\ \hline
1&-1&1&\\ \hline
&1&-1&1\\ \hline
&&1&\end{array}\right]\nearrow
\left[\begin{array}{r|r|r|r}
&&&1\\ \hline
&1&&\\ \hline
&&1&\\ \hline
1&&&\end{array}\right]\]
has the same number 24 of nonzeros as ${\mathfrak D}_4$.

In \cite{BKMS13} it was shown that the $n\times n$ diamond ASMs have the largest number of nonzeros among all the $n\times n$ ASMs. In the next theorem we show the corresponding property for ASHMs.
Recall that if $i\le \frac{n+1}{2}$, the number of nonzeros in the $i$th row (or column) of an $n\times n$ ASM is at most $2i-1$. Since an ASM remains an ASM when the order of its rows are reversed (row $i$ becomes row $(n+1-i)$ for $1\le i\le n$) a similar inequality holds for rows $n,n-1,\ldots,\frac{n+1}{2}$. Equality holds in all these inequalities for diamond ASMs. 

Consider an $n\times n\times n$ diamond ASHM ${\mathfrak D}_n=[F_n^1,F_n^2,\ldots,F_n^n]$. The ASMs $F_n^1,F_n^2,\ldots,F_n^n$ are the horizontal planes of ${\mathfrak D}_n$.
The column-vertical planes of ${\mathfrak D}_n$ are  also $F_n^1,F_n^2,\ldots,F_n^n$ as are the row-vertical planes. Thus, both give the diamond  ASHM ${\mathfrak D}_n$. This symmetry of ${\mathfrak D}_n$ is the hypermatrix analogue of the symmetry of the diamond ASMs.  The $i$th row of the $k$th horizontal plane $F_n^k$ is also the $k$th row of the $i$th row-vertical plane $F_n^i$ of ${\mathfrak D}_n$, and hence the number of its nonzeros  is $2\min\{i,k\}-1$. 

\begin{theorem}
\label{th:diamond}
The maximum number $m_n$ of nonzeros in an $n\times n\times n$  ASHM is given by
\begin{equation}\label{eq:number}m_n=\frac{n(n^2+2)}{3}\end{equation}
and this maximum is attained by the diamond ASHM ${\mathfrak D}_n$. In fact, we have the following:
\begin{itemize}
\item[\rm (i)] $F_n^k$ has the largest number of nonzeros among all $n\times  n$ 
ASMs $A$ such that there exist ASMs $A_1,\ldots,A_{k-1},A_{k+1},\ldots, A_n$ for which $[A_1,\ldots, A_{k-1},A, A_{k+1},\ldots,A_n]$  is an ASHM.
\item[\rm (ii)] More generally, the number of nonzeros in a row $($resp., column$)$ of such an $A$ is at most equal to the number of nonzeros in the corresponding row $($resp., column$)$ of $F_n^k$.
\end{itemize}
\end{theorem}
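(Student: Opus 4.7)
The plan is to prove (ii) first---a refined row-by-row upper bound matching the diamond plane $F_n^k$---and then to deduce (i) by summing over rows and the formula for $m_n$ by one further summation over $k$. For a fixed ASHM $A=[A_1,\ldots,A_n]$ and indices $i,k$, I combine two separate upper bounds on the number of nonzeros in row $i$ of $A_k$.

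The first is the standard ASM row bound recalled just before the theorem: any row $i$ of an $n\times n$ ASM has at most $2\min(i,n+1-i)-1$ nonzeros. The second, which is the new content, comes from Lemma~\ref{lem:shared}. Writing $A_k = A^{(k)} - A^{(k-1)}$ (with $A^{(0)}=0$), the lemma tells us that row $i$ of $A^{(j)}$ is a $(0,1)$-vector, say with support $T_j$ of size $j$. Hence if $P,N$ denote the positions of the $+1$'s and $-1$'s in row $i$ of $A_k$, then $P = T_k\setminus T_{k-1}$ and $N = T_{k-1}\setminus T_k$. The containments $N\subseteq T_{k-1}$ and $P\cap T_{k-1}=\emptyset$ force $|N|\le k-1$ and $|P|\le n-k+1$, and together with the row-sum identity $|P|-|N|=1$ these give $|N|\le \min(k-1,n-k)$. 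So the row has at most $2|N|+1 \le 2\min(k,n+1-k)-1$ nonzeros. Combining the two bounds yields $2\min(i,n+1-i,k,n+1-k)-1$, which is exactly the nonzero count in row $i$ of $F_n^k$. The argument for columns is identical, so (ii) is established.

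Statement (i) is immediate from (ii) by summing over $i$. For the formula for $m_n$, summing (i) over $k$ gives $\sigma(A) \le \sum_{k=1}^n \sigma(F_n^k)$. The closed form $\sigma(F_n^k) = 2k(n+1-k) - n$ from the preamble is symmetric under $k\leftrightarrow n+1-k$, so it is valid for every $k$, and a routine application of $\sum k = n(n+1)/2$ and $\sum k^2 = n(n+1)(2n+1)/6$ reduces the sum to $n(n^2+2)/3$. Equality is attained by $\mathfrak{D}_n$, which is already known to be an ASHM. The main technical step is the extraction of the $k$-dependent bound via Lemma~\ref{lem:shared}; once the partial-sum viewpoint $A_k = A^{(k)}-A^{(k-1)}$ is in place, the rest of the argument is essentially bookkeeping.
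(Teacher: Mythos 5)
Your proof is correct, and its overall architecture---establish the row-by-row bound (ii) first, then obtain (i) and the value of $m_n$ by summation over $i$ and then over $k$---is the same as the paper's. The one genuine difference is the mechanism behind the $k$-dependent half of the bound. The paper observes that row $i$ of the horizontal plane $A_k$ is simultaneously row $k$ of the $i$th row-vertical plane of $A$, which is itself an ASM, and then simply invokes the standard ASM row bound a second time in that transposed role, so that the whole proof is a two-line symmetry argument giving at most $2\min\{i,k\}-1$ nonzeros (with the reversed-index cases handled by the usual reflection). You instead extract the bound $2\min\{k,n+1-k\}-1$ from Lemma~\ref{lem:shared}, writing $A_k=A^{(k)}-A^{(k-1)}$ and comparing the positive and negative supports of the row against the $(0,1)$-supports $T_{k-1}$ and $T_k$ of sizes $k-1$ and $k$; this is correct, since $N\subseteq T_{k-1}$ gives $|N|\le k-1$ while $P\cap T_{k-1}=\emptyset$ gives $|P|\le n-k+1$ and hence $|N|=|P|-1\le n-k$. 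The two mechanisms are close cousins: the partial sums $A^{(k)}$ encode exactly the alternation of the vertical lines that makes the row-vertical planes ASMs, so your route re-derives the ASM row bound in the vertical direction from first principles rather than quoting it, at the cost of a little bookkeeping. Both arguments, as the paper remarks, genuinely require the ASHM rather than merely the PASHM hypothesis. One small detail you supply that the paper leaves implicit is the observation that $\sigma(F_n^k)=2k(n-k+1)-n$ is symmetric under $k\mapsto n+1-k$, so the formula stated in the preamble only for $2k\le n$ is valid for all $k$ and the final summation is legitimate.
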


\begin{proof} Let $A=[A_1,A_2,\ldots,A_n]$ be an $n\times n\times n$ ASHM. Then row $i$ of the ASM $A_k$ is also row $k$ of the $i$th row-vertical ASM plane of $A$. Hence the number of nonzeros in row $i$ of $A_k$ cannot exceed the minimum of the numbers of nonzeros possible in row $i$  and row $k$ of  $n\times n$ ASMs, that is, cannot exceed $2\min\{i,k\}-1$. But this is the number of nonzeros in row $i$ of the $k$th horizontal ASM plane of  the   diamond ASHM ${\mathfrak D}_n$. Since this is true for all $i$ and $k$,  the conclusions (i) and (ii) in the theorem now follow.
\end{proof}

We remark that this proof uses strongly that we have an ASHM, not just a PASHM.

\section{ASHM-Latin squares}
\label{sec:latin}

In this section we investigate ASHM-LS's and PASHM-LS's. 

\begin{example}\label{ex:new}{\rm 
The PASHM in (\ref{eq:ASHM}) gives the PASHM-LS
\[\left[\begin{array}{cccc}
1&3&2&4\\
2&1&5&2\\
4&2&1&3\\
3&4&2&1\end{array}\right].\]

 Consider the $4\times 4\times 4$ diamond ASHM ${\mathfrak D}_4=[F_4^1,F_4^2,F_4^3,F_4^4]$ given by
\[
\left[\begin{array}{rrrr}
1&0&0&0\\
0&1&0&0\\
0&0&1&0\\
0&0&0&1\end{array}\right]\nearrow
\left[\begin{array}{rrrr}
0&1&0&0\\
1&-1&1&0\\
0&1&-1&1\\
0&0&1&0\end{array}\right]\nearrow
\left[\begin{array}{rrrr}
0&0&1&0\\
0&1&-1&1\\
1&-1&1&0\\
0&1&0&0\end{array}\right]\nearrow
\left[\begin{array}{rrrr}
0&0&0&1\\
0&0&1&0\\
0&1&0&0\\
1&0&0&0\end{array}\right].\]
Then the corresponding ASHM-LS is
\[L({\mathfrak D}_4)=
\left[\begin{array}{cccc}
1&2&3&4\\
2&2&3&3\\
3&3&2&2\\
4&3&2&1\end{array}\right].\]
Note that the hypermatrix in this example has the property that viewed in each of the three directions  the same ASHM results and thus the same ASHM-LS results.
This is true for all diamond ASHMs. \endproof
}
\end{example}

\begin{lemma}
 \label{lem:one} 
 Let
$A=[a_{ijk}]=[A_1,A_2,\ldots,A_n]$ be an $n\times n\times n$ PASHM with corresponding $n\times n$ PASHM-LS\, $L(A)=1A_1+2A_2+\cdots+nA_n=[l_{ij}]$. Then
 the sum of the entries in each row and column of $L(A)$ equals
${n+1}\choose 2$.
If $A$ is an ASHM then, in addition,  the following hold:
\begin{itemize}
\item[\rm (i)] The set of entries of $L(A)$ is $\{1,2,\ldots,n\}$, and  the first and last rows and
columns are permutations of $1,2,\ldots,n$, 
\item[\rm (ii)] Let $i, j \le n$, and let
$a_{ijk}=\pm 1 \mbox{ for } k\in\{k_1,k_2,\ldots,k_p\}$, where $1\le p\le n$ and  $1\le k_1< k_2<\cdots< k_p\le n$ and  where $a_{ijk}=0$,
 otherwise. 
 \begin{itemize}
\item[\rm (iia)] If $l_{ij}=r$, then $k_1\le r$, and $|\{k:a_{ijk}\ne 0\}|\le 2r-1$; moreover, $k_1=r$ implies that $p=1$.
 In particular, if $l_{ij}=1$, then $a_{ij1}=1$ and $a_{ijk}=0$ for $1<k\le n$.
\item[\rm (iib)] If $l_{ij}=r$, then $|\{k:a_{ijk}\ne 0\}|\le 2(n-r)+1$.
In particular, if $l_{ij}=n$
then $a_{ijn}=1$ and $a_{ijk}=0$ for $1\le k<n$. 
\end{itemize}
\end{itemize}
\end{lemma}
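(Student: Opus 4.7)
My plan is to split the argument into a PASHM part (the row/column sum) and an ASHM part (assertions (i) and (ii)), where the whole ASHM argument rests on a single observation: for each pair $(i,j)$, the vertical line $A_{ij*}$ is an alternating $\pm1$ sequence beginning and ending with $+1$, so if its nonzero positions are $k_1<k_2<\cdots<k_p$ (with $p$ odd), then
\[
l_{ij}=\sum_{k=1}^{n}k\,a_{ijk}=k_1-k_2+k_3-\cdots+k_p.
\]
The rest of the proof is squeezing information out of this identity.

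For the PASHM part, since each $A_k$ is an ASM every row of $A_k$ sums to $1$, so for each row $i$ of $L(A)$,
\[
\sum_{j=1}^n l_{ij}=\sum_{k=1}^n k\sum_{j=1}^n a_{ijk}=\sum_{k=1}^n k=\binom{n+1}{2},
\]
and symmetrically for columns.

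For (i) under the ASHM hypothesis, I would first invoke the earlier remark that the first and last plane of each type of an ASHM is a permutation matrix (if the first horizontal plane $A_1=A_{**1}$ had a $-1$ in position $(i,j)$, the vertical line $A_{ij*}$ would begin with $-1$, contradicting the ASHM definition). Applied to the first row-vertical plane $A_1^{\rm rv}$, which is a permutation matrix, the $j$-th row of $A_1^{\rm rv}$ is a single $+1$ in some column $k(j)$, which gives $l_{1j}=k(j)$, a permutation of $\{1,\dots,n\}$; the same reasoning handles $l_{nj}$, $l_{i1}$, $l_{in}$. To show that every entry lies in $\{1,\dots,n\}$ I rewrite the display above in two ways,
\[
l_{ij}=k_1+\sum_{t=1}^{(p-1)/2}(k_{2t+1}-k_{2t})=k_p-\sum_{t=1}^{(p-1)/2}(k_{2t}-k_{2t-1}),
\]
so since each parenthesized term is $\ge 1$, we get $1\le k_1\le l_{ij}\le k_p\le n$.

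For (ii), the same two rewritings do the work. The first one gives $l_{ij}\ge k_1$, proving $k_1\le r$; it also yields $l_{ij}\ge k_1+(p-1)/2\ge (p+1)/2$, hence $p\le 2r-1$; and when $k_1=r$, equality forces the sum $\sum(k_{2t+1}-k_{2t})$ to vanish, which (since each summand is positive) is possible only when the sum is empty, i.e.\ $p=1$. Specialising to $r=1$ forces $k_1=1$ and $p=1$, giving the stated conclusion. Symmetrically, the second rewriting gives $l_{ij}\le k_p-(p-1)/2$, whence $p\le 2(n-r)+1$; and $r=n$ forces $k_p=n$ together with $p=1$. The only delicate point in the whole argument is making sure that the representation of $l_{ij}$ as an alternating sum uses $p$ odd and signs beginning with $+1$, which is guaranteed because $A_{ij*}$ is an ASM line of positive sum. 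After that, everything reduces to two applications of the trivial inequality $k_{t+1}-k_t\ge 1$.
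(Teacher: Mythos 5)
Your proof is correct and follows essentially the same route as the paper's: the row/column sum comes from the line-sum identity $\sum_j l_{ij}=\sum_k k\sum_j a_{ijk}=\binom{n+1}{2}$, and assertions (i), (iia), (iib) are all extracted from the two groupings $l_{ij}=k_1+\sum_t(k_{2t+1}-k_{2t})=k_p-\sum_t(k_{2t}-k_{2t-1})$ of the alternating sum along the vertical line, exactly as in equations (\ref{eq:better}) and (\ref{eq:better2}) of the paper. Your write-up is in fact slightly more explicit than the paper's in deducing (iia) and (iib), which the paper leaves as "follow easily."
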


\begin{proof} 
Consider some row $i$ of $L(A)$ (similar arguments work for a column). Since $A_1,A_2,\ldots,A_n$ are ASMs, row $i$ of each  $A_i$ sums to 1. Thus   the sum of the entries of $L(A)$ in row $i$ equals
\[\sum_{k=1}^n k\left(\sum_{j=1}^n a_{ijk}\right)=\sum_{k=1}^n k\cdot 1={{n+1}\choose 2}.\]

Now assume that $A$ is an ASHM. 
As in (ii),
let $i, j \le n$, and let
$a_{ijk}=\pm 1 \mbox{ for } k\in\{k_1,k_2,\ldots,k_p\}$, where $1\le p\le n$ and  $1\le k_1< k_2<\cdots< k_p\le n$ and  where $a_{ijk}=0$,
 otherwise.
Then $p$ is odd and $a_{ijk}=1$ for $k=k_1,k_3,\ldots,k_{p}$ and $a_{ijk}=-1$ for $k=k_2,k_4,\ldots,k_{p-1}$. Thus
\begin{equation}\label{eq:better}
l_{ij}=k_1+(-k_2+k_3)+(-k_4+k_5)+\cdots + (-k_{p-1}+k_p)\ge k_1+\frac{p-1}{2}\ge k_1\ge 1,\end{equation}
 and also
\begin{equation}\label{eq:better2}
l_{ij}=(k_1-k_2)+(k_3-k_4)+\cdots + (k_{p-2}-k_{p-1})+k_p\le k_p-\frac{p-1}{2}\le k_p\le n.\end{equation}
Thus $1\le l_{ij}\le n$ for each $i,j$.
That the first and last rows and
columns are permutations of $1,2,\ldots,n$ follows from the definitions of  an ASHM and ASHM-LS. This proves (i). Assertions (iia) and (iib) follow easily from equations (\ref{eq:better}) and (\ref{eq:better2}).
\end{proof}

The first example in Example \ref{ex:new} shows that assertion (iia) in Lemma \ref{lem:one} does not hold in general for PASHM-LS's.

By Lemma \ref{lem:one}, and as is the case for $n\times n$ LS's, the entries of an $n\times n$ ASHM-LS are $\{1,2,\ldots,n\}$ but, unlike for LS's,  repeats in a row or column are possible.
As with LS's, we can regard an $n\times n$ ASHM-LS $L$ as an $n\times n\times n$ $(0,1)$-hypermatrix
\begin{equation}\label{eq:dec1}
H_L: Q_1\nearrow Q_2\nearrow\cdots\nearrow Q_n\end{equation}
where an entry $k$ in the $(i,j)$-position of $L$ becomes a $1$ in the $(i,j,k)$-position of $H_L$ giving $n\times n$ $(0,1)$-matrices $Q_1,Q_2,\ldots, Q_n$.
Since the entries of $H_L$ are $\{1,2,\ldots,n\}$, it follows that
\begin{equation}\label{eq:dec2}
Q_1+Q_2+\cdots+Q_n=J_n,\end{equation} a decomposition of $J_n$ into $n\times n$ $(0,1)$-matrices. We call  (\ref{eq:dec2}) (and  (\ref{eq:dec1})) the {\it $(0,1)$-decomposition}
of the ASHM-LS $L$. 

In summary, with the above notation, for the $n\times n\times n$ ASHM $A=[A_1,A_2,\ldots,A_n]$ with corresponding latin square specified by $L=[Q_1, Q_2,\ldots,Q_n]$, we have the two decompositions of $J_n$ given by
\[J_n=Q_1+Q_2+\cdots+Q_n=A_1+A_2+\cdots+A_n.\]

\begin{example}\label{ex:six}{\rm
The $4\times 4$ ASHM-LS $L$ in Example \ref{ex:new}
gives
\[H_L=
\left[\begin{array}{cccc}
1&0&0&0\\  
0&0&0&0\\  
0&0&0&0\\  
0&0&0&1\end{array}\right]\nearrow
\left[\begin{array}{cccc}
0&1&0&0\\  
1&1&0&0\\  
0&0&1&1\\  
0&0&1&0\end{array}\right]\nearrow
\left[\begin{array}{cccc}
0&0&1&0\\  
0&0&1&1\\  
1&1&0&0\\  
0&1&0&0\end{array}\right]\nearrow
\left[\begin{array}{cccc}
0&0&0&1\\  
0&0&0&0\\  
0&0&0&0\\ 
1&0&0&0\end{array}\right].\]

}\end{example} \endproof

The next theorem asserts that an ASHM-LS which is an ordinary LS can only arise in the classical way.

\begin{theorem}\label{th:LS}
Let $A=[a_{ijk}]$ be an $n\times n\times n$ ASHM. Suppose the ASHM-LS $L(A)$ is  a latin square. Then $A$ is a permutation hypermatrix.
\end{theorem}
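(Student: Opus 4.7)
The plan is to prove by induction on $r = 1, 2, \ldots, n$ that the horizontal plane $A_r$ equals the permutation matrix $P_r$ whose support is $\{(i,j) : l_{ij} = r\}$. Since $L(A)$ is a Latin square, each $P_r$ is a genuine $n \times n$ permutation matrix; once the equality $A_r = P_r$ is established for all $r$, the hypermatrix $A$ has no $-1$'s and is therefore a permutation hypermatrix.

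For the base case $r = 1$, part (iia) of Lemma~\ref{lem:one} directly forces $a_{ij1} = 1$ at every $(i,j) \in P_1$ and $a_{ijk} = 0$ for $k > 1$ at these positions. Moreover, $A_1$ can contain no $-1$: the first entry of any vertical line, being the first entry of an ASM line that alternates starting with $+1$, is either $0$ or $+1$. Thus $A_1$ is a nonnegative ASM with row sums $1$ whose support contains the permutation pattern $P_1$, so $A_1 = P_1$.

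For the inductive step, assume $A_s = P_s$ for $s = 1, \ldots, r - 1$. I first establish that for every $(i,j)$ with $l_{ij} \le r$ the vertical line at $(i,j)$ has exactly one nonzero, namely $a_{ij,\, l_{ij}} = 1$. If $l_{ij} = s < r$, the inductive hypothesis gives $a_{ijs} = 1$, and (iia) bounds the first vertical-line index by $k_1 \le s$; the case $k_1 < s$ is impossible, since it would place $(i,j)$ in $P_{k_1}$ by the inductive hypothesis, contradicting $l_{ij} = s$. Thus $k_1 = s$, and the last clause of (iia) forces $p = 1$. If instead $l_{ij} = r$, the inductive hypothesis gives $a_{ijs} = 0$ for every $s < r$, so $k_1 \ge r$; combined with $k_1 \le r$ from (iia), this yields $k_1 = r$ and $p = 1$. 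Consequently $a_{ijr} = 1$ on $P_r$, and $a_{ijr} = 0$ whenever $l_{ij} < r$.

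The step I expect to be delicate is ruling out $-1$ entries of $A_r$ at positions with $l_{ij} > r$, since (iia) does not directly control these. Here I invoke Lemma~\ref{lem:shared}: at such a position the inductive hypothesis again gives $a_{ijs} = 0$ for all $s < r$, so $(A_1 + A_2 + \cdots + A_r)_{ij} = a_{ijr}$; since this partial sum is a $(0,1)$-matrix, $a_{ijr} \in \{0, 1\}$. Combining the three cases, $A_r$ is a nonnegative ASM with row sums $1$, hence a permutation matrix, and since it contains the permutation matrix $P_r$ in its support, it must equal $P_r$. This closes the induction and yields the theorem.
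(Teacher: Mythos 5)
Your proof is correct and follows essentially the same route as the paper's: an induction on the layer index $r$, using Lemma~\ref{lem:one}(iia) (i.e., equation~(\ref{eq:better})) to force each vertical line with $l_{ij}=r$ to have its single nonzero, a $+1$, in layer $r$. You are in fact somewhat more careful than the paper's terse ``proceeding inductively'' in explicitly ruling out $-1$'s of $A_r$ at positions with $l_{ij}>r$ via Lemma~\ref{lem:shared}, which is a legitimate and welcome filling-in of detail rather than a different argument.
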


\begin{proof} Let $A=[A_1,A_2,\ldots,A_n]$.
Since $L(A)=[l_{ij}]$ is a latin square,  it follows that for each $r=1,2,\ldots,n$, the positions of the $n$ $r$'s in $L(A)$  are those occupied by 1's in an $n\times n$ permutation matrix $P_r$.
From equation (\ref{eq:better}) in the proof of Lemma \ref{lem:one} we conclude that the number of nonzeros in each  column $C_{ij}$ of positions
$\{(i,j,k):1\le k\le n\}$ with $l_{ij}=1$ equals one. It also follows from (\ref{eq:better})  that $(i,j,k)\in C_{ij}$ implies that $k=1$. Thus $A_1$ is an $n\times n$ permutation matrix whose 1's are in those positions occupied by 1's in $L(A)$, and $A_2,\ldots,A_n$ have 0's in those positions. Since in $L(A)$, the positions occupied by $2$'s are zeros in $A_1$, we can use a similar argument to show that  $A_2$ is an $n\times n$  permutation matrix whose 1's
are in those positions occupied by 2's in $L(A)$, and $A_3,\ldots,A_n$ have 0's in those positions.
Proceeding inductively, we conclude that $A$ is a permutation hypermatrix.
\end{proof}

\medskip
Recall that,  for  vectors $x=(x_1, x_2, \ldots, x_n)$ and 
$y=(y_1, y_2, \ldots, y_n)$,  $x$ is {\em majorized} by $y$, written $x \preceq y$, whenever $\sum_{j=1}^k x_{[j]} \le \sum_{j=1}^k y_{[j]}$ ($k \le n$) with equality  for $k=n$. Here $x_{[j]}$ denotes the $j$th largest component in $x$. We now define a new order for matrices based on majorization. Let $A$ and $B$ be $m \times n$ matrices. We say that $A$ is {\em line majorized} by $B$ if each line (row or column) of $A$ is majorized by the corresponding line in $B$, and then we write $A \preceq_l B$. This is a preorder on the class of $m \times n$ matrices. 
Let $\Row_i(A)$ denote the $i$th row of a matrix $A$.

\begin{theorem}
 \label{thm:ASHM-majorization}
  Let $A$ be an $n\times n\times n$ ASHM and $P$ an $n\times n\times n$ permutation ASHM. Then
  \begin{equation}
   \label{eq:l(A)-maj}
       L(A) \preceq_l L(P),
  \end{equation}
that is, each row of the ASHM-LS $L(A)$ is majorized by $z_n=(n,n-1,\ldots,1)$.
\end{theorem}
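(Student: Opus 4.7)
The first move is to reduce $L(A)\preceq_l L(P)$ to a concrete uniform inequality. Since $L(P)$ is an ordinary Latin square (by the classical correspondence recalled at the start of Section \ref{sec:latin}), every row and every column of $L(P)$ is a permutation of $\{1,\ldots,n\}$ whose decreasing rearrangement equals $z_n=(n,n-1,\ldots,1)$. So the task is to show that for any row index $i$ and any subset $S\subseteq\{1,\ldots,n\}$ of size $k$,
\begin{equation}\label{eq:plan-bd}
\sum_{j\in S} l_{ij}\ \le\ n+(n-1)+\cdots+(n-k+1)\ =\ kn-\binom{k}{2},
\end{equation}
together with the analogous bound for column subsets; the $k=n$ case is already an equality by Lemma~\ref{lem:one}.

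The key tool will be the family of cumulative matrices $A^{(t)}=A_1+A_2+\cdots+A_t$. By Lemma~\ref{lem:shared}, each $A^{(t)}$ is a $(0,1)$-matrix with exactly $t$ ones in every row and in every column, and $A^{(n)}=J_n$. Applying Abel summation to $l_{ij}=\sum_{t=1}^{n}t\,a_{ijt}=\sum_{t=1}^{n}t\bigl(A^{(t)}_{ij}-A^{(t-1)}_{ij}\bigr)$ (with $A^{(0)}=0$) telescopes to the identity
\[
l_{ij}\ =\ n\,A^{(n)}_{ij}-\sum_{t=1}^{n-1}A^{(t)}_{ij}\ =\ n-\sum_{t=1}^{n-1}A^{(t)}_{ij}.
\]
I expect this rewriting to be the main obstacle in the sense that it is the one non-routine step: it replaces a $\pm 1$-weighted sum by a sum of non-negative $(0,1)$-entries coming from matrices with uniform row and column sums, which is exactly the setting where majorization-type bounds collapse to pigeonhole counting.

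Given this identity, the estimate \eqref{eq:plan-bd} reduces to showing $\sum_{t=1}^{n-1}\sum_{j\in S}A^{(t)}_{ij}\ge\binom{k}{2}$. Row $i$ of $A^{(t)}$ contains exactly $t$ ones distributed among $n$ columns, and only $n-k$ columns lie outside $S$, so at least $\max(0,\,t-(n-k))$ of those $t$ ones must fall inside $S$. Summing these lower bounds over $t=n-k+1,\ldots,n-1$ gives $1+2+\cdots+(k-1)=\binom{k}{2}$, establishing \eqref{eq:plan-bd}. Fixing a column $j$ instead of a row $i$ and invoking the column half of Lemma~\ref{lem:shared} yields the column inequalities by the identical argument, so $L(A)\preceq_l L(P)$.
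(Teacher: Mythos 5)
Your proof is correct. It is close in spirit to the paper's argument but packaged differently: the paper works with the $i$th row-vertical plane $C$ (an ASM), writes $\Row_i(L(A))=z_n^TC$, decomposes $z_n=\sum_k e^{(k)}$ so that the row becomes a sum of $(0,1)$-vectors $v^{(k)}$ with $k$ ones each, and then invokes the easy direction of the Gale--Ryser theorem to get majorization by the conjugate partition $z_n$. You instead Abel-sum against the cumulative horizontal planes $A^{(t)}$ of Lemma~\ref{lem:shared} to get the identity $l_{ij}=n-\sum_{t=1}^{n-1}A^{(t)}_{ij}$, reduce majorization to the uniform bound $\sum_{j\in S}l_{ij}\le kn-\binom{k}{2}$, and prove the needed inequality $\sum_{j\in S}A^{(t)}_{ij}\ge\max(0,t-(n-k))$ by pigeonhole. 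The two computations are essentially dual (your $A^{(n-k)}$ is the complement of the paper's $v^{(k)}$ restricted to row $i$), but your route is self-contained and more elementary, since the pigeonhole count is exactly the content of the Gale--Ryser direction the paper cites; the paper's version buys brevity by outsourcing that step and makes visible the role of the row-vertical plane being an ASM, which is where the full ASHM hypothesis (not just PASHM) enters. In your version that hypothesis enters equivalently through Lemma~\ref{lem:shared}, which requires the vertical lines to be alternating. Both arguments handle columns symmetrically.
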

\begin{proof}
First note that each row of $L(P)$ is a permutation of  $z_n=(n, n-1,  \ldots, 1)$ (recall, $n$-tuples are identified with column vectors).
 Let $i \le n$. Let $C$ be the $i$th row-vertical-plane of $A$ so that $C$ is an ASM.  The $i$th row in $L(A)$ satisfies 
\[
    \Row_i(L(A))=z_n^T C.
\]
(where we view the matrix $C$ so the layers of $A$ are organized downwards). 
Let $B=C^T$, so $B$ is also an ASM, and define $w=Bz_n=(z_n^TC)^T$. We  need to show the majorization
 \[
       (*)\;\; \;\; w \preceq z_n.
\]
 Let $e^{(k)}=e_1+e_2+ \cdots + e_k$  ($k \le n$) where $e_j$ is the $j$th unit vector. So 
\[
    z_n=\sum_{k=1}^n e^{(k)}.
\]
 Then
\begin{equation}
 \label{eq:sum-vk}
   w=Bz_n=B\sum_{k=1}^n e^{(k)}= \sum_{k=1}^n Be^{(k)}=\sum_{k=1}^n v^{(k)}
\end{equation}
where $v^{(k)}=Be^{(k)}$  ($k \le n$). Then $v^{(k)}$ is the sum of the first $k$ columns of  $B$, so $v^{(k)}$ is a $(0,1)$-vector, as $B$ is an ASM. Moreover, $v^{(k)}$ has $k$ ones.
Let $V$ be the $n \times n$ matrix whose columns are $v^{(1)}, v^{(2)}, \ldots, v^{(n)}$. Then $V$ is a $(0,1)$-matrix with column sum vector $(1,2, \ldots, n)$ and row sum vector $w$, by (\ref{eq:sum-vk}). We may therefore apply the (simple part of the) Gale-Ryser theorem, and conclude that $w$ is majorized by the conjugate $(1, 2, \ldots, n)^*$ of the column sum vector $(1, 2, \ldots, n)$. But $(1, 2, \ldots, n)^*=z_n$, so we have shown that $w \preceq z_n$, and hence the  majorization in $(*)$ holds. Similarly, one shows the majorization for columns of $L(A)$ and $L(P)$. 
\end{proof}

Consider again the diamond ASHM ${\mathfrak D}_n$. The matrix $L({\mathfrak D}_n)$ remains the same if its rows and also its columns are simultaneously reversed. Moreover, when 
$i\le \lfloor n/2\rfloor$, then its  $i$th row  is 
\[
    \Row_i(L({\mathfrak D}_n))=v^{(i)}:=(i,i, \ldots, i, i+1,i+2, \ldots, n-i+1, n-i+1, \ldots, n-i+1)
\]
where the first (resp. last) $i$ entries are equal.

\begin{example}
\label{ex:diamond7}
{\rm
Let $n=7$. Then 

\[
L({\mathfrak D}_7)=
\left[
\begin{array}{ccccccc}
     1  &   2  &   3  &   4  &   5  &   6  &   7 \\
     2  &   2  &  3   &  4   &  5   &  6  &   6\\
     3   &  3   &  3  &   4  &   5  &   5   &  5\\
     4   &  4  &   4  &   4   &  4  &   4  &   4\\
     5  &   5   &  5  &   4  &   3  &   3  &   3\\
     6   &  6   &  5  &   4  &   3  &   2  &   2\\
     7  &   6  &   5  &   4  &   3  &   2  &   1
\end{array}
\right].
\]
Consider  an ASHM $A$, whose first three horizontal planes coincide with those of ${\mathfrak D}_7$ while the remaining planes are suitable permutation matrices $P_4,P_5,P_6, P_7$. The sum of these three horizontal planes is
\[S=\left[\begin{array}{ccccccc}
1&1&1&0&0&0&0\\
1&1&0&1&0&0&0\\
1&0&1&0&1&0&0\\
0&1&0&1&0&1&0\\
0&0&1&0&1&0&1\\
0&0&0&1&0&1&1\\
0&0&0&0&1&1&1\end{array}\right].\]
Then $P_4+P_5+P_6+P_7=J_7-S$. By specifically  choosing  $P_4$ to be the permutation matrix
\[P_4=\left[\begin{array}{c|c|c|c|c|c|c}
&&&&&&1\\ \hline
&&1&&&&\\ \hline
&1&&&&&\\ \hline
1&&&&&&\\ \hline
&&&&&1&\\ \hline
&&&&1&&\\ \hline
&&&1&&&\end{array}\right],\]
we can obtain
\begin{equation}\label{eq:n=7}
L(A)=
\left[
\begin{array}{ccccccc}
     1  &   2  &   3 &    7   &  5   &  6  &   4\\
     2  &   2   &  3  &   3   &  6   &  7  &   5\\
     3   &  3  &   2 &    5   &  3   &  5   &  7\\
     4  &   3  &   4  &   2   &  6   &  3  &   6\\
     6  &   7  &   3  &   4   &  2   &  3  &   3\\
     7  &   5  &   6  &   3   &  3   &  2  &   2\\
     5  &   6  &   7  &   4   &  3   &  2  &   1
\end{array}
\right].
\end{equation}
For instance, concerning row four in $L(A)$  we have the majorization 
\[
  (4,  3,  4,  2,   6,  3,  6) \preceq z_7=(7,6,5,4,3,2,1)
\]
which is in accordance with Theorem \ref{thm:ASHM-majorization}. In this example, we can also check that every row or column of $L(A)$ majorizes the corresponding row or column of $L({\mathfrak D}_7)$, so $L({\mathfrak D}_7)  \preceq_l L(A)$. 
\endproof
} 
\end{example} 

In an $n\times n$  LS, each integer in $\{1,2,\ldots,n\}$ occurs exactly $n$ times. In an $n\times n$ ASHM-LS, the entries are also taken from $\{1,2,\ldots,n\}$, but their multiplicity can vary. In the $7\times 7$ ASHM-LS in (\ref{eq:n=7}), the integer $3$ occurs 14 times. This example can be generalized to all odd $n\ge 5$ giving an $n\times n$ ASHM-LS in which $(n-1)/2$ occurs $2n$ times.

In view of Example \ref{ex:diamond7} one may ask if $L({\mathfrak D}_n)  \preceq_l L(A)$ holds for every ASHM $A$ of size $n \times n \times n$.  As shown by the next example, this is not the case,

\begin{example}
\label{ex:diamond5more}
{\rm
Consider the ASHM $A'$ given by 
\[\left[\begin{array}{r|r|r|r|r}
1&&&&\\ \hline
&1&&&\\ \hline
&&&1&\\ \hline
&&1&&\\ \hline
&&&&1\end{array}\right]\nearrow
\left[\begin{array}{r|r|r|r|r}
&&1&&\\ \hline
&&&1&\\ \hline
1&&&-1&1\\ \hline
&1&-1&1&\\ \hline
&&1&&\end{array}\right]\nearrow
\left[\begin{array}{r|r|r|r|r}
&1&&&\\ \hline
&&1&&\\ \hline
&&&1&\\ \hline
&&&&1\\ \hline
1&&&&\end{array}\right]\nearrow\]
\[\left[\begin{array}{r|r|r|r|r}
&&&&1\\ \hline
1&&&&\\ \hline
&1&&&\\ \hline
&&1&&\\ \hline
&&&1&\end{array}\right]\nearrow
\left[\begin{array}{r|r|r|r|r}
&&&1&\\ \hline
&&&&1\\ \hline
&&1&&\\ \hline
1&&&&\\ \hline
&1&&&\end{array}\right].\]
\smallskip
Let $A$ be the ASHM obtained from $A'$ by letting the horizontal planes of $A'$ become the row-vertical planes of $A$. Let $x$ denote the  second row of the ASHM-LS $L(A)$ associated with $A$. Then $x$ is computed from the second matrix above (the only one with some negative entries) and we get 
\[
   x=(3,2,4,3,3).
\]
Consider the diamond ${\mathfrak D}_5$.  Then (see above) the second row of $L({\mathfrak D}_5)$ is $(2,2,3,4,4)$. But $(2,2,3,4,4) \not \preceq (3,2,4,3,3)$ because $4+4>4+3$. \endproof
}
\end{example}  

We say that a line in a matrix is {\em constant} if all its entries are equal. 
Let $A=[A_1, A_2,\ldots,A_n]$ be an $n \times n \times n$ ASHM and let $L=L(A)$ be its   ASHM-LS. Assume that  $L$ has a constant line. Then $n$ must be odd. In fact,  by Lemma \ref{lem:one}, the sum of the entries in any line of $L$ is $n(n+1)/2$, so in the line which is constant each entry must be equal to $(n+1)/2$. But every entry in $L$ is integral, so $n$ must be odd. 

For instance, for the  diamond ASHM ${\mathfrak D}_n$, where $n$ is odd, say $n=2k-1$, the $k$'th row and column of the ASHM-LS  $L({\mathfrak D}_n)$ are constant lines. The case of $n=7$ is  shown in Example \ref{ex:diamond7}. 

\begin{example}
\label{ex:constant-line}
{\rm
Consider the ASM $B$ be given by 
\[
B=
\left[\begin{array}{r|r|r|r|r|r|r}
&&&&1&\\ \hline
&1&&& & \\ \hline
&&1&&-1&1\\ \hline
1&-1&&1&&-1&1\\ \hline
&&&&&1\\ \hline
&1&-1&&1&\\ \hline
&&1&&&\end{array}
\right].
\]
Then $z_7^TB=(4,4,4,4,4,4,4)$. It is possible to construct an ASHM $A$ such that $B$ is the third row-vertical plane; this follows from results we establish in the next section. Thus, the third row of the corresponding ASHM-LS $L(A)$ is $(4,4, \ldots, 4)$. Note that this vector is minimal in the majorization order in $\mb{R}^7$. On the other hand, the first row of $L(A)$ is a permutation of $(1,2, \ldots, 7)$ which, by Theorem \ref{thm:ASHM-majorization}, is maximal in the majorization order among all possible rows of an ASHM-LS of order $7$. \endproof
}
\end{example}   

The ASHM-LS of a permutation hypermatrix does  not contain any  constant line, except in the trivial case of $n=1$. For $n=3$, the ASHM-LS $L({\mathfrak D}_3)$ has a constant line, both a row and a column. An ASHM  where every line  contains  at most one $-1$ will be called a {\em near-permutation hypermatrix}. We now show that near-permutation ASHMs have this same property as permutation ASHMs, namely the corresponding ASHM-LS's do not have any constant line (when $n>3$).

\begin{theorem}
 \label{thm:constant_not}
 Let $A=[A_1, A_2, \ldots, A_n]$ be an $n \times n \times n$ near-permutation ASHM where $n>3$. Then the  ASHM-LS $L(A)$ does not have any constant lines. 
\end{theorem}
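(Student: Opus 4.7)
The plan is to argue by contradiction, assuming a line of $L(A)$ is constant and deriving $n\le 3$. By the row/column symmetry of $L(A)$, suppose row $i$ is constant; the paragraph preceding the theorem forces $n$ odd and the common value $r:=(n+1)/2$. Let $B:=A_i^{\rm rv}$. Then $B$ is an $n\times n$ ASM whose every row $j$ satisfies $\sum_{k}k\,B_{jk}=r$, and since each line of $B$ is a line of $A$, the near-permutation hypothesis gives that each row and column of $B$ carries at most one $-1$.

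Combining ASM alternation with the at-most-one-$-1$ bound, every row of $B$ has exactly one nonzero or exactly three nonzeros in the pattern $+1,-1,+1$. The weighted-sum identity then classifies the rows: a one-nonzero row must have its $+1$ in column $r$ (a \emph{unit row}), while a three-nonzero row has entries at columns $k_1<k_2<k_3$ with $k_1-k_2+k_3=r$ (a \emph{triple row}). A triple row cannot have $k_1=r$ or $k_3=r$, since either choice forces $k_1=k_2$ or $k_2=k_3$; hence only unit rows and triple rows with $k_2=r$ contribute to column $r$. Column sum $1$ together with at most one $-1$ in column $r$ then force there to be at most $2$ unit rows and at most $1$ triple row with $k_2=r$. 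The standard ASM fact that the topmost (respectively bottommost) nonzero in every column must be a $+1$ shows that the first and last rows of $B$ are both unit rows, so there are exactly $2$ unit rows, both with $+1$ in column $r$, sitting in positions $1$ and $n$. Denote by $R_r$ the unique triple row with $k_2=r$.

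The main step, and the only delicate one, is to locate $R_r$ in two incompatible positions. For $n>3$, row $2$ of $B$ is not a unit row (positions $1$ and $n$ are already taken), hence a triple row; let $m$ be the column of its $-1$. If $m\ne r$, then row $1$ (unit with $+1$ only in column $r$) contributes $0$ to column $m$, so the topmost nonzero in column $m$ would be that very $-1$, violating column alternation. Hence $m=r$ and row $2=R_r$. The symmetric argument from below---row $n$ is a unit row contributing $+1$ only to column $r$---forces row $n-1=R_r$. Uniqueness of $R_r$ then gives $2=n-1$, so $n=3$, contradicting $n>3$. A constant column of $L(A)$ is handled by the identical argument with the column-vertical plane $A_{*j*}$ in place of $A_{i**}$.

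The main obstacle I expect is getting the structural setup right---the row dichotomy and the unit first/last row fact for ASMs---after which the two-sided column-alternation step delivers the contradiction in essentially one line.
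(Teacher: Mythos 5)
Your proof is correct and follows essentially the same route as the paper's: both work in the row-vertical plane $B$, use the constant weighted sum to force the first and last lines of $B$ to be unit vectors with their $1$ in the middle position $r$, deduce that the adjacent lines must each place their unique $-1$ in position $r$, and derive a contradiction with the near-permutation bound on the middle line. Your extra scaffolding (the unit/triple row classification and the count of exactly two unit rows) is sound but not needed beyond what the paper's shorter argument already extracts.
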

\begin{proof}
 Assume $L=L(A)$ has a constant line, say row $i$. As explained above,  $n$ must be odd, so  $n=2k-1$ with $k\ge 3$, and every entry in row $i$ of $L$ must be $(n+1)/2=k$. Let $B=[b_{ij}]$ be the row-vertical plane of $A$ associated with row $i$ of $L$; this row equals $z_n^TB$ and all entries are $k$. Then $B$  cannot be a permutation matrix, because then row $i$ of  $L$ is a permutation of $(1,2, \ldots, n)$. So,  some line in $B$ has a negative entry. On the other hand, no line in $B$ can have more than one negative entry, as $A$ is a near-permutation ASHM. The first column of $B$ is a unit vector, so its 1 must be in row $k$, i.e.,  $b_{k1}=1$. This implies that the  second column cannot be a unit vector, because then its 1 would be in another row than the  $k$th, and then $z$ times that column would be different from $k$. So, the second column of $B$ contains a $-1$, which then must be in row $k$,  again by the alternating property of $B$. By similar arguments, $b_{kn}=1$ and $b_{k,n-1}=-1$. However, as $n>3$, this means that row $k$ in $B$ has two $-1$'s; a contradiction. This proves that no row in $L$ is constant, and similar arguments show that no column is constant. 
\end{proof}

For ASHMs with more than one negative entry in  some lines, the situation is more complicated. However, the construction in the previous proof gives the following property.

\begin{corollary}
 \label{cor:constant_not}
 Let $A=[A_1, A_2, \ldots, A_n]$ be an $n \times n \times n$ ASHM and let $L=L(A)$ be its   ASHM-LS. If $n>3$, then the second $($resp. second-last$)$ row or column in $L$ is not constant.  
\end{corollary}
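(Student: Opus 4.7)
My plan is to mimic the proof of Theorem \ref{thm:constant_not} word for word, replacing the near-permutation hypothesis used there with a local structural fact: in any ASM, row $2$ (and by symmetry row $n-1$, column $2$, column $n-1$) contains at most three nonzero entries. Because we are now assuming that a second (or second-last) row or column of $L=L(A)$ is constant, every row and every column of the relevant row- or column-vertical plane of $A$ coincides with a ``row $2$'' (or ``row $n-1$'', etc.) of some ASM plane of $A$, so this small restriction is exactly the ingredient that replaces the near-permutation hypothesis in the cascade of the original proof.

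The auxiliary fact is easy: since row $1$ of any ASM is a unit vector, the partial-column-sum condition $0\le \sum_{r\le 2}b_{r,j}\le 1$ forces row $2$ of any ASM to be either a single $+1$ or the pattern $+1,-1,+1$ with the $-1$ in the column of row $1$'s $+1$; in particular row $2$ has at most three nonzeros. Now assume row $2$ of $L$ is constant. As in Theorem \ref{thm:constant_not}, $n=2k-1$ is odd, every entry of that row equals $k=(n+1)/2$, the row-vertical plane $B=A_2^{rv}$ satisfies $z_n^T B=(k,\ldots,k)$, and because $A_1^{cv},A_n^{cv},A_1^{h},A_n^{h}$ are permutation matrices the first and last rows and columns of $B$ are unit vectors, with the $1$'s in the first and last columns both forced into row $k$. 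Since $i=2$, each row of $B$ is row $2$ of the ASM $A_k^{h}$ and each column of $B$ is row $2$ of the ASM $A_j^{cv}$, so every row and every column of $B$ has at most three nonzeros; in particular row $k$ of $B$, already bearing $+1$'s at columns $1$ and $n$, must have the form $(1,0,\ldots,0,-1,0,\ldots,0,1)$ with its unique $-1$ in some column $m$, $1<m<n$.

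From here the cascade of Theorem \ref{thm:constant_not} applies verbatim: column $2$ of $B$ cannot be a unit vector (that would force $b_{k,2}=1$ and two consecutive $+1$'s in row $k$), so it has exactly three nonzeros $+1,-1,+1$ at rows $\ell_1<\ell_2<\ell_3$; if $\ell_2\ne k$ then $b_{\ell_2,1}=0$ and $b_{\ell_2,2}=-1$, so row $\ell_2$ of $B$ begins with a $-1$, violating the ASM property. Hence $\ell_2=k$ and $b_{k,2}=-1$, forcing $m=2$; the symmetric argument on column $n-1$ forces $m=n-1$, a contradiction for $n>3$. The case of column $2$ of $L$ being constant is handled by the same cascade applied to $M=A_2^{cv}$, whose rows (resp.\ columns) are rows $2$ of the ASMs $A_i^{rv}$ (resp.\ columns $2$ of the $A_k^{h}$) and so again have at most three nonzeros; and the second-last row and column of $L$ are handled by applying the index-reversing involutions from the hyperoctahedral symmetry group, which turn row $n-1$ (resp.\ column $n-1$) of $L(A)$ into row $2$ (resp.\ column $2$) of the ASHM-LS of the reflected ASHM. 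The main obstacle is simply isolating the auxiliary observation that row $2$ of an ASM has at most three nonzeros; once that is in place, the rest is bookkeeping already carried out in Theorem \ref{thm:constant_not}.
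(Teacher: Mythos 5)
Your proof is correct and takes essentially the same route as the paper: both reduce the corollary to the single observation that every line of the second row-vertical (or column-vertical) plane has at most one $-1$ --- you derive this from the bound of three nonzeros in row $2$ of an ASM, while the paper derives it from the first plane being a permutation matrix, which is the same fact --- and then rerun the cascade from the proof of Theorem~\ref{thm:constant_not}. No gaps.
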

\begin{proof}
 We can use the same arguments as in the proof of Theorem \ref{thm:constant_not}. The only change is that the ASM $B$ cannot have more than one negative entry in any line, because this would contradict that $B$ is the second row-vertical plane of $A$ (as, in that case, the first row-vertical plane has two ones in the same line). 
 \end{proof}

Example \ref{ex:constant-line} shows that the {\em third} row or column of  an ASHM-LS may be constant.

Let again $z_n=(n,n-1,\ldots,2,1)$. 
The rows and columns of an $n\times n$ ASHM-LS are obtained by vector-matrix multiplications
\begin{equation}
\label{eq:projection}
 z_n ^TA=v(A)=(v_1,v_2,\ldots,v_n) \mbox{ and } A^Tz_n=h(A)=(h_1,h_2,\ldots,h_n)
\end{equation}
where $A$ is some  $n\times n$ ASM.
We call $v(A)$ in (\ref{eq:projection}) the {\it weighted vertical projection} of the ASM $A$, and $h(A)$ the {\it weighted horizontal projection}. It follows from Theorem
 \ref{thm:ASHM-majorization} that the weighted projections of an $n\times n$ ASM are majorized by $z_n$. Thus the weighted projections of an ASM can be regarded as  ``integral smoothings'' of the entries of $z_n$. As we know, the weighted vertical and horizontal projections of the diamond ASMs are the $n$-vectors
 $(\frac{n+1}{2},\frac{n+1}{2},\ldots,\frac{n+1}{2})$ if $n$ is odd and
 $(\frac{n}{2}+1,\ldots,\frac{n}{2}+1,\frac{n}{2},\ldots,\frac{n}{2})$, with $n/2$ each of the two different components, if $n$ is even.
If $A$ is a permutation matrix $P$ corresponding to the permutation $(i_1,i_2,\ldots,i_n)$ of $\{1,2,\ldots,n\}$,
then $h(P)=(i_1,i_2,\ldots,i_n)$, and $v(P)=(j_1,j_2,\ldots,j_n)$ where $(j_1,j_2,\ldots,j_n)$ is the inverse 
permutation. So either of $v(P)$ and $h(P)$ determines $P$. This is in contrast to ASMs in general where two different ASMs can have both the same  weighted horizontal projections and the same
weighted vertical projections.

\begin{example}\label{ex:projections}{\rm
Let
\[A=\left[\begin{array}{r|r|r|r|r|r|r}
&&\phantom{-1}&1&\phantom{-1}&&\\ \hline
&1&&-1&&1&\\ \hline
&&&1&&&\\ \hline
1&-1&1&-1&1&-1&1\\ \hline
&&&1&&&\\ \hline
&1&&-1&&1&\\ \hline
&&&1&&&\end{array}\right].\]
Then $A\ne D_7$ but $v(A)=h(A)=(4,4,4,4,4,4,4)=h(D_7)=v(D_7)$.\endproof
}
\end{example}

In the next lemma, we show that  if the weighted projection of an $n\times n$ ASM  $A$  is a permutation of $\{1,2,\ldots,n\}$, then $A$ is a permutation matrix. Thus, if $A$ is not a permutation matrix, then $z_n^TA$ must have a repeated entry.

\begin{lemma}\label{lem:perm-matrix}
Let $A=[a_{ij}]$ be an $n\times n$ ASM. If $z_n^TA=z_n^TP$ for some $n\times n$  permutation matrix $P$, then $A=P$.
\end{lemma}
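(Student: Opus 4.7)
The plan is to show that $A$ is itself a permutation matrix; once this is done, the injectivity of $v$ on permutation matrices (noted just before the lemma, since $v(P)$ encodes the inverse of the permutation associated with $P$) immediately gives $A = P$. Since $v(P) = z_n^T P$ is a permutation of $(1, 2, \ldots, n)$, the hypothesis forces $v(A) = (v_1, \ldots, v_n)$ to be a permutation of $\{1, 2, \ldots, n\}$ as well.

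I will reuse the device from the proof of Theorem \ref{thm:ASHM-majorization}: write $z_n = \sum_{k=1}^n e^{(k)}$ with $e^{(k)} = e_1 + \cdots + e_k$, and set $u^{(k)} = (e^{(k)})^T A$, the sum of the first $k$ rows of $A$. Because $A$ is an ASM, each $u^{(k)}$ is a $(0,1)$-vector with exactly $k$ ones, so stacking $u^{(1)}, \ldots, u^{(n)}$ produces an $n \times n$ $(0,1)$-matrix $U$ with row-sum vector $(1, 2, \ldots, n)$ and column-sum vector $v(A)$, a permutation of $(1,2,\ldots,n)$. I then peel columns off in decreasing order of $v_j$: order them so that $v_{j_m} = n - m + 1$ for $m = 1, \ldots, n$, and prove by induction on $m$ the two coupled statements that column $j_m$ of $A$ equals $e_m$ and row $m$ of $A$ equals $e_{j_m}^T$. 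For $m = 1$, column $j_1$ of $U$ has sum $n$ and so is identically $1$; in particular $u^{(1)}_{j_1} = 1$ places the single $1$ of row $1$ of $A$ at column $j_1$, and telescoping $a_{k, j_1} = u^{(k)}_{j_1} - u^{(k-1)}_{j_1}$ yields column $j_1 = e_1$. At the inductive step, the hypothesis guarantees $u^{(k)}_{j_m} = 0$ for $k < m$, so the $n - m + 1$ ones in column $j_m$ of $U$ sit in rows $m, \ldots, n$; telescoping identifies column $j_m$ of $A$ as $e_m$, and since $u^{(m)}$ then has its ones exactly at $j_1, \ldots, j_m$, a final telescoping pins row $m$ of $A$ down to $e_{j_m}^T$.

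The delicate point, and the reason the induction must carry both the column and the row claim together, is the argument that column $j_m$ of $U$ has its $m - 1$ zeros confined to rows $1, \ldots, m - 1$: this uses precisely that the previously-handled rows of $A$ are already known to be unit rows with no hidden contributions at column $j_m$. Once the induction is complete, $A$ is a permutation matrix with $v(A) = v(P)$, forcing $A = P$.
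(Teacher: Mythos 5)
Your proof is correct. It shares the paper's overall skeleton --- an induction that identifies one unit column and one unit row of $A$ at a time, ordered by the extremal values of the projection $z_n^TA$ --- but the mechanism you use at each step is genuinely different. The paper works directly with the alternating-sign structure of the columns of $A$: writing the weighted column sum as $(n-r_1+1)-(n-r_2+1)+\cdots$ over the alternating nonzero positions $r_1<r_2<\cdots<r_p$, it observes that this sum is at least $(n-r_p+1)+\frac{p-1}{2}$, so a column with projection value $1$ must be $(0,\ldots,0,1)^T$, a column with value $2$ and a zero in the last row must be $(0,\ldots,0,1,0)^T$, and so on, peeling off from the bottom row upward. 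You instead route everything through the $(0,1)$-matrix $U$ of partial row sums (the same Gale--Ryser-flavored device the paper deploys in Theorem \ref{thm:ASHM-majorization}) and peel off from the top row downward by pure counting: the column of $U$ with sum $n$ is all ones, and at each later stage the inductive knowledge of the earlier rows of $A$ confines the zeros of the relevant column of $U$, after which telescoping recovers the column and row of $A$. Your route avoids any explicit alternating-sum inequality, at the cost of carrying a two-part induction hypothesis (and, as a small remark, the column claims alone already force $u^{(k)}_{j_m}=0$ for $k<m$, since $u^{(k)}$ has exactly $k$ ones which are accounted for at $j_1,\ldots,j_k$); the paper's route is shorter but leans on the sign pattern. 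Both are valid, and yours has the merit of reusing machinery already set up for Theorem \ref{thm:ASHM-majorization}.
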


\begin{proof} Let $v=(i_1,i_2,\ldots,i_n)=z_n^TP=z_nA$, where $v$ is a permutation of $\{1,2,\ldots,n\}$. Let $k$ be such that $i_k=1$. Since the $+1$'s and $-1$'s of $A$ alternate in the columns beginning and ending with a $+1$, it follows that column $k$ of $A$ must equal $(0,0,\ldots,0,1)$; since $A$ is an ASM, the last row of $A$ has only zeros in columns different from $k$. Thus, the last row and column $k$ of $A$ agree with the corresponding row and column of $P$.

Now let $l$ be such that $i_l=2$. Since column $l$ of $A$ has a zero in row $n$, column $l$ of $A$ cannot contain three or more nonzeros, for if it did, $u$ times column $l$ of $A$ would be more than 2. It follows that column $l$ of $A$ equals $(0,0,\ldots,0,1,0)$, and the  row $n-1$ of $A$ has only zeros in columns different from $l$. Hence $A$ agrees with $P$ in both  row $n-1$ and column $l$. Continuing like this, if follows by induction that $A=P$.
\end{proof}

\begin{corollary}\label{cor:perm-matrix}
Let $A$ be an $n\times n\times n$ ASHM with corresponding ASHM $L(A)$.  If $n-1$ rows $($resp., columns$)$ of $L(A)$ are permutations of $\{1,2,\ldots,n\}$, then $A$ is an $n\times n\times n$ permutation hypermatrix and $L(A)$ is a latin square.
\end{corollary}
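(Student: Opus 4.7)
The natural approach is to combine Lemma~\ref{lem:perm-matrix} with the partial-permutation structure that it forces on the row-vertical planes of $A$. As was observed in the proof of Theorem~\ref{thm:ASHM-majorization}, the $i$th row of $L(A)$ equals $z_n^T C_i$, where $C_i$ is the $i$th row-vertical plane of $A$, oriented so that the $k$th row of $C_i$ equals the $i$th row of the horizontal plane $A_k$. Whenever this row of $L(A)$ is a permutation of $\{1,2,\ldots,n\}$, we have $z_n^T C_i = z_n^T P$ for some $n\times n$ permutation matrix $P$, and Lemma~\ref{lem:perm-matrix} yields $C_i=P$. Applying this to each of the $n-1$ rows of $L(A)$ that are permutations, we obtain a set $I\subseteq\{1,\ldots,n\}$ with $|I|=n-1$ such that $C_i$ is a permutation matrix for every $i\in I$.

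Translating this back into information about the horizontal planes: in every horizontal plane $A_k$, each of the rows indexed by $I$ is a unit coordinate vector (one $+1$ and no $-1$'s). Fix such a $k$. Since $A_k$ is an ASM with every column sum equal to $1$, these $n-1$ unit-vector rows must place their $+1$'s in $n-1$ distinct columns; let $j^\ast$ be the remaining column and let $i_0$ be the unique element of $\{1,\ldots,n\}\setminus I$. In every column other than $j^\ast$, the column sum is already $1$, which forces the entry of row $i_0$ there to be $0$. Combined with the row-sum condition $\sum_j a_{i_0 j k}=1$, this makes row $i_0$ a unit vector in column $j^\ast$, so $A_k$ is a permutation matrix.

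Thus every horizontal plane of $A$ is a permutation matrix. Consequently, each row line $A_{*jk}$ and each column line $A_{i*k}$ is a line of an $n\times n$ permutation matrix and hence contains exactly one $+1$, while each vertical line $A_{ij*}$ is a $(0,1)$-sequence whose nonzeros alternate and sum to $1$, hence also contains exactly one $+1$. Therefore $A$ is a permutation hypermatrix, and then $L(A)$ is a Latin square (either directly, or by Theorem~\ref{th:LS}). The column case is identical after interchanging row-vertical and column-vertical planes throughout. The substantive step is the reduction via Lemma~\ref{lem:perm-matrix} from ``row of $L(A)$ is a permutation'' to ``row-vertical plane is a permutation matrix''; once this is in hand, the remainder is routine ASM bookkeeping, so I do not anticipate a genuine obstacle.
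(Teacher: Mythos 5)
The first half of your argument is sound and coincides with the paper's own first step: since $\Row_i(L(A))=z_n^TC_i$ with $C_i$ the $i$th row-vertical plane, Lemma~\ref{lem:perm-matrix} does convert the hypothesis into ``$C_i$ is a permutation matrix for all $i$ in a set $I$ of size $n-1$,'' hence row $i$ of every horizontal plane $A_k$ is a unit vector for $i\in I$. The gap is the very next sentence: the claim that these $n-1$ unit rows of a fixed $A_k$ must occupy $n-1$ distinct columns. The column-sum condition does not force this. Two rows of $I$ may place their $1$'s in the same column $j$ provided the excluded row $i_0$ carries a $-1$ in column $j$: the column then reads $+1,-1,+1$, which alternates correctly and still sums to $1$, and the $-1$ in row $i_0$ is compensated by two $+1$'s elsewhere in that row. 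Nothing in the hypotheses rules this out, and everything after that sentence depends on it.

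This is not a repairable oversight, because the statement itself fails here. Take $A={\mathfrak D}_3$ (the ASHM $L_3$ of~(\ref{eq:one})), whose ASHM-LS is
\[
L({\mathfrak D}_3)=\begin{pmatrix}1&2&3\\ 2&2&2\\ 3&2&1\end{pmatrix}.
\]
Exactly $n-1=2$ of its rows are permutations of $\{1,2,3\}$, so $C_1$ and $C_3$ are permutation matrices; yet the middle horizontal plane is the $3\times 3$ diamond ASM, whose first and third rows both have their $1$ in column~$2$ --- precisely the configuration your distinct-columns claim excludes --- and ${\mathfrak D}_3$ is not a permutation hypermatrix. For what it is worth, the paper's own proof breaks at the corresponding spot: it asserts without argument that if $n-1$ row-vertical planes of an ASHM are permutation matrices then so is the remaining one, and ${\mathfrak D}_3$ refutes that assertion too. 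Indeed, Theorem~\ref{thm:central-ASM-extension} together with the hyperoctahedral symmetry of the ASHM class manufactures counterexamples for every odd $n\ge 3$: embed any non-permutation ASM as the middle plane among permutation matrices and rotate so these become the row-vertical planes. So the step you dismissed as ``routine ASM bookkeeping'' is exactly where the argument --- and the corollary as stated --- collapses; some additional hypothesis (for instance requiring $n-1$ permutation rows \emph{and} $n-1$ permutation columns, or excluding $-1$'s in the remaining row-vertical plane) would be needed.
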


\begin{proof}
By Lemma  \ref{lem:perm-matrix}, $n-1$ of the row-vertical-planes of $A$ are permutation matrices. Since $A$ is an ASHM, it follows that the remaining 
row-vertical-plane is also a permutation matrix; hence $A$ is a permutation hypermatrix and so $L$ is a latin square.
\end{proof}

We conclude this section with the following variation of the majorization order.
For vectors $x=(x_1, x_2, \ldots, x_n)$ and $y=(y_1, y_2, \ldots, y_n)$ (not assumed to be monotone), we write $x \preceq^* y$ provided that
\[
      \sum_{j=1}^p x_j \le \sum_{j=1}^p y_j \;\;\;(p \le n)
\]
where equality holds for $p=n$. 
Note that we only require inequalities to hold for the leading partial sums. 
For an $n \times n$ matrix $A$, let $h(A)$ denote its weighted horizontal projection, i.e., $h(A)=A^Tz_n$ where $z_n=(n,n-1, \ldots, 1)$.

\begin{theorem}
 \label{thm:ASM-chain}
  Let $A$ be an $n \times n$ ASM. Let $p=\sigma_- (A)$. Then there is a sequence of $n \times n$ ASMs 
  \[
     A^{(p)}, A^{(p-1)}, \ldots,  A^{(0)}
  \]
  where $(i)$ $A^{(p)}=A$, $(ii)$ $A^{(0)}$ is a permutation matrix,  $(iii)$ $\sigma_-(A^{(s)})=s$ $(s=0, 1, \dots, p)$, and $(iv)$ the following majorizations hold
  \[
     h(A^{(p)}) \preceq^* h(A^{(p-1)}) \preceq^* \cdots \preceq^* h(A^{(0)}).
  \]
\end{theorem}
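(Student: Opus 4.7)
The plan is to induct on $p = \sigma_-(A)$. The base case $p=0$ is trivial since $A$ is already a permutation matrix. For the inductive step, it suffices to construct a single ASM $\tilde A$ with $\sigma_-(\tilde A) = p-1$ and $h(A) \preceq^* h(\tilde A)$; then apply induction to $\tilde A$ and prepend $A$ to the resulting chain.

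To build $\tilde A$, I would pick a very specific $-1$ of $A$ to eliminate: let $c$ be the smallest column index containing a $-1$, and let $r$ be the smallest row index with $a_{rc} = -1$. By the alternating property of row $r$ and column $c$, there exist $c_L < c$ and $r_T < r$ with $a_{r,c_L} = a_{r_T,c} = +1$ being the nearest $+1$'s (to the left of $(r,c)$ in the row and above $(r,c)$ in the column, respectively). The key structural observation is that by minimality of $c$, columns $1,\ldots,c-1$ contain no $-1$'s, so each such column has exactly one $+1$ and each row contains at most one $+1$ in these columns. In particular, the unique $+1$ in column $c_L$ sits at row $r$, forcing $a_{r_T,c_L}=0$. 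I now define $\tilde A$ by the four local changes
\[
\tilde a_{r,c_L}=0,\quad \tilde a_{r,c}=0,\quad \tilde a_{r_T,c}=0,\quad \tilde a_{r_T,c_L}=+1,
\]
with all other entries unchanged. This removes exactly one $-1$, so $\sigma_-(\tilde A)=p-1$.

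The two verifications to carry out are that $\tilde A$ is an ASM and that $h(A)\preceq^* h(\tilde A)$. For the ASM property, the line sums of $\tilde A$ are clearly preserved, and the alternation survives in rows $r,r_T$ and columns $c,c_L$ because the minimality arguments above guarantee that $(r,c_L)$ was the leftmost nonzero of row $r$, that $(r_T,c)$ was the leftmost nonzero of row $r_T$, and that column $c_L$ had only the single nonzero at $(r,c_L)$; thus the local surgery just shifts this solitary nonzero of column $c_L$ up from row $r$ to row $r_T$ and removes the initial $+1,-1$ segment in the column $c$/row $r$ pair without touching anything else. For the majorization, the cleanest route is the reformulation
\[
\sum_{j=1}^{q} h(B)_j \;=\; W(S^B_q),\qquad W(S)=\sum_{i\in S}(n+1-i),\qquad S^B_q=\Bigl\{i : \sum_{j\le q} b_{ij}=1\Bigr\},
\]
valid for any ASM $B$, with $|S^B_q|=q$. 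The four local changes affect partial row sums only in rows $r,r_T$ and only across columns $c_L,\ldots,c-1$: in that window, row $r$ exits $S_q$ while row $r_T$ enters, giving $W(S^{\tilde A}_q)-W(S^A_q) = r - r_T > 0$. For $q<c_L$ and for $q\ge c$, the two changes in each of rows $r,r_T$ cancel, so $S^{\tilde A}_q = S^A_q$ and $W$ is unchanged. This yields $W(S^A_q)\le W(S^{\tilde A}_q)$ for every $q$ with equality at $q=n$, which is exactly $h(A)\preceq^* h(\tilde A)$.

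The main obstacle is getting the local move right so that both the ASM property and the correct direction of the weighted majorization hold simultaneously; the trick is precisely the minimality of $(r,c)$, which both forces the upper-left corner $(r_T,c_L)$ to be zero (so the flip is legal) and forces $r_T < r$ (so the weight $W$ strictly increases on the affected interval $[c_L,c-1]$). Different choices of which $-1$ to eliminate or of which ``rectangle'' to flip need not give the right direction for $\preceq^*$, as the flip at $(2,2)$ in $F_3^2$ illustrates.
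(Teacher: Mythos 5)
Your proof is correct and follows essentially the same route as the paper: both eliminate one $-1$ at a time by the same $2\times 2$ corner flip at an extremally chosen negative entry (the paper minimizes $i+j$, you minimize the column and then the row), and both check that this move preserves the ASM property while only increasing the leading partial sums of $h$. Your bookkeeping via the partial-row-sum sets $S_q$ and the weight $W$ is an equivalent repackaging of the paper's direct computation of the two changed components of $h$.
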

 \begin{proof} 
 If $p=0$ ($A$ is a permutation matrix), there is nothing to prove, so assume $p=\sigma_-(A)\ge 1$ where $A=[a_{ij}]$ is an ASM. 
 Choose $(k,l)$ with $k+l$ minimal such that $a_{kl}=-1$. Then there is  a unique $l'<l$ with $a_{kl'}$ nonzero, and we have $a_{kl'}=1$. Similarly, there is  a unique $k'<k$ with $a_{k'l}$ nonzero and $a_{k'l}=1$. By the choice of $k, l$,  there is no negative entry in the leading $k \times l$ submatrix, apart from $a_{kl}$. Moreover, $a_{k'l'}=0$. Let $B=[b_{ij}]$ be obtained from $A$ by letting $b_{kl}=b_{k'l}=b_{kl'}=0$,  $b_{k'l'}=1$, and, otherwise, $b_{ij}=a_{ij}$. Then $B$ is an ASM and $\sigma_-(B)=\sigma_-(A)-1$. 
 
 Moreover, $B$ is obtained from $A$ by adding the submatrix 
 \[K=
  \left[
  \begin{array}{rr}
     1 & -1 \\*[\smallskipamount]
     -1 & 1
  \end{array}
  \right]
 \]
to the submatrix of $A$ corresponding to rows $k'< k$ and columns $l'<l$.  Thus, denoting the $s$'th component of $h(A)$ and $h(B)$  by $h(A)_s$ and $h(B)_s$, respectively,  
 \[
 \begin{array}{ll}
    h(B)_{l'}=h(A)_{l'}+(k-k'), \;\; h(B)_{l}=h(A)_{l}-(k-k'), \;\;\mbox{\rm and�} \\*[\smallskipamount]
  h(B)_{i}=h(A)_{i} \;\;\mbox{\rm for all other $i$.}
 \end{array} 
 \]
 This implies that 
 \[
    h(B) \preceq^* h(A). 
 \]
 We now repeat this process with $A$ replaced by $B$. Clearly, after $p$ such operations, we have produced a sequence of ASMs with all the desired properties.
\end{proof}

\begin{example}
 \label{ex:maj-chain}
 {\rm 
 Consider the following ASM $A$ with $h(A)=(2,3,3,4,3)$:
\[
A=A^{(2)} =
\left[
\begin{array}{r|r|r|r|r}
&&&+&\\ \hline
&+&&&\\ \hline
&&+&-&+\\ \hline
+&-&&+&\\ \hline
&+&&&\\ 
\end{array}
\right].
\]
Adding the matrix $K$ to the submatrix induced by rows $2, 4$ and columns $1,2$ gives 
\[
A^{(1)} =
\left[
\begin{array}{r|r|r|r|r}
&&&+&\\ \hline
+&&&&\\ \hline
&&+&-&+\\ \hline
&&&+&\\ \hline
&+&&&\\ 
\end{array}
\right] 
\]
with $w(A^{(1)})=(4,1,3,4,3)$. Finally, adding $K$ to the submatrix induced by rows $1,3$ and columns $3,4$ gives the permutation matrix
\[
A^{(0)} =
\left[
\begin{array}{r|r|r|r|r}
&&+&&\\ \hline
+&&&&\\ \hline
&&&&+\\ \hline
&&&+&\\ \hline
&+&&&\\ 
\end{array}
\right] 
\]
with $w(A^{(0)})=(4,1,5,2,3)$. Here we have the majorizations
\[
   (2,3,3,4,3)   \preceq^* (4,1,3,4,3)  \preceq^* (4,1,5,2,3). 
\] \endproof
}

\end{example}





\section{Completion problems}
 \label{sec:completion}

In this section we study completion problems where some ASMs  are given as horizontal-planes and one wants to extend them to obtain an  ASHM.

Consider a  
 $(0, \pm 1)$-vector $x=(x_1, x_2, \ldots, x_p)$. We say that $x$ is  {\em $(1,*)$-alternating} if its nonzeros (if any) alternate in sign and the first nonzero is a 1.  Similarly, $x$ is  {\em $(*,1)$-alternating} if its nonzeros (if any) alternate in sign and the last nonzero is a 1.  Thus all the lines of an ASM are both $(1,*)$-alternating and $(*,1)$-alternating.

 The $i$th row (resp. column) of a matrix $X$ is denoted by $\row_i(X)$ (resp. $\col_i(X)$).  As before the matrix $J_n$ is the all ones matrix of order $n$.

Let  $(n-1)$ $n\times n$ ASMs $A_1,\ldots, A_{k-1}, A_{k+1},\ldots, A_n$ be given, and let $A_{(k)}$ denote the corresponding $n \times n \times (n-1)$ hypermatrix.  We consider the question: When does there exist an $n\times n$ ASM $A_k$ such that
$A=[A_1,\ldots, A_{k-1}, A_k, A_{k+1},\ldots, A_n]$ is an ASHM? If the answer is affirmative, we say that  $A_{(k)}$ has an {\em ASHM-completion $A$ at $($horizontal$)$ layer $k$}. Let $A_{(k)}=[a_{ijs}]$, where $s \in \{1, 2, \ldots, n\}\setminus \{k\}$.  An obvious necessary condition for  $A_{(k)}$ to have an  ASHM-completion at layer $k$ is 
\begin{equation}
   \label{eq:completion-cond1}
    \begin{array}{llll}
   {\rm (i)} & (a_{ij1}, \ldots, a_{ij,k-1}) &\mbox{\rm is $(1,*)$-alternating} &(1 \le i,j \le n), \\*[\smallskipamount]
   {\rm (ii)} & (a_{ij,k+1}, \ldots, a_{ijn}) &\mbox{\rm is $(*,1)$-alternating} &(1 \le i,j \le n). \\
    \end{array} 
\end{equation}
For the given $A_{(k)}$,  (\ref{eq:completion-cond1}) implies that
$ \sum_{s\not = k} A_s$  is a $(0,1,2)$-matrix.

\begin{theorem}
 \label{thm:ASM-completion1}
  Let $A_1,\ldots, A_{k-1}, A_{k+1},\ldots, A_n$ be   $(n-1)$  $n\times n$ ASMs. Let $A_{(k)}$ be the corresponding $n\times n\times (n-1)$ hypermatrix and let $X^{(k)}=\sum_{s\not = k} A_s$. Define $Y=J_n-X^{(k)}$. 
    Then  $A_{(k)}$ has an ASHM-completion at layer $k$ if and only if both $(\ref{eq:completion-cond1})$ and  the following majorization conditions hold
\begin{equation}
   \label{eq:completion-cond2}
    \begin{array}{lrll}
   {\rm (i)} & \row_i(X^{(k)}) \preceq^* (1, 1, \ldots, 1, 0) &(i \le n), \\*[\smallskipamount]
    {\rm (ii)} & \col_j(X^{(k)}) \preceq^* (1, 1, \ldots, 1, 0) &(j \le n).
    \end{array} 
\end{equation}
If these conditions hold, there is a unique ASHM-completion at layer $k$ which is $A=[A_1,\ldots, A_{k-1}, Y, A_{k+1},\ldots, A_n]$. 
\end{theorem}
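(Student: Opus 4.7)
The plan is to establish, in order, (i) uniqueness of any completion (namely that $A_k=Y$), (ii) necessity of the two conditions, and (iii) sufficiency. For uniqueness, if $A=[A_1,\ldots,A_n]$ is any ASHM extending $A_{(k)}$, then every vertical line of $A$ is a column of its row-vertical plane, which is an ASM, and so sums to $1$. Solving $\sum_{s=1}^{n}a_{ijs}=1$ for the missing entry yields $a_{ijk}=1-X^{(k)}_{ij}=Y_{ij}$, forcing $A_k=Y$.

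For necessity, assume such an ASHM exists (so $A_k=Y$ by the previous step). The vertical line at each $(i,j)$ is $(0,\pm1)$-valued with nonzeros alternating and beginning and ending with $+1$; removing the entry $a_{ijk}$ splits the pattern of nonzeros into a prefix that begins with $+1$ and a suffix that ends with $+1$, which is precisely (\ref{eq:completion-cond1}). Because $A_k=Y$ is an ASM, every partial row and column sum of $Y$ lies in $\{0,1\}$, and rearranging $\sum_{j=1}^{p}Y_{ij}=p-\sum_{j=1}^{p}X^{(k)}_{ij}$ together with the bound $\sum_{j=1}^{p}Y_{ij}\ge 0$ gives the row majorization in (\ref{eq:completion-cond2}); the column case is symmetric.

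For sufficiency, write $X^{(k)}_{ij}=s_1(i,j)+s_2(i,j)$ with $s_1(i,j)=\sum_{s<k}a_{ijs}$ and $s_2(i,j)=\sum_{s>k}a_{ijs}$. Condition (\ref{eq:completion-cond1}) forces each of $s_1,s_2$ into $\{0,1\}$, so $X^{(k)}_{ij}\in\{0,1,2\}$ and $Y_{ij}\in\{-1,0,1\}$; line sums of $Y$ equal $n-(n-1)=1$. A three-case analysis on the value of $X^{(k)}_{ij}$ verifies that setting $a_{ijk}=Y_{ij}$ produces a vertical line at $(i,j)$ whose nonzeros alternate and start and end with $+1$, so every vertical line is an ASM column. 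Once $Y$ is known to be an ASM, the rows of each row-vertical plane (which are vertical lines) and its columns (which are rows of some $A_s$) are all ASM lines, so each row- and column-vertical plane is itself an ASM, and $[A_1,\ldots,A_{k-1},Y,A_{k+1},\ldots,A_n]$ is an ASHM.

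The main obstacle is proving that $Y$ is an ASM. Since $Y$ is $(-1,0,1)$-valued with all line sums equal to $1$, it is enough to show that every partial row and column sum of $Y$ lies in $\{0,1\}$. The nonnegativity side of this condition is exactly what (\ref{eq:completion-cond2}) provides, so the delicate step is the upper bound $\le 1$. I plan to argue by contradiction: if some partial row sum of $Y$ at $(i,p)$ were $\ge 2$, then row $i$ of $Y$ would contain two $+1$'s in columns $j_1<j_2\le p$ with no intervening $-1$, forcing $X^{(k)}_{i,j_1}=X^{(k)}_{i,j_2}=0$; propagating this upward through the $(1,*)$-alternation in (\ref{eq:completion-cond1}) along columns $j_1$ and $j_2$ and summing, one should reach a partial column sum of $X^{(k)}$ that violates the column majorization in (\ref{eq:completion-cond2})(ii). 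Turning this propagation into a clean inequality is the technical heart of the proof.
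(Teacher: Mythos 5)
Your uniqueness step, your necessity step, and your case analysis showing that every vertical line of $[A_1,\ldots,A_{k-1},Y,A_{k+1},\ldots,A_n]$ alternates are all correct and essentially identical to the paper's argument. The genuine gap is exactly where you flag it: proving that the leading partial line sums of $Y$ are at most $1$. You are right that (\ref{eq:completion-cond2}) only yields the nonnegativity half (the paper's own proof verifies only $\sum_{j=1}^p y_{ij}\ge 0$ and then declares $Y$ an ASM, so it skips this half silently), but the propagation argument you sketch to recover the upper bound from the column majorizations cannot be completed, because the implication you are trying to prove is false. Take $n=4$, $k=2$, and the permutation matrices
\[
A_1=\left[\begin{array}{rrrr}1&0&0&0\\0&0&1&0\\0&0&0&1\\0&1&0&0\end{array}\right],\quad
A_3=\left[\begin{array}{rrrr}0&0&0&1\\0&0&1&0\\0&1&0&0\\1&0&0&0\end{array}\right],\quad
A_4=\left[\begin{array}{rrrr}0&1&0&0\\0&0&0&1\\1&0&0&0\\0&0&1&0\end{array}\right].
\]
Each vertical prefix $(a_{ij1})$ is $(1,*)$-alternating and each suffix $(a_{ij3},a_{ij4})$ is $(*,1)$-alternating since $A_3$ and $A_4$ have disjoint supports, so (\ref{eq:completion-cond1}) holds. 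Moreover
\[
X^{(2)}=\left[\begin{array}{rrrr}1&1&0&1\\0&0&2&1\\1&1&0&1\\1&1&1&0\end{array}\right],\qquad
Y=J_4-X^{(2)}=\left[\begin{array}{rrrr}0&0&1&0\\1&1&-1&0\\0&0&1&0\\0&0&0&1\end{array}\right],
\]
and every leading partial row and column sum of $X^{(2)}$ is bounded by the corresponding partial sum $1,2,3,3$ of $(1,1,1,0)$, so (\ref{eq:completion-cond2}) holds as well. Yet row $2$ of $Y$ is $(1,1,-1,0)$, which does not alternate, so $Y$ is not an ASM, and by your own uniqueness step no ASHM-completion at layer $2$ exists.

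What is missing from the hypotheses is the trailing-sum condition: $Y$ is an ASM precisely when its leading \emph{and} trailing partial line sums are nonnegative with total $1$, which translates into $p-1\le\sum_{j=1}^{p}x^{(k)}_{ij}\le p$ for $p<n$, i.e., the reversals of the rows and columns of $X^{(k)}$ must also be $\preceq^*(1,1,\ldots,1,0)$. With that strengthened hypothesis your outline (and the paper's) closes immediately and no contradiction/propagation argument is needed; without it, the sufficiency direction cannot be proved because it is not true. So the difficulty you sensed at the ``technical heart'' is real, but it is a defect of the stated conditions rather than something your proposed argument can repair.
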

\begin{proof}
  Let $X^{(k)}=[x^{(k)}_{ij}]$. Assume first that $A=[A_1,\ldots, A_{k-1}, Z, A_{k+1},\ldots, A_n]$ is an ASHM-completion of layer $k$ of $A_{(k)}$. Then every line sum of $A$ is 1, so the sum of all these matrices (the $n$ layers) is $J_n$, and therefore $Z=J_n-X^{(k)}=Y$. Thus, such a completion is unique, if it exists. Also, condition $(\ref{eq:completion-cond1})$ holds as every line is alternating.
Since $Z=Y=[y_{ij}]$ is an ASM, for every $p< n$, the sum of its first $p$ entries in a row is nonnegative, so for each $i \le n$, 
\[ 
   (*)\;\;\; 0 \le \sum_{j=1}^p y_{ij}=  \sum_{j=1}^p (1-x^{(k)}_{ij})=p-\sum_{j=1}^p x^{(k)}_{ij},
\]
that is,  $\sum_{j=1}^p x^{(k)}_{ij} \le p$. Moreover, 
\[
 \sum_{j=1}^n x^{(k)}_{ij} =\sum_{j=1}^n \sum_{s\not = k}  a_{ijs}=\sum_{s\not = k} \sum_{j=1}^n a_{ijs}=(n-1)\cdot 1=n-1.
 \]
 This proves that $\row_i(X^{(k)}) \preceq^* (1, 1, \ldots, 1, 0)$ for each $i \le n$. Similarly, one obtains 
$\col_j(X^{(k)}) \preceq^* (1, 1, \ldots, 1, 0)$ for each $j \le n$. This shows the necessity of the condition.

Conversely, assume that conditions (\ref{eq:completion-cond1}) and (\ref{eq:completion-cond2}) hold. Define 
\[
  A=[A_1,\ldots, A_{k-1}, Y, A_{k+1},\ldots, A_n] = [a_{ijs}].
\]
We verify that $A$ is an ASHM. Since $Y=J_n-X^{(k)}$ and $X^{(k)}=\sum_{s\not = k} A_s$, $\sum_s a_{ijs}=1$ for each $i,j \le n$. Let $i,j \le n$. Consider the  line $(a_{ij1}, a_{ij2}, \ldots, a_{ijn})$ in $A$, and  its subvectors 
$u=(a_{ij1}, a_{ij2}, \ldots, a_{ij,k-1})$ and $v=(a_{ij,k+1}, a_{ij,k+2}, \ldots, a_{ijn})$. By  (\ref{eq:completion-cond1}) $u$ is $(1,*)$-alternating and $v$ is $(*,1)$-alternating. Define  $\sigma_1=\sum_{s=1}^{k-1} a_{ijs}$ and $\sigma_2=\sum_{s=k+1}^{n} a_{ijs}$. Then we have the following cases: 

\noindent (i) $\sigma_1=\sigma_2=0$. Then the last nonzero of $u$ (if any)  is $-1$,  and the first nonzero of $u$ (if any)  is $-1$. Moreover, $a_{ijk}=y_{ij}=1$, so the line is alternating.

\noindent (ii) $\sigma_1=1$, $\sigma_2=0$. Then the last nonzero of $u$ is 1, and the first nonzero of $v$ (if any) is $-1$, and $a_{ijk}=y_{ij}=1-(0+1)=0$, so the line is alternating.

\noindent (iii) $\sigma_1=0$, $\sigma_2=1$. Similar to case (ii).

\noindent (iv) $\sigma_1=1$, $\sigma_2=1$. Then the last nonzero of $u$ (if any), and the first nonzero of $v$,  is 1, and $a_{ijk}=y_{ij}=1-(1+1)=-1$, so the line is alternating.

It only remains to show that $Y$ is an ASM. Let $i \le n$. The computation in $(*)$ shows that, for each $p<n$,  $\sum_{j=1}^p y_{ij} \ge 0$ as $\sum_{j=1}^p x^{(k)}_{ij} \le p$ by  (\ref{eq:completion-cond1}). Moreover, as each $A_s$ is an ASM,  
\[
  \sum_{j=1}^n y_{ij}=n-\sum_{j=1}^n x^{(k)}_{ij}= n-\sum_{j=1}^n \sum_{s\not = k} a_{ijs}=
  n- \sum_{s\not = k} \sum_{j=1}^n a_{ijs} =n-(n-1)\cdot 1 = 1.
\]  
Similarly, we obtain, for each $j \le n$, that $\sum_{i=1}^p y_{ij} \ge 0$ for each $p <n$, and $\sum_{i=1}^n y_{ij} =1$. This shows that $Y$ is an ASM, and therefore $A$ is an ASHM.
\end{proof}

In what follows we shall make use of  the following classical decomposition  result (\cite{Brualdi06,BR91}).

\begin{theorem}
 \label{thm:nonneg-decomp}
    Let $A$ be an $n \times n$ nonnegative integral matrix with equal row and column sums, say equal to $k$. Then $A$ may be decomposed as the sum of $k$ permutation matrices.
\end{theorem}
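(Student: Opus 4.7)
The plan is to prove the result by induction on $k$, with the main tool being Hall's marriage theorem applied to the bipartite support graph of $A$. The base case $k=0$ is trivial (the zero matrix), and for $k=1$ a nonnegative integer matrix with row and column sums $1$ is immediately a permutation matrix.

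For the inductive step, I would aim to extract a single permutation matrix $P$ whose support is contained in the support of $A$, i.e., $p_{ij}=1$ only when $a_{ij}\ge 1$. Once such a $P$ is obtained, the matrix $A-P$ is nonnegative integral (since we subtract $1$ only from positive entries) with all row and column sums equal to $k-1$, and the inductive hypothesis furnishes a decomposition $A-P=P_2+\cdots+P_k$, giving $A=P+P_2+\cdots+P_k$ as desired.

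To produce $P$, form the bipartite graph $G=(R\cup C,E)$ with $R=\{r_1,\ldots,r_n\}$, $C=\{c_1,\ldots,c_n\}$, and $r_ic_j\in E$ iff $a_{ij}>0$; a perfect matching in $G$ corresponds exactly to a permutation matrix $P$ supported on the positive entries of $A$. I would verify Hall's condition directly. Given a subset $S\subseteq R$ of rows, let $N(S)\subseteq C$ be the set of columns containing a positive entry in some row of $S$. All positive entries in the rows of $S$ lie in the columns of $N(S)$, so
\[
k|S|=\sum_{i:r_i\in S}\sum_{j=1}^n a_{ij}=\sum_{i:r_i\in S}\sum_{j:c_j\in N(S)} a_{ij}\le \sum_{j:c_j\in N(S)}\sum_{i=1}^n a_{ij}=k|N(S)|,
\]
where the last equality uses that every column of $A$ sums to $k$. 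Since $k\ge 1$ we conclude $|S|\le |N(S)|$, so Hall's condition holds and a perfect matching exists.

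The only genuine content is the Hall verification above; there is no real obstacle, as the constant line sums make the counting argument essentially automatic. An alternative would be to derive $P$ from König's theorem on $0/1$ matrices applied to the support pattern of $A$, but the direct Hall argument is cleaner since the constant line sums immediately yield the defect-free condition.
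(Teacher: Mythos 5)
Your proof is correct. Note that the paper does not actually prove Theorem \ref{thm:nonneg-decomp}; it states it as a classical result and cites \cite{Brualdi06,BR91}. Your argument --- induction on $k$, extracting one permutation matrix supported on the positive entries via Hall's marriage theorem, with the defect condition $k|S|\le k|N(S)|$ following from the constant line sums --- is precisely the standard proof found in those references, so there is nothing to reconcile with the paper's treatment.
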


Let $k<n$ and let $A=[A_1, A_2, \ldots, A_{k}]$ be an $n \times n \times k$ hypermatrix where $A_1, A_2, \ldots, A_k$ are  ASMs. We consider the problem of extending $A$ by inserting $(n-k)$\ $n \times n$ ASMs $A_{k+1}, \ldots, A_n$ so that $A'=[A_1,\ldots, A_{k}, A_{k+1}, \ldots, A_n]$ is an $n \times n \times n$ ASHM. An obvious necessary condition on the vertical lines is the following:
\begin{equation}
   \label{eq:completion-cond3}
    \begin{array}{llll}
   (a_{ij1}, \ldots, a_{ijk}) &\mbox{\rm is $(1,*)$-alternating} &(1 \le i,j \le n). \\*[\smallskipamount]
    \end{array} 
\end{equation}

The next theorem says that no other condition is  needed. 

\begin{theorem}
 \label{thm:SASM-completion2}
  Let $k<n$, and let $A_1, A_2, \ldots, A_{k}$ be   $k$  $n\times n$ ASMs satisfying  $(\ref{eq:completion-cond3})$. Then there exist $n \times n$ ASMs $A_{k+1},  \ldots, A_n$ such that $A'=[A_1,\ldots, A_{k}, A_{k+1}, \ldots, A_n]$ is an $n \times n \times n$ ASHM. 
  Moreover, each of these additional ASMs  $A_j$ $(k<j\le n)$ may be chosen as a permutation matrix. 
\end{theorem}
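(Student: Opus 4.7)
The plan is to reduce the completion to the classical decomposition result, Theorem \ref{thm:nonneg-decomp}. Let $X=A_1+A_2+\cdots+A_k$. By condition (\ref{eq:completion-cond3}), every vertical line $(a_{ij1},\ldots,a_{ijk})$ is $(1,*)$-alternating, so its partial sums lie in $\{0,1\}$; in particular $X$ is a $(0,1)$-matrix. Since each $A_s$ has all line sums equal to $1$, the matrix $X$ has all row and column sums equal to $k$. Consequently $Y := J_n - X$ is a $(0,1)$-matrix with all row and column sums equal to $n-k$. By Theorem \ref{thm:nonneg-decomp} we may write $Y = P_{k+1} + P_{k+2}+ \cdots + P_n$ where each $P_j$ is an $n\times n$ permutation matrix. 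Set $A_j := P_j$ for $k<j\le n$, and put $A' = [A_1,\ldots, A_k, A_{k+1},\ldots, A_n]$.

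Next I would verify that $A'$ is an ASHM. The horizontal planes are ASMs by construction. Since every row of a row-vertical plane of $A'$ is some vertical line $A_{ij*}$, and every column is a row line $A_{i*k}$ coming from the ASM $A_k$, it suffices to show that each vertical line of $A'$ alternates beginning and ending with $+1$ (and similarly for column-vertical planes). Fix $(i,j)$ and let $\sigma = \sum_{s=1}^k a_{ijs}\in\{0,1\}$. The entries $a_{ij,k+1},\ldots,a_{ijn}$ come from permutation matrices, hence lie in $\{0,1\}$, and their sum equals $Y_{ij}=1-\sigma$. If $\sigma=1$, then all of $a_{ij,k+1},\ldots,a_{ijn}$ vanish, and by the $(1,*)$-alternating hypothesis the full line is alternating starting and ending with $+1$. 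If $\sigma=0$, then exactly one entry with $s>k$ equals $+1$, and the first $k$ entries are either all zero or form an alternating sequence whose last nonzero is $-1$ (since they start with $+1$ and sum to $0$); in either case appending the lone $+1$ yields a line that alternates and starts/ends with $+1$.

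Combining these observations shows that $A'$ has the ASM property on each of its $3n$ planes, so $A'$ is an ASHM. Since each inserted plane $A_j$ is a permutation matrix, the ``moreover'' claim follows automatically. There is no real obstacle here: the heart of the argument is the complementary-sum observation that reduces the problem to Theorem \ref{thm:nonneg-decomp}, after which the $(1,*)$-alternating hypothesis and the $(0,1)$-nature of the remaining layers force every vertical line to alternate no matter which decomposition of $Y$ into permutation matrices is chosen.
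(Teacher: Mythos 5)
Your proof is correct and follows essentially the same route as the paper's: form $S=\sum_{s\le k}A_s$, observe it is a $(0,1)$-matrix with line sums $k$ by the $(1,*)$-alternating hypothesis, decompose $J_n-S$ into permutation matrices via Theorem \ref{thm:nonneg-decomp}, and check the two cases for each vertical line. The only (welcome) addition is your explicit remark that verifying the vertical lines suffices because all row and column lines already lie in the given horizontal ASMs.
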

\begin{proof}
 Let $S=\sum_{s=1}^k A_s$. Then each line sum in $S$ is $k$. Moreover, it follows from condition $(\ref{eq:completion-cond3})$ that $S$ is a $(0,1)$-matrix. Let $T=J_n-S$ which is also a $(0,1)$-matrix with each line sum $n-k$. Clearly $S$ and $T$ have disjoint supports. 
 
Note that if $S$ has a 1 in position $(i,j)$ it means that $\sum_{s=1}^k a_{ijs}=1$, and the vertical line $(a_{ij1}, \ldots, a_{ijk})$ is alternating. Since all line sums in $T$ are $n-k$, by Theorem \ref{thm:nonneg-decomp},  there are permutation matrices $A_{k+1}, \ldots, A_n$ such that 
\[
    T=\sum_{s=k+1}^n A_s.
\]
Define the $n \times n \times n$ hypermatrix  $A'=[A_1,\ldots, A_{k}, A_{k+1}, \ldots, A_n]=[a_{ijs}]$. Then $A'$ is an ASHM because: 

\noindent (i) In positions $(i,j)$ where $S$ has a 1 the line $(a_{ij1}, \ldots, a_{ijk})$ is alternating and has sum 1 while $a_{ijs}=0$ for $s>k$.  

\noindent (ii) In positions $(i,j)$ where $S$ has a 0, $T$ has a 1, so exactly one $A_s$, for $s>k$, has a 1 in that position $(i,j)$. Moreover, the line $(a_{ij1}, \ldots, a_{ijk})$ is $(1,*)$-alternating so its last nonzero, if any, is $-1$. 
So the whole line $(a_{ij1}, \ldots, a_{ijn})$ is alternating.
\end{proof}

We remark that one may find the permutation matrices $A_{k+1}, \ldots, A_n$ in Theorem \ref{thm:SASM-completion2} efficiently (in polynomial time)  since matching algorithms may be be used to find the decomposition in Theorem \ref{thm:nonneg-decomp}.

\begin{example}
\label{exa:completion-2layers}
{\rm  Consider Theorem \ref{thm:SASM-completion2}, and its proof,  with  $k=2$ and 
\[
A_1=\left[\begin{array}{r|r|r|r}
&1&& \\ \hline
&&1& \\ \hline
1&&& \\ \hline
&&& 1\end{array}\right], \;\; 
A_2=\left[\begin{array}{r|r|r|r}
&&1& \\ \hline
1&&-1&1 \\ \hline
&1&& \\ \hline
&&1& \end{array}\right].
\]
Then 
\[
S=\left[\begin{array}{r|r|r|r}
&1&1& \\ \hline
1&&&1 \\ \hline
1&1&& \\ \hline
&&1&1 \end{array}\right],  \;\; 
T=\left[\begin{array}{r|r|r|r}
1&&&1 \\ \hline
&1&1& \\ \hline
&&1&1 \\ \hline
1&1&& \end{array}\right]. 
\]
So,  we can extend $[A_1, \; A_2]$ into an ASHM by adding the permutation matrices
\[
A_3=\left[\begin{array}{r|r|r|r}
1&&& \\ \hline
&&1& \\ \hline
&&&1 \\ \hline
&1&& \end{array}\right], \;\;
A_4=\left[\begin{array}{r|r|r|r}
&&&1 \\ \hline
&1&& \\ \hline
&&1& \\ \hline
1&&& \end{array}\right].
\]
}
\end{example} \endproof

Recall that $\sigma_-(A)$ denotes the number of negative entries of a matrix, or hypermatrix,  $A$.
By Theorem \ref{th:diamond}  the maximum number $m_n$ of nonzeros in an $n\times n\times n$  ASHM is given by $m_n=\frac{n(n^2+2)}{3}$. Therefore the maximum number $m^-_n$ of $-1$'s in an ASHM is 
\begin{equation}\label{eq:mn-}
   m^-_n=\sigma_-({\mathfrak D}_n)=m_n-n^2=\frac{n(n^2-3n+2)}{3}.
\end{equation}

Let $n\ge 3$. Our goal now is to show that, for any nonnegative integer $t \le m^-_n$, there exists an ASHM $A$ with $\sigma_-(A)=t$ (for $n\le 2$ there is nothing to show). 

First, we describe a class of ASMs that are obtained from the diamond ASMs. The construction is illustrated in Example \ref{ex:neg-ext1}. 
Let $1<k< n$ and consider $F^k_n$. For $1 \le j \le k$, the {\em $j'$th positive diagonal} of $F^k_n$ consists  of the positions $(j,k-j+1), (j+1,k-j+2), \ldots, (n+j-k,n-j+1)$. Similarly, for $j \le k-1$  the {\em $j'$th negative diagonal} of $F^k_n$ consists  of the positions $(j+1,k-j+1), (j+2,k-j+2), \ldots, (n+j-k,n-j+2)$. 
 Define $p=n-k$ which is the number of $-1$'s in each of the negative diagonals of $F^k_n$. We have $\sigma_-(F^k_n)=(k-1)(n-k)$. Let $r$ and $s$ be integers with $0 \le r <k$ and $0 \le s < n-k$.
Let $F^{k,r,s}_n=[a_{ij}]$ be the $n \times n$ matrix obtained from $F^k_n$ as follows:

(i) If $r>0$, let each entry in the first $r$ positive diagonals and the first $r$ negative diagonals be zero (some of these entries may be changed again in step (ii));

(ii) If $p$ is even, let $a_{1n}=a_{2,n-1}=\cdots = a_{r,n-r+1}=1$. Otherwise, when $p$ is odd, there are two subcases: (a) $r$ is odd; then let the entry be 1 in each of the $r$ first positions in the sequence $(1,n)$, $(2,n-2)$, $(3,n-1)$, $(4,n-4)$, $(5,n-3)$, $\ldots$; (b) $r$ is even; let the entry be 1 in each of the $r$ first positions in the sequence $(1,n-1)$, $(2,n)$, $(3,n-3)$, $(4,n-2)$,  $\ldots$.

(iii) If $s>0$, let the last $s$ entries in the last negative diagonal be zero, and the last $s+1$ entries of the last positive diagonal be zero. Finally, let $a_{k+s,1}=1$.

Define 
\[
   K_{n,k}=\{(i,i+k-1): 1 \le i \le n-k+1\} \cup \{(j+k-1,j): 1 \le j \le n-k+1\}
\]
which consists of positions in the upper right triangle and the lower left triangle.

\begin{lemma}
 \label{lem:Fkrs}
   Each matrix $F^{k,r,s}_n$ is an ASM, and $\sigma_-(F^{k,r,s}_n)=(k-r-1)(n-k)-s$. $F^{k,r,s}_n$ is obtained from $F^k_n$ by replacing some zeros in $K_{n,k}$ by $1$, and replacing some nonzeros outside $K_{n,k}$ by zero. 
\end{lemma}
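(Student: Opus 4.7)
The plan is to verify the three parts of the lemma---that $F^{k,r,s}_n$ is an ASM, that its number of $-1$ entries equals $(k-r-1)(n-k)-s$, and that the specified structural change holds relative to $F^k_n$---by tracking what each of the three construction steps does to the rows and columns of $F^k_n$. Throughout I would use the parametrization that the $t$-th position of positive diagonal $j$ is $(j+t-1, k-j+t)$ for $t=1,\ldots,n-k+1$ (value $+1$) and the $t$-th position of negative diagonal $j$ is $(j+t, k-j+t)$ for $t=1,\ldots,n-k$ (value $-1$), so that position $(i,j)$ with $j-i\in\{k-1,k-3,\ldots,-(k-1)\}$ is characterized by diagonal index $(k+1-(j-i))/2$ or $(k-(j-i))/2$.

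First I would analyze step (i). A direct inspection shows that row $i$ of $F^k_n$ (for $1\le i \le k$) has its nonzeros on positive diagonals $1,2,\ldots,i$ and negative diagonals $1,2,\ldots,i-1$, alternating $+,-,\ldots,+$ from left to right. Zeroing the first $r$ positive and first $r$ negative diagonals therefore empties rows $1,\ldots,r$ entirely; for $r < i \le k$ the surviving nonzeros in row $i$ still alternate and sum to $1$, beginning at column $k-i+1$ and terminating with a $+1$ on positive diagonal $r+1$ at column $k+i-2r-1$. A symmetric analysis for columns shows that columns $n-r+1,\ldots,n$ are emptied while other columns are truncated but still alternating, and the rows and columns near the opposite corner $(n,1)$ are either unaffected or affected analogously.

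Next I would verify step (ii). The rows $1,\ldots,r$ emptied by step (i) each require a single $+1$ in a column of $\{n-r+1,\ldots,n\}$, and these placements must also supply the missing $+1$ in the corresponding emptied column. A direct case analysis on the parities of $p=n-k$ and $r$ confirms that the three prescribed placements realize exactly this bijection between emptied rows and emptied columns: each affected row and column then contains precisely one $+1$ and is trivially alternating, while no placement conflicts with a surviving nonzero. Step (iii) is a similar local modification at the lower-left corner: the prescribed zeroing removes the top portion of positive diagonal $k$ and of negative diagonal $k-1$ adjacent to $(k,1)$, and setting $a_{k+s,1}=1$ shifts the unique $+1$ of column $1$ from row $k$ to row $k+s$; verifying rows $k,k+1,\ldots,k+s$ and column $1$ shows the alternating property is preserved.

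For the counting statement, $F^k_n$ contains $(k-1)(n-k)$ entries equal to $-1$, with $n-k$ on each of the $k-1$ negative diagonals. Step (i) removes exactly $r(n-k)$ of them, step (ii) introduces no negatives, and step (iii) removes $s$ further negatives, yielding $\sigma_-(F^{k,r,s}_n)=(k-1)(n-k)-r(n-k)-s=(k-r-1)(n-k)-s$. The structural description is then immediate from the construction: the only newly created $+1$'s are those placed by steps (ii) and (iii), all of which lie in the upper-right or lower-left triangular regions associated with $K_{n,k}$, while every position where a nonzero is converted to zero lies on an interior diagonal outside $K_{n,k}$. The main obstacle I expect is the parity case analysis in step (ii), which requires handling three subcases to ensure exactly one $+1$ lands in each emptied column and each emptied row, with the resulting row and column each forming a valid $(1,*)$- and $(*,1)$-alternating pattern in the full matrix.
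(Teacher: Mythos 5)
Your proposal is correct and follows the same route as the paper, which simply states that the lemma ``follows from the construction of $F^{k,r,s}_n$'' and omits all details; your write-up is exactly the verification the paper leaves to the reader (tracking which diagonals each row and column meets, checking the corner repairs in steps (ii) and (iii), and counting the $(n-k)$ negatives removed per zeroed diagonal plus the $s$ removed at the lower-left corner). The only caveats are cosmetic: your description of which diagonals meet row $i$ needs the obvious adjustment when $i>n-k+1$, and the final structural sentence inherits the paper's own looseness about whether $K_{n,k}$ means the two extreme diagonals or the two triangles they bound — neither affects the argument.
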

\begin{proof}
This follows from  the construction of $F^{k,r,s}_n$.
\end{proof}

\begin{example}
\label{ex:neg-ext1}
{\rm
Let $n=8$ and $k=5$, so 
\[
F^5_8=\left[
\begin{array}{r|r|r|r|r|r|r|r}
&&&&1&&&\\ \hline
&&&1&-1&1&&\\ \hline
&&1&-1&1&-1&1&\\ \hline
&1&-1&1&-1&1&-1&1\\ \hline
1&-1&1&-1&1&-1&1&\\ \hline
&1&-1&1&-1&1&&\\ \hline
&&1&-1&1&&&\\ \hline
&&&1&&&&
\end{array}
\right].
\]
Then  $p=n-k=3$ is odd. For $r=2$ and $s=0$,  we obtain 
\[
F^{5,2,0}_8=\left[
\begin{array}{r|r|r|r|r|r|r|r}
&&&&&&1&\\ \hline
&&&&&&&1\\ \hline
&&1&&&&&\\ \hline
&1&-1&1&&&&\\ \hline
1&-1&1&-1&1&&&\\ \hline
&1&-1&1&-1&1&&\\ \hline
&&1&-1&1&&&\\ \hline
&&&1&&&&
\end{array}
\right].
\]
Here the first two positive and negative diagonals of $F^k_n$ now contain zeros, and, as a compensation, we have the two ones in the upper right corner. 
For $r=1$ and $s=2$, the first positive and negative diagonals are replaced by zeros, and a modification is done in the lower left corner: 
\[
F^{5,1,2}_8=\left[
\begin{array}{r|r|r|r|r|r|r|r}
&&&&&&&1\\ \hline
&&&1&&&&\\ \hline
&&1&-1&1&&&\\ \hline
&1&-1&1&-1&1&&\\ \hline
&&1&-1&1&-1&1&\\ \hline
&&&1&-1&1&&\\ \hline
1&&&-1&1&&&\\ \hline
&&&1&&&&
\end{array}
\right].
\]

}\end{example}  \endproof

\begin{theorem}
 \label{thm:all-integers}
  Let $n$ be a positive integer. Then, where $m_n^-$ is given by $(\ref{eq:mn-})$, for every integer $0 \le t \le m^-_n$, there exists an ASHM $A$ with $\sigma_-(A)=t$. 
\end{theorem}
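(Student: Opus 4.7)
The plan is to realize each prescribed count $t$ by modifying the diamond ASHM $\mathfrak{D}_n$ at one specific horizontal layer using the ASMs $F_n^{k,r,s}$ furnished by Lemma \ref{lem:Fkrs}, and then completing the remaining layers by permutation matrices using Theorem \ref{thm:SASM-completion2}.

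For $n\le 2$ we have $m_n^-=0$ and any permutation hypermatrix works, so assume $n\ge 3$. Set $S_k=\sum_{j=1}^{k}(j-1)(n-j)$, so $S_1=0$, $S_k-S_{k-1}=(k-1)(n-k)$, and $S_{n-1}=m_n^-$. Given $t\in[0,m_n^-]$, I first choose $k_0\in\{2,\ldots,n-1\}$ with $S_{k_0-1}\le t\le S_{k_0}$; such $k_0$ exists since the intervals $[S_{k_0-1},S_{k_0}]$ cover $[0,m_n^-]$. Setting $t'=t-S_{k_0-1}\in[0,(k_0-1)(n-k_0)]$, Lemma \ref{lem:Fkrs} makes $\sigma_-(F_n^{k_0,r,s})=(k_0-r-1)(n-k_0)-s$ attain every integer in $[0,(k_0-1)(n-k_0)]$ as $(r,s)$ varies, so I pick $(r,s)$ with $\sigma_-(F_n^{k_0,r,s})=t'$.

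Now I form the partial hypermatrix $H=[F_n^1,\ldots,F_n^{k_0-1},F_n^{k_0,r,s}]$ of depth $k_0$ and verify condition $(\ref{eq:completion-cond3})$. On each vertical line, the first $k_0-1$ entries agree with those of the diamond $\mathfrak{D}_n$, which is an ASHM, so they form a $(1,*)$-alternating prefix. The $k_0$-th entry differs from $F_n^{k_0}$ only at positions altered by the construction of $F_n^{k_0,r,s}$, so there are three cases: (a) no change at $(i,j)$, which leaves the line as a prefix of an alternating line; (b) a nonzero becomes $0$, which trivially preserves $(1,*)$-alternation since a $(1,*)$-alternating sequence remains so when a trailing $0$ is appended; and (c) a $0$ becomes $1$. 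For case (c), inspection of steps (ii) and (iii) of the construction locates the altered positions on the shifted anti-diagonals $(1,n-1),(2,n),\ldots$ in the upper-right (or their subcase (a) analogue) and at $(k_0+s,1)$ in the lower-left; a direct check shows they satisfy $|i-j|\ge k_0$, i.e.\ they lie strictly outside the rhombus of $F_n^{k_0}$. Because the rhombus of $F_n^k$ is contained in that of $F_n^{k_0}$ in the relevant upper-right and lower-left directions for $k\le k_0$, these positions are zero throughout the first $k_0-1$ layers of $\mathfrak{D}_n$, so the line reduces to $(0,\ldots,0,1)$, again $(1,*)$-alternating.

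Having verified $(\ref{eq:completion-cond3})$ for $H$, I apply Theorem \ref{thm:SASM-completion2} to extend $H$ with $n-k_0$ permutation matrices $A_{k_0+1},\ldots,A_n$, producing an ASHM $A$. Since each added layer contributes no $-1$'s,
\[
  \sigma_-(A)=\sum_{k=1}^{k_0-1}\sigma_-(F_n^k)+\sigma_-(F_n^{k_0,r,s})=S_{k_0-1}+t'=t,
\]
as required. The principal obstacle is the case (c) verification: explicitly going through each subcase of the construction of $F_n^{k_0,r,s}$ to confirm that the positions where a $0$ entry is changed to a $1$ genuinely lie strictly outside the rhombus of $F_n^{k_0}$ (the interior positions touched by step (ii) are already $\pm 1$ in $F_n^{k_0}$ and fall under case (a) after the compensating zero/one swap). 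Once this geometric fact is established, the rest of the argument is bookkeeping.
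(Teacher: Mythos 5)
Your proof is the same as the paper's: pick the first horizontal layer $k_0$ at which the cumulative negative count of ${\mathfrak D}_n$ reaches $t$, replace $F_n^{k_0}$ by $F_n^{k_0,r,s}$ to hit the exact count, check condition (\ref{eq:completion-cond3}), and complete with permutation matrices via Theorem \ref{thm:SASM-completion2}; the paper compresses the verification of (\ref{eq:completion-cond3}) into the second assertion of Lemma \ref{lem:Fkrs}. The one point to correct in your write-up is the blanket claim in case (c) that every position where step (ii) turns a $0$ into a $1$ satisfies $|i-j|\ge k_0$: the step (ii) sequence marches toward the main diagonal, and when $r$ is close to $k_0-1$ and $n-k_0$ is small it reaches positions well inside the rhombus of $F_n^{k_0}$ (for instance $n=10$, $k_0=9$, $r=7$ produces the position $(7,5)$, which is nonzero in $F_{10}^{3},\ldots,F_{10}^{9}$, and this parameter choice genuinely arises, namely for $t=S_{n-2}+1$). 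For such positions the vertical line is not $(0,\ldots,0,1)$. What saves the argument --- and what your final parenthetical correctly gestures at but does not prove --- is a parity fact built into the $p$ even/odd split and the zigzag ordering of step (ii): at each of these interior positions the entry of $F_n^{k_0}$ is $+1$, equivalently the prefix of the vertical line through layers $1,\ldots,k_0-1$ has an even number of nonzeros and ends in $-1$, so the net effect of steps (i) and (ii) there is no change at all and the line remains $(1,*)$-alternating. With that parity check supplied (and the easy observation that a trailing zero or a $1$ placed outside all earlier rhombi preserves $(1,*)$-alternation), your argument is complete and coincides with the paper's.
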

\begin{proof}
 First, note that any permutation ASHM does not have any negative entries. Let ${\mathfrak D}_n=[A_1,A_2,\ldots, A_n]$ be the diamond ASHM of size $n \times n \times n$. Then  ${\mathfrak D}_n$ has $m^-_n$ negative entries. Let  $1 < t <m^-_n$. Choose $k$ minimal such that the hypermatrix $[A_1, A_2,\ldots, A_k]$ has at least $t$ negative entries. So $k \le n-1$. 

 We claim that there exists an $n \times n$ ASM $B$ such that each vertical line in the $n \times n \times k$ hypermatrix $A'=[A_1,A_2,\ldots,A_{k-1},B]$ is $(1,*)$-alternating, i.e., $(\ref{eq:completion-cond3})$ holds and, moreover, $A'$ has $t$ negative entries. 

The claim follows from Lemma \ref{lem:Fkrs} as we can choose $B=F^{k,r,s}_n$ for suitable $r$ and $s$ so that $B$ has $t$ negative entries (the $r$ and $s$ are unique). Then $B$ is an ASM and the $(1,*)$-alternating property follows from the second part of the lemma, as all the matrices $A_1,  A_2, \ldots, A_{k-1}$ have only zeros in positions in $K_{n,k}$. 

Now, due to the claim, we apply Theorem \ref{thm:SASM-completion2}  to obtain $n \times n$ permutation matrices $P_s$ ($k+1\le s\le n$) such that $A=[A_1,A_2,\ldots,A_{k-1},B, P_{k+1}, P_{k+2}, \ldots, P_n]$ is an ASHM, and has exactly $t$ negative entries as desired.
\end{proof}

We remark that the ASHM $A$ constructed in the proof of Theorem \ref{thm:all-integers} is extreme in that it has the maximum number of elements in the first $k-1$ horizontal layers, and no negative entries in the last $n-k$ layers.

We now show that there exists an $n\times n\times n$ ASHM whose $-1$'s
are confined to the $k$th horizontal level and whose  number  is any number below the maximum possible at the $k$th horizontal level.

\begin{theorem}
 \label{thm:subset-perm-ext}
  Let $0 \le t \le \sigma_-(F^k_n)$. Then there exists an $n \times n$ ASM $A_k$ with $t$ negative entries and  $n\times n$ permutation matrices $P_1,\ldots,P_{k-1},P_{k+1},\ldots,P_n$ such that
\[
  A=[P_1,\ldots,P_{k-1},A_k,P_{k+1},\ldots,P_n]
\]
is an $n\times n\times n$ ASHM.
\end{theorem}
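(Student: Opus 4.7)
The plan is constructive: choose $A_k = F_n^{k,r,s}$ from the family of Lemma \ref{lem:Fkrs} with $(r,s)$ realizing $\sigma_-(A_k)=t$, produce $P_1,\ldots,P_{k-1}$ by decomposing the $(0,1)$-matrix $A^{(k-1)}:=F_n^1+\cdots+F_n^{k-1}$ into permutation matrices, and finally invoke Theorem \ref{thm:SASM-completion2} to supply $P_{k+1},\ldots,P_n$.

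For the first two ingredients, the enumeration right after Lemma \ref{lem:Fkrs} shows that the values $\sigma_-(F_n^{k,r,s})=(k-r-1)(n-k)-s$ with $0\le r<k$ and $0\le s<n-k$ (together with $(r,s)=(k-1,0)$) cover every integer from $0$ to $(k-1)(n-k)=\sigma_-(F_n^k)$, so a pair $(r,s)$ achieving $\sigma_-(F_n^{k,r,s})=t$ exists. By Lemma \ref{lem:shared}, $A^{(k-1)}$ is a $(0,1)$-matrix whose every line sum equals $k-1$, and Theorem \ref{thm:nonneg-decomp} then writes $A^{(k-1)}=P_1+\cdots+P_{k-1}$ with each $P_s$ an $n\times n$ permutation matrix.

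The central step is to verify the $(1,*)$-alternating condition (\ref{eq:completion-cond3}) for $[P_1,\ldots,P_{k-1},A_k]$. Since $(P_1+\cdots+P_{k-1})_{ij}=A^{(k-1)}_{ij}\in\{0,1\}$, the first $k-1$ entries of the vertical line at $(i,j)$ contain at most one nonzero (which is $+1$), so (\ref{eq:completion-cond3}) at $(i,j)$ reduces to: $A^{(k-1)}_{ij}=0$ whenever $(A_k)_{ij}=1$, and $A^{(k-1)}_{ij}=1$ whenever $(A_k)_{ij}=-1$. At positions where $F_n^{k,r,s}$ agrees with $F_n^k$, this is immediate from the identity $A^{(k-1)}+F_n^k=A^{(k)}$ together with $A^{(k)}\in\{0,1\}$ (Lemma \ref{lem:shared}). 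At positions zeroed in step (i) of the construction of $F_n^{k,r,s}$, $(A_k)_{ij}=0$ and nothing further is forced. At the step (iii) addition $(k+s,1)$ a direct inspection of the stripes of $F_n^j$ shows $(F_n^j)_{k+s,1}=0$ for every $j<k$, so $A^{(k-1)}_{k+s,1}=0$ as required. Once (\ref{eq:completion-cond3}) is in hand, Theorem \ref{thm:SASM-completion2} supplies permutation matrices $P_{k+1},\ldots,P_n$ so that $[P_1,\ldots,P_{k-1},A_k,P_{k+1},\ldots,P_n]$ is an ASHM.

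The main obstacle is the step (ii) additions, where $F_n^{k,r,s}$ places a new $+1$ at a position $(i,j)$ in the upper-right corner region $K_{n,k}$ and one must prove $A^{(k-1)}_{ij}=0$. Interpreting $A^{(k-1)}_{ij}$ as the partial sum, up through layer $k-1$, of the alternating vertical line of the diamond ASHM ${\mathfrak D}_n$ at $(i,j)$, the specific sequences of positions prescribed in step (ii) (depending on the parities of $p=n-k$ and $r$) are engineered so that this partial sum vanishes. Carrying out the resulting case-by-case verification, essentially the support argument already exploited in the proof of Theorem \ref{thm:all-integers}, completes the proof.
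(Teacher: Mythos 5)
Your proof is correct and starts from the same matrix $A_k=F^{k,r,s}_n$, but it produces the left-hand permutation matrices by a genuinely different route. The paper first runs the construction of Theorem \ref{thm:all-integers} to obtain an ASHM $A'=[F^1_n,\ldots,F^{k-1}_n,A_k,P_{k+1},\ldots,P_n]$, then deletes the diamond layers, notes that every vertical line of the remaining tail $[A_k,P_{k+1},\ldots,P_n]$ is $(*,1)$-alternating because it is the tail of an ASHM, reverses the order of these planes, and applies Theorem \ref{thm:SASM-completion2} a second time to prepend permutation matrices. That reversal trick exempts the paper from ever checking the alternating condition on $[P_1,\ldots,P_{k-1},A_k]$ directly. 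You instead take $P_1,\ldots,P_{k-1}$ from a Theorem \ref{thm:nonneg-decomp} decomposition of $F^1_n+\cdots+F^{k-1}_n$ (a $(0,1)$-matrix with constant line sums $k-1$ by Lemma \ref{lem:shared}) and verify condition (\ref{eq:completion-cond3}) by hand; your reduction of that condition to ``$A^{(k-1)}_{ij}=0$ where $(A_k)_{ij}=1$, and $A^{(k-1)}_{ij}=1$ where $(A_k)_{ij}=-1$'' is right, and the cases you settle explicitly (positions shared with $F^k_n$, via $A^{(k)}=A^{(k-1)}+F^k_n$ being a $(0,1)$-matrix, and the step (iii) position $(k+s,1)$) are handled cleanly. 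The one step you defer --- that $F^1_n,\ldots,F^{k-1}_n$ all vanish at the step (ii) positions --- is precisely the support fact the paper itself relies on (through Lemma \ref{lem:Fkrs} and the remark that $F^1_n,\ldots,F^{k-1}_n$ vanish on $K_{n,k}$) in proving Theorem \ref{thm:all-integers}, so you have not introduced a gap beyond the paper's own level of detail; but this is exactly the verification the paper's reversal argument buys its way out of, which is why the paper's proof is shorter, while yours is more self-contained and makes the structure of the left-hand decomposition explicit.
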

\begin{proof}
 We use the construction in the proof of Theorem \ref{thm:all-integers}. This shows that there is an ASM $A_k$ with $t$ negative entries and permutation matrices $P_{k+1}, \ldots, P_n$ such that 
 $A'=[F^1_n,\ldots,F^{k-1}_n,A_k,P_{k+1},\ldots,P_n]$ is an ASHM. 
 
 Consider the subhypermatrix $A''=[A_k,P_{k+1},\ldots,P_n]$ obtained from $A'$ by deleting the first $k-1$ horizontal planes. Then, by reversing the order of these planes, each vertical line is $(1,*)$-alternating, so we apply Theorem \ref{thm:SASM-completion2}, and there exist $n \times n$ permutation matrices $P_s$ ($1\le s\le k-1$) such that 
 \[
  A=[P_1,P_2,\ldots, P_{k-1},A_k,  P_{k+1}, P_{k+2}, \ldots, P_n]
\]
is an ASHM, as desired. 
\end{proof}

Recall that a matrix is {\it convex} provided the nonzeros  in each of its lines  are consecutive.
Examples of convex ASMs are the $n\times n$ ASMs $F_n^k$, the $k$th horizontal ASM of the diamond ASHM ${\mathfrak D}_n$. An $n\times n\times n$ hypermatrix is {\it convex} provided the nonzeros  in each of its three types of lines are consecutive. The diamond ASHM ${\mathfrak D}_n$ is convex.  We call an ASM {\em minus-convex} provided in every row and column all entries between two negative entries (if any) are nonzero, i.e., each of its rows and columns has the form
\[
   0, \ldots, 0, 1, 0, \ldots, 0, -1, 1, -1, 1, \ldots, -1, 0, \ldots, 0, 1, 0, \ldots, 0
\]
where the subsequences of zeros can be void.

\begin{theorem}\label{th:convex}
Let $B$ be an $n\times n$ matrix obtained from an $n\times n$ ASM $A$ 
by replacing some nonzeros with zeros in such a way that the result is convex.  Then $B$ can be completed to an $n\times n$ ASM $B'$ by changing some zeros to ones.
\end{theorem}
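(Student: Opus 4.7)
The plan is to construct the completion $B'$ explicitly as $B' = B + C$, where $C$ is a $(0,1)$-matrix whose support is disjoint from that of $B$. I begin by analyzing the structure of $B$: because $B$ is convex and each nonzero of $B$ agrees with a nonzero of $A$, each row and column of $B$ is either identically zero or a consecutive block of alternating $\pm 1$'s inherited from $A$. For each non-empty row, the first and last nonzero each equals $+1$ or $-1$, so there are four non-empty types plus the empty type, and correspondingly a ``deficit'' of $0$, $1$, or $2$ additional $+1$'s needed to make the row an ASM row (and analogously for columns). The total row-deficit equals the total column-deficit and equals $n-\sum_{ij} B_{ij}$.

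To construct $C$, I would use $A$'s structure as a canonical source of valid $+1$-placements. Concretely, if row $i$ of $B$ has first nonzero $-1$ at column $l_i$, then $A$'s row $i$ contains a $+1$ at some column $l_i' < l_i$ (namely the immediate $+1$-predecessor in $A$'s alternating row sequence), and $A_{i,l_i'}$ was necessarily zeroed out when forming $B$; I set $C_{i,l_i'} := 1$. I handle a right deficit symmetrically, and treat columns analogously. Empty rows and columns, together with the degenerate case $B = 0$, I would handle separately by noting that any permutation matrix of appropriate shape serves as a valid $B'$, using Theorem~\ref{thm:nonneg-decomp} applied to the residual $(0,1)$-matrix that records the empty positions.

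I would then verify $B'$ is an ASM by checking each line directly. In each row, adding $+1$'s at the deficient sides turns the pattern into $+1,-1,+1,\ldots,+1$ because the added $+1$ sits at the immediate $+1$-predecessor (or successor) in $A$'s alternating sequence, so no two consecutive nonzeros of the same sign occur; the row then sums to $1$. The same check goes through for columns, with an identical argument.

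The main obstacle will be establishing the simultaneous consistency of row and column deficits: the position $(i, l_i')$ chosen to fix row $i$'s left deficit must also be the valid placement for column $l_i'$'s corresponding deficit. I would address this by a convexity argument. The hypothesis $B_{i,l_i'}=0$ together with convexity of column $l_i'$ of $B$ forces column $l_i'$'s block to lie entirely below row $i$ and, because $A_{i,l_i'}=+1$ is its topmost nonzero in $A$'s column $l_i'$, the block of $B$ in column $l_i'$ must begin with $-1$; hence column $l_i'$ has precisely a left (top) deficit that demands a $+1$ at $(i,l_i')$. Thus a single entry of $C$ simultaneously satisfies a row and a column deficit. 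This pairing extends to all deficits, and the degenerate empty-line cases are absorbed into the initial reduction; together these yield the desired ASM completion $B'$.
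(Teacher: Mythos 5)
Your reduction of the problem to placing one new $+1$ per deficient line-end is sound, and your per-line analysis of what is needed is correct. But the step you yourself identify as the main obstacle is exactly where the argument fails: the local rule ``fix row $i$'s left deficit by a $1$ at the immediate $+1$-predecessor $(i,l_i')$ in $A$'s row'' does not yield a consistent set of placements, and the convexity argument you give for consistency is not valid --- there is no reason why $A_{i,l_i'}$ should be the topmost nonzero of column $l_i'$ of $A$, nor why the block of $B$ in that column must lie below row $i$ and begin with $-1$. Concretely, let $A=F_5^3$ and form $B$ by zeroing the nonzeros of $A$ in positions $(2,2),(3,1),(3,2),(4,2)$:
\[
B=\left[\begin{array}{rrrrr}
0&0&1&0&0\\
0&0&-1&1&0\\
0&0&1&-1&1\\
0&0&-1&1&0\\
0&0&1&0&0
\end{array}\right].
\]
This $B$ is convex. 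Rows $2$ and $4$ both have a left deficit, and for both the immediate $+1$-predecessor in $A$ lies in column $2$; your rule therefore places new $1$'s at $(2,2)$ and $(4,2)$, making column $2$ sum to $2$ while column $1$ stays empty. A completion does exist (add $1$'s at $(2,1)$ and $(4,2)$, say), but finding it requires a global assignment of deficient rows to available columns, not a rule read off row by row from $A$. Your separate treatment of empty lines via a permutation matrix does not repair this, since here the required $1$'s sit in nonempty rows and empty columns simultaneously.

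The paper avoids this difficulty by never choosing the added $1$'s independently: it defines a \emph{corner transform}, which deletes one extremal $-1$ of the current ASM together with its two neighbouring $+1$'s and inserts a single compensating $+1$ at the fourth corner of that little rectangle, and it obtains $B'$ from $A$ by a sequence of such transforms, each of which preserves the ASM property and removes one of the $-1$'s of $A$ not present in $B$. To salvage your approach you would need either to adopt such a sequential scheme or to replace the local rule by a genuine matching argument (in the spirit of Hall's theorem) for assigning the added $1$'s to rows and columns simultaneously.
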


Before giving a proof of Theorem \ref{th:convex}, which provides a simple algorithm to obtain $B'$, we illustrate the theorem with an example.

\begin{example}
\label{ex:convex}
{\rm
 Consider the ASM
\[A=F_{10}^{5}=\left[\begin{array}{r|r|r|r|r|r|r|r|r|r}
&&&&\cellcolor{blue!25}1&&&&&\\ \hline
&&&\cellcolor{blue!25}1&\cellcolor{blue!25}-1&\cellcolor{blue!25}1&&&&\\ \hline
&&\cellcolor{blue!25}1&\cellcolor{blue!25}-1&1&-1&\cellcolor{blue!25}1&&&\\ \hline
&\cellcolor{blue!25}1&-1&1&-1&1&\cellcolor{blue!25}-1&\cellcolor{blue!25}1&&\\ \hline
\cellcolor{blue!25}1&\cellcolor{blue!25}-1&1&-1&1&-1&1&-1&\cellcolor{blue!25}1&\\ \hline
&1&-1&1&-1&1&-1&1&-1&\cellcolor{blue!25}1\\ \hline
&&1&-1&1&-1&1&-1&1&\\ \hline
&&&1&-1&1&-1&\cellcolor{blue!25}1&&\\ \hline
&&&&ç1&\cellcolor{blue!25}-1&\cellcolor{blue!25}1&&&\\ \hline
&&&&&\cellcolor{blue!25}1&&&&\end{array}\right]\]
where the shaded entries are to be replaced with zeros to obtain an $10\times 10$ convex matrix $B$. Then a completion of $B$ to an ASM obtained by changing certain zeros to ones is
\[B'=\left[\begin{array}{r|r|r|r|r|r|r|r|r|r}
&&&&&1&&&&\\ \hline
&&1&&&&&&&\\ \hline
&&&&1&-1&&1&&\\ \hline
1&&-1&1&-1&1&&&&\\ \hline
&&1&-1&1&-1&1&-1&1&\\ \hline
&1&-1&1&-1&1&-1&1&-1&1\\ \hline
&&1&-1&1&-1&1&-1&1&\\ \hline
&&&1&-1&1&-1&1&&\\ \hline
&&&&1&&&&&\\ \hline
&&&&&&1&&&\end{array}\right].\]}
\end{example}

\begin{proof} (of Theorem \ref{th:convex})
If $B$ does not have any $-1$'s (in particular, if $A$ does not have any $-1$'s), then $B$ consists of $p\ge 0$\ 1's no two on the same line, and $B$ can be completed to a permutation matrix and hence an ASM. We now assume that $B$ contains at least one $-1$. To proceed we consider the following transformation of an ASM.

Let $A=[a_{ij}]$  be an ASM with at least one negative entry. Choose $(k,l)$ with $k+l$ minimal such that $a_{kl}=-1$. Then there is  a unique $l'<l$ with $a_{kl'}$ nonzero, and we have $a_{kl'}=1$. Similarly, there is  a unique $k'<k$ with $a_{k'l}$ nonzero and $a_{k'l}=1$. By the choice of $k, l$ there is no negative entry in the leading $k \times l$ submatrix, apart from $a_{kl}$. Moreover, $a_{k'l'}=0$. Let $C=[c_{ij}]$ be obtained from $A$ by letting $c_{kl}=c_{k'l}=c_{kl'}=0$,  $c_{k'l'}=1$, and, otherwise, $c_{ij}=a_{ij}$. Then $C$ is an ASM and $\sigma_-(C)=\sigma_-(A)-1$. This transformation makes a change in the upper-left corner of the matrix $A$. Clearly a similar transformation may be performed in the upper-right, lower-left or lower-right corner of $A$. Each of these will be called a {\em corner transform} of an ASM. 

Note that if $A$ is a minus-convex ASM and $C$ is obtained from $A$ by a corner transform, then $C$ is also minus-convex. 

The theorem follows by a sequence of corner transforms:  Let $A$ and $B$ be as stated in the theorem. If $B$ has a $-1$ wherever $A$ has a $-1$, then we let $B'=A$. Now assume that there is at least one position in which $A$ has a $-1$ and $B$ has a $0$.  Choosing $(k,l)$ as above, we use a corner transformation and obtain a minus-convex ASM which has a $-1$ in every position that $B$ has a $-1$. Proceeding inductively  we obtain the desired ASM $B'$.
\end{proof}

\begin{example}\label{ex:convex2}{\rm 
Let
\[A=F_5^3=\left[\begin{array}{r|r|r|r|r}
&&\cellcolor{blue!25}1&&\\ \hline
&\cellcolor{blue!25}1&\cellcolor{blue!25}-1&1&\\ \hline
\cellcolor{blue!25}1&\cellcolor{blue!25}-1&1&-1&1\\ \hline
&\cellcolor{blue!25}1&\cellcolor{blue!25}-1&1&\\ \hline
&&\cellcolor{blue!25}1&&\end{array}\right]\]
where the shaded entries are to be replaced with zeros to obtain a convex matrix $B$. Then using corner transformations we obtain an ASM:
\[A\rightarrow \left[\begin{array}{r|r|r|r|r}
&1&&&\\ \hline
&&&1&\\ \hline
1&-1&1&-1&1\\ \hline
&1&-1&1&\\ \hline
&&1&&\end{array}\right]\rightarrow
\left[\begin{array}{r|r|r|r|r}
1&&&&\\ \hline
&&&1&\\ \hline
&&1&-1&1\\ \hline
&1&-1&1&\\ \hline
&&1&&\end{array}\right]\rightarrow
\left[\begin{array}{r|r|r|r|r}
1&&&&\\ \hline
&&&1&\\ \hline
&&1&-1&1\\ \hline
&&&1&\\ \hline
&1&&&\end{array}\right]=B'.
\]
}
\end{example} \endproof

\medskip
We now study another completion problem, where an ASM is given and we want to extend it to an ASHM in some way.
First, we introduce a generalization of an ASM. Consider a  $(0, \pm 1)$-matrix with all line sums equal to  1; such a matrix will be called a {\em semi-ASM}. Thus, if a line contains $p$ $-1$'s, then it has $(p+1)$\  $+1$'s. If $A$ is a PASHM, then 
its row-vertical-planes and column-vertical-planes are semi-ASMs.

The semi-ASMs have the following simple property. With a semi-ASM $A=[a_{ij}]$ of order $n$ we associate the bipartite graph $G_0(A)$ of $A$ with $n$ vertices in each color class and having an edge $ij$ whenever $a_{ij}=0$; thus the edges correspond to the zeros and not, as is often the case, to the nonzeros. If $G_0(A)$ has a cycle (which must be even), then by  putting $+1$ and $-1$ in the positions of $A$ of this cycle (alternating), we obtain a new matrix $B$ from $A$. Then $B$ is also a semi-ASM and has more nonzeros than $A$.

If $A$ and $B$ are semi-ASMs of order $n$ and  $B$ agrees in every nonzero entry of $A$, we say that $B$ is a {\em semi-ASM extension} of $A$. Thus, the cycle construction above produces an semi-ASM extension $B$ of $A$, and we call this a {\em cycle-extension}.

\begin{theorem}
 \label{thm:semi-ASM-extension}
 Let  $A$ be a semi-ASM of order $n$ where $n$ is odd. Then $A$ has a semi-ASM extension $B$ with all entries nonzero.
\end{theorem}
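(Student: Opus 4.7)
The plan is to apply the cycle-extension operation (defined just before the theorem) repeatedly, using a parity argument to guarantee that a cycle is always available in $G_0$ until no zeros remain.

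First I would establish a parity observation: since $n$ is odd and each row of the semi-ASM $A$ sums to $1$, if row $i$ contains $p_i$ entries equal to $-1$, it must contain $p_i+1$ entries equal to $+1$, hence $2p_i+1$ nonzeros and $n-(2p_i+1)$ zeros. Since $n$ is odd, this count of zeros is even. The same applies to columns. Thus every vertex of the bipartite zero-graph $G_0(A)$ has even degree.

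Next, in any graph in which every vertex has even degree, every connected component with at least one edge contains a cycle, since a tree has a leaf (a vertex of odd degree $1$). So as long as $A$ has any zero entry, $G_0(A)$ contains a (necessarily even) cycle $C$. I would then perform a cycle-extension along $C$: traversing $C$ and placing alternating $+1$ and $-1$ at the corresponding zero positions produces a $(0,\pm 1)$-matrix $A'$. Because the cycle alternates between row-vertices and column-vertices and uses exactly two edges at each incident vertex, every row and column meeting $C$ gains one $+1$ and one $-1$; so the line sums are preserved and $A'$ is again a semi-ASM. Moreover each row/column on $C$ loses exactly two zeros, so the new graph $G_0(A')$ still has all vertex degrees even, keeping the parity hypothesis intact for the next iteration.

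The iteration must terminate because the number of zeros strictly decreases at each step, and it can only terminate when $G_0$ is edgeless, i.e., when the final matrix $B$ has all entries nonzero. The only delicate point, which is the one I would double-check, is the invariance of the even-degree property under cycle-extension; but as noted, this is immediate from the structure of cycles in bipartite graphs. The oddness of $n$ is used precisely once, to start this parity invariant off.
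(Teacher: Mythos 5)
Your proof is correct and follows essentially the same route as the paper's: the parity count showing every vertex of $G_0(A)$ has even degree (hence a cycle exists whenever zeros remain), followed by repeated cycle-extensions until no zeros are left. The paper simply re-derives the parity at each step from the fact that the new matrix is again a semi-ASM of odd order, rather than tracking the even-degree invariant explicitly as you do, but the argument is the same.
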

\begin{proof} The proof of this theorem uses a standard kind of argument.
 Assume $A$ has at least one zero entry (otherwise we are done). Let $L$ denote the set of lines in $A$ that contain at least one zero entry. Since $A$ is a semi-ASM, each line has an odd number of nonzero entries. Therefore, as $n$ is odd, each line in $L$ has a positive even number  of zeros. Consider the subgraph of $G_0(A)$ induced by the vertices corresponding to $L$ (so we have just removed isolated vertices, that is, vertices corresponding to lines of all 1's). In this subgraph each vertex has an even, nonzero degree, and therefore the subgraph  contains a cycle. Now, use this cycle and perform a cycle-extension of $A$. This gives a new matrix $A'$ with fewer zeros than $A$, and $A'$ is a semi-ASM. We may repeat this process, and get a sequence of semi-ASMs with fewer zeros, and eventually we obtain a semi-ASM $B$ with no zeros, as desired.
\end{proof}
 
The next example shows that the property in Theorem  \ref{thm:semi-ASM-extension} does not hold in general when $n$ is even.

\begin{example}
 \label{ex:even-extension}
{\rm
Let $n=4$ and let the semi-ASM $A$ be given by 
\[
A=
\left[
 \begin{array}{rrrr}
    0 & 0 & 0 &1 \\
    1 & -1 & 1 & 0 \\
    1 & 1 & -1 & 0 \\
    -1 & 1 & 1 & 0 \\
 \end{array} 
 \right].
\]
To find an extension, in the first row, we need to change some 0 to $-1$ and another 0 to $+1$, but then we violate that all column sums should be 1. So a semi-ASM extension  does not exist.
\endproof
}
\end{example}

We say that two $n\times n$ $(0,1)$-matrices $A_1$ and $A_2$  are {\em disjoint} provided that their supports are disjoint, that is, provided that $A_1+A_2$ is also a $(0,1)$-matrix.

\begin{corollary}
 \label{cor:odd-ext-perm}
 Let $n=2k+1$ be odd, and let $A$ be a semi-ASM of order $n$. Then there is a decomposition of $J_n$ into pairwise disjoint permutation matrices $P_1, P_2, \ldots, P_n$ such that the ones in $P_i$ $(i \le k)$ cover all the $-1$'s in $A$, and the ones in $P_i$ $(k < i \le n)$  cover all the $1$'s in $A$. 
\end{corollary}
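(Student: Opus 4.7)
The plan is to derive the corollary quickly from Theorem \ref{thm:semi-ASM-extension} together with the classical decomposition result Theorem \ref{thm:nonneg-decomp}. I would proceed as follows.

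First, since $n=2k+1$ is odd, I apply Theorem \ref{thm:semi-ASM-extension} to extend $A$ to a semi-ASM $B$ of order $n$ in which every entry is nonzero, with $B$ agreeing with $A$ in every nonzero position of $A$. Next, I count: in each line of $B$ every entry is $\pm 1$, the line has $n=2k+1$ entries, and its sum is $1$. Writing $p$ for the number of $-1$'s in the line gives $(n-p)-p=1$, so $p=k$. Hence each line of $B$ has exactly $k$ $-1$'s and exactly $k+1$ $+1$'s.

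Now let $N$ be the $(0,1)$-matrix whose support is the set of $-1$-positions of $B$, and let $M=J_n-N$, so the support of $M$ is the set of $+1$-positions of $B$. Then $N$ has every row sum and every column sum equal to $k$, while $M$ has every row sum and every column sum equal to $k+1$. Applying Theorem \ref{thm:nonneg-decomp} to $N$ and $M$ separately, I obtain permutation matrices $P_1,\ldots,P_k$ and $P_{k+1},\ldots,P_n$ such that
\[
     N=P_1+\cdots+P_k, \qquad M=P_{k+1}+\cdots+P_n.
\]
Since $N$ and $M$ have disjoint supports and $N+M=J_n$, the $n$ permutation matrices $P_1,\ldots,P_n$ are pairwise disjoint and sum to $J_n$.

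Finally, I verify the covering property. Each $-1$ of $A$ is (by the extension property) also a $-1$ of $B$, hence lies in the support of $N=P_1+\cdots+P_k$; so the ones of $P_1,\ldots,P_k$ cover all the $-1$'s of $A$. Similarly, each $+1$ of $A$ is a $+1$ of $B$, hence lies in the support of $M=P_{k+1}+\cdots+P_n$. This gives exactly the required decomposition. There is no real obstacle here beyond invoking the two cited results; the only point worth emphasizing is the parity observation that forces the $-1$-count per line of $B$ to be exactly $k$, which is where oddness of $n$ is genuinely used (via the hypothesis of Theorem \ref{thm:semi-ASM-extension}).
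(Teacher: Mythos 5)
Your proof is correct and follows essentially the same route as the paper: extend $A$ to an everywhere-nonzero semi-ASM $B$ via Theorem \ref{thm:semi-ASM-extension}, split $B$ into the $(0,1)$-matrices supported on its $-1$'s (line sums $k$) and its $+1$'s (line sums $k+1$), and decompose each by Theorem \ref{thm:nonneg-decomp}. The only difference is that you spell out the parity count and the covering verification, which the paper leaves implicit.
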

\begin{proof} 
 Apply Theorem \ref{thm:semi-ASM-extension} to $A$, and let $B$ be an extension with all entries nonzero. The $(0,1)$-matrix $B_1$ obtained from $B$ by replacing each $1$ by $0$, and multiplying the matrix by $-1$ has all line sums equal to $k$, and can therefore be written as the sum of $k$ pairwise disjoint permutation matrices. Similarly, the $(0,1)$-matrix $B_2$ obtained from $B$ by replacing each $-1$ by $0$ has all line sums equal to $k+1$, and can therefore be written as the sum of $k+1$ pairwise disjoint permutation matrices. These $n$ permutation matrices are pairwise disjoint, so the result follows.  
\end{proof}

We now use the previous results to solve the  completion problem mentioned above, where we want to extend a given ASM into an ASHM.

\begin{theorem}
 \label{thm:central-ASM-extension}
 Let  $B$ be an ASM of order $n$ where $n=2k+1$ is odd. Then there are $n-1$ permutation matrices $P_i$ $(i \le n, i \not =k)$ such that 
 \[
  A=
[P_1,\ldots, P_k, B, P_{k+1}, \ldots, P_n]
\]
 is an ASHM.
\end{theorem}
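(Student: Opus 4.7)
The plan is to build $A$ in two stages by combining Corollary \ref{cor:odd-ext-perm} with Theorem \ref{thm:SASM-completion2}, with $B$ placed at the central horizontal layer of the ASHM so that $k$ permutation matrices sit below $B$ and $k$ sit above. Since $B$ is an ASM and therefore in particular a semi-ASM, Corollary \ref{cor:odd-ext-perm} applied to $B$ produces pairwise disjoint permutation matrices $Q_1, Q_2, \ldots, Q_n$ summing to $J_n$ such that the $1$'s of $Q_1, \ldots, Q_k$ cover all $-1$'s of $B$ while the $1$'s of $Q_{k+1}, \ldots, Q_n$ cover all $+1$'s of $B$. I would then set $P_s := Q_s$ for $s = 1, 2, \ldots, k$ and form the partial hypermatrix $H := [P_1, P_2, \ldots, P_k, B]$ of shape $n \times n \times (k+1)$.

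The central step is to verify condition (\ref{eq:completion-cond3}), namely that every vertical line of $H$ is $(1,*)$-alternating. Fix a position $(i,j)$ and split according to the value of $B(i,j)$. If $B(i,j) = -1$, the corollary gives a unique $s \le k$ with $P_s(i,j) = 1$, so the line reads $(0, \ldots, 0, 1, 0, \ldots, 0, -1)$. If $B(i,j) = +1$, the $+1$ is covered by some $Q_s$ with $s \ge k+1$, so by pairwise disjointness none of $P_1, \ldots, P_k$ is $1$ at $(i,j)$ and the line reads $(0, \ldots, 0, 1)$. If $B(i,j) = 0$, at most one of $P_1, \ldots, P_k$ can be $1$ at $(i,j)$, so the line has at most one nonzero, which is a $+1$. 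In every case the line is $(1,*)$-alternating.

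Since $n = 2k+1 \ge 3$ (the case $n = 1$ being immediate), we have $k+1 < n$, so Theorem \ref{thm:SASM-completion2} applied to $H$ produces permutation matrices $P_{k+2}, \ldots, P_n$ for which $A = [P_1, \ldots, P_k, B, P_{k+2}, \ldots, P_n]$ is an $n \times n \times n$ ASHM, as required. I anticipate no serious obstacle; the one delicate point is the placement of $B$ at the central layer, which balances the construction. The row and column sums of $P_1 + \cdots + P_k$ must equal $k$, and this matches exactly the number of permutation matrices the corollary allots to covering the $-1$'s of $B$; any off-central placement would force the inequality $k' \ge \alpha_i$ (with $k'$ the number of layers on the smaller side of $B$ and $\alpha_i$ the number of $-1$'s in row $i$ of $B$) to fail whenever $B$ has a row with the maximum value $\alpha_i = k$.
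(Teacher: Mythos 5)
Your proposal is correct and follows essentially the same route as the paper: apply Corollary \ref{cor:odd-ext-perm} to $B$ to obtain $k$ pairwise disjoint permutation matrices covering the $-1$'s of $B$ and none of its $+1$'s, check that the vertical lines of $[P_1,\ldots,P_k,B]$ are $(1,*)$-alternating, and then invoke Theorem \ref{thm:SASM-completion2} to supply the remaining layers. Your explicit three-case verification of the alternating property and your count of the upper layers are in fact slightly more careful than the paper's own write-up, which compresses the alternation check into the single observation that $P_1+\cdots+P_k+B$ is a $(0,1)$-matrix.
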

\begin{proof}
 First, by   Corollary \ref{cor:odd-ext-perm}  there exist pairwise disjoint permutation matrices $P_1, P_2, \ldots, P_k$ whose ones cover all the $-1$'s in $B$, but do not cover any of the $1$'s in $B$. Therefore the matrix 
 \[
    C=P_1+ \cdots + P_k+B
\]
is a $(0,1)$-matrix. Consider the $n \times n \times (k+1)$ hypermatrix $A'=[a_{ijs}]=[P_1,  \ldots,  P_k, B]$.  As $C$ is a $(0,1)$ matrix and the $P_i$'s are pairwise disjoint, each of the vertical  lines $(a_{ij1}, \ldots, a_{ij,k+1})$ is alternating ($i,j \le n$). So,  (\ref{eq:completion-cond3}) holds with $k$ replaced by $k+1$, and by   Theorem \ref{thm:SASM-completion2}  there are permutation matrices $P_{k+1}, P_{k+2}, \ldots, P_n$
such that 
$A=[P_1,\ldots, P_k, B, P_{k+1}, \ldots, P_n]$ is an ASHM.
\end{proof}

\begin{example}{\rm 
Let $n=3$ and 
\[
B=\left[\begin{array}{rrr}
0&1&0\\
1&-1&1\\
0&1&0\end{array}\right]
\]
Then, in Theorem \ref{thm:central-ASM-extension}, we may let $P_1$ be the identity matrix (it covers the $-1$, but none of the ones in $B$), and this gives the following ASHM with $B$ as the middle plane:
\[
A:=\left[\begin{array}{ccc}
1&0&0\\
0&1&0\\
0&0&1\end{array}\right]\nearrow
\left[\begin{array}{rrr}
0&1&0\\
1&-1&1\\
0&1&0\end{array}\right]\nearrow
\left[\begin{array}{ccc}
0&0&1\\
0&1&0\\
1&0&0\end{array}\right].
\]  \endproof
}
\end{example}

We now consider the case of even $n$. First we make a general definition. Let $B_1$ and $B_2$ be $n\times n$ $(0,1,-1)$-matrices.
We define $B_1$ and $B_2$ to be {\it sign-disjoint} provided $B_1+B_2$ is also a $(0,1,-1)$-matrix. Thus $B_1$ and $B_2$ are sign-disjoint if and only if they have neither 1's  in the same position nor $-1$'s in the same position. If $B_1$ and $B_2$ are $(0,1)$-matrices, then $B_1$ and $B_2$ are sign-disjoint if and only if they are disjoint as previously defined.  We also observe that if $A=[A_1,A_2,\ldots,A_n]$ is an $n\times n\times n$  ASHM, then $A_i$ and $A_{i+1}$ are sign-disjoint for each $i=1,2,\ldots,n-1$.

\begin{lemma}\label{lem:evendisjoint} Let $n$ be an even integer, and
let $B_1$ and $B_2$ be $n\times n$ sign-disjoint ASMs. Then the line sums of $B_1+B_2$ all equal $2$, and the  number of zeros in each row and column of  $B_1+B_2$ is even.
\end{lemma}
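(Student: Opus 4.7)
The plan is to prove the two claims essentially directly from the definitions, with a short parity argument for the second one.

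First I would observe that the line-sum claim is immediate: since $B_1$ and $B_2$ are each ASMs, every row sum and every column sum of $B_i$ equals $1$, so every line sum of $B_1+B_2$ equals $2$.

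Next I would pin down the possible entries of $B_1+B_2$. By the definition of sign-disjoint, there is no position where both $B_1$ and $B_2$ have a $+1$, and no position where both have a $-1$. Enumerating the possible pairs $(B_1)_{ij}, (B_2)_{ij} \in \{-1,0,1\}$ that are allowed, every entry of $B_1+B_2$ lies in $\{-1,0,1\}$ (the would-be values $\pm 2$ are precisely those ruled out by sign-disjointness).

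Now fix any row of $B_1+B_2$, and let $a$, $b$, $c$ denote the number of entries equal to $+1$, $-1$, $0$ respectively in that row. Then
\[
   a+b+c=n \quad \text{and} \quad a-b=2,
\]
the second equality being the row sum established above. Subtracting, $(a+b) = (a-b) + 2b = 2 + 2b$ is even, and since $n$ is even by hypothesis, $c = n-(a+b)$ is even as well. The same argument applies to columns, completing the proof.

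The main (and only) content is the parity step in the last paragraph; there is no genuine obstacle, as the sign-disjointness hypothesis is exactly what is needed to keep the entries in $\{-1,0,1\}$ and make the counting argument go through.
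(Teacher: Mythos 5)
Your proof is correct. It differs from the paper's in the parity step: the paper counts the nonzeros of a row of $B_1+B_2$ as $n_1+n_2-2c$, where $n_1$ and $n_2$ are the (necessarily odd) numbers of nonzeros in the corresponding rows of $B_1$ and $B_2$ and $c$ is the number of positions where a $+1$ meets a $-1$; evenness then comes from the fact that each line of an ASM has an odd number of nonzeros. You instead work entirely inside the sum matrix: since sign-disjointness means by definition that $B_1+B_2$ is a $(0,\pm 1)$-matrix, the counts $a$ and $b$ of $+1$'s and $-1$'s in a line satisfy $a-b=2$, hence $a+b\equiv a-b\equiv 0\pmod 2$, and the zero count $n-(a+b)$ is even. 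Your route is slightly more economical, needing only the already-established line sum and the definition of sign-disjoint rather than the odd-nonzero-count property of ASMs, and it would apply verbatim to any pair of $(0,\pm1)$-matrices with integer line sums of the same parity as $n$; the paper's version makes the cancellation mechanism between the two ASMs more explicit. Either argument is complete and correct.
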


\begin{proof} Since $B_1$ and $B_2$ are ASMs, all of their line sums equal 1, and hence all of the line sums of $B_1+B_2$ equal 2.
Also, since $B_1$ and $B_2$ are ASMs, each contains an odd number of nonzeros in each row and column. 
Consider some  row (or column) $i$ of $B_1$ and $B_2$. Let the number of nonzero positions in row $i$ of $B_1$
(respectively, $B_2$) be $n_1$ (respectively, $n_2$), and let $c$ be  the number of  positions in row $i$ in which $B_1$ has a $1$ and $B_2$ has a $-1$ or the other way around. Then the number of nonzeros in row $i$ of $B_1+B_2$ equals $n_1+n_2-2c$, an even number. Since $n$ is also even, the number of zeros in row $i$ of $B_1+B_2$ is even.\end{proof}

\begin{corollary}\label{cor:semi-ASM-extension-even}
Let $n$ be an even integer, and
let $B_1$ and $B_2$ be $n\times n$ sign-disjoint ASMs. Then there is a  $(0,1,-1)$-matrix $B$ which extends $B_1+B_2$ with all entries nonzero and all line sums equal to $2$. Moreover, there are pairwise disjoint  permutation matrices $P_1,P_2,\ldots,P_n$ such that the permutation matrices $P_1,\ldots,P_{\frac{n}{2}-1}$ cover all the $-1$'s of $B_1+B_2$, and the permutation matrices $P_{\frac{n}{2}},\ldots,P_n$ cover all the $1$'s of $B_1+B_2$.
\end{corollary}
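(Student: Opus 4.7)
The plan is to mimic the cycle-extension argument of Theorem~\ref{thm:semi-ASM-extension}, adapted for the line-sum-$2$ setting, and then apply the nonnegative-matrix decomposition (Theorem~\ref{thm:nonneg-decomp}) to the resulting sign-filled matrix.

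First I would produce the extension $B$. Let $M=B_1+B_2$; by sign-disjointness, $M$ is a $(0,1,-1)$-matrix, and by Lemma~\ref{lem:evendisjoint} all its line sums equal $2$ and each line has an even number of zeros. Form the bipartite graph $G_{0}(M)$ associated with $M$, restricted to rows and columns that contain at least one zero; every such vertex has even and strictly positive degree, hence degree $\geq 2$, which forces a cycle $C$. Alternate the values $+1,-1,+1,-1,\ldots$ along the edges of $C$ and leave all other entries of $M$ unchanged. Each row or column touching $C$ meets it in an even number of positions carrying alternating signs, so the line sums are preserved, the new matrix is again a $(0,1,-1)$-matrix with all line sums $2$, and each line sum of zeros changes by an even amount, so the parity property is maintained. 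Iterating reduces the zero count strictly each time, yielding in finitely many steps a matrix $B$ with no zero entry, all of whose line sums are $2$, and which agrees with $M$ wherever $M$ is nonzero. This gives the first assertion.

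Next I would decompose $B$. Since $B$ has entries in $\{+1,-1\}$ and every line sum equals $2$, each line contains exactly $n/2+1$ entries equal to $+1$ and $n/2-1$ entries equal to $-1$. Let $B^{+}$ be the $(0,1)$-matrix of positions where $B=+1$ and $B^{-}$ the $(0,1)$-matrix of positions where $B=-1$. Then $B^{+}$ has every line sum equal to $n/2+1$ and $B^{-}$ has every line sum equal to $n/2-1$, so by Theorem~\ref{thm:nonneg-decomp} we may write
\[
B^{-}=P_{1}+\cdots+P_{n/2-1}, \qquad B^{+}=P_{n/2}+\cdots+P_{n}
\]
with each $P_i$ a permutation matrix. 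The $P_i$ within each group are pairwise disjoint because the corresponding sum is a $(0,1)$-matrix, and the two groups are disjoint from each other because the supports of $B^{+}$ and $B^{-}$ are disjoint. Finally, because $B$ extends $M=B_1+B_2$, the $-1$'s of $B_1+B_2$ lie in the support of $B^{-}$ and are therefore covered by $P_1,\ldots,P_{n/2-1}$, while the $+1$'s of $B_1+B_2$ lie in the support of $B^{+}$ and are covered by $P_{n/2},\ldots,P_n$, as required.

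The only delicate point is the cycle-extension step, specifically that after each extension the parity property ``each line contains an even number of zeros'' is preserved so the induction can continue; but this is immediate because any simple cycle in the zero-bipartite graph meets each line in an even number of edges. Everything else is bookkeeping, so I do not anticipate a substantial obstacle.
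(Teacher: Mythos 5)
Your proposal is correct and follows essentially the same route as the paper, which simply states that the corollary ``follows as in the proofs of Theorem~\ref{thm:semi-ASM-extension} and Corollary~\ref{cor:odd-ext-perm}''; you have filled in exactly those two ingredients (the cycle-extension using the even-zero-parity from Lemma~\ref{lem:evendisjoint}, then the split into $B^{+}$ and $B^{-}$ with Theorem~\ref{thm:nonneg-decomp}) correctly.
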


\begin{proof} The proof follows as in  the proofs of Theorem
\ref{thm:semi-ASM-extension} and  Corollary \ref{cor:odd-ext-perm}.\end{proof}

\begin{theorem}
 \label{thm:central-ASM-extension-even} 
Let $B_1$ and $B_2$ be $n\times n$ sign-disjoint ASMs where $n=2k$ is even. Then there are $n-2$ permutation matrices $P_i$ $(1\le i\le n, i\ne \frac{n}{2},\frac{n}{2}+1)$ such that
\[A=[P_1,\ldots,P_{\frac{n}{2}-1},B_1,B_2,P_{\frac{n}{2}+2},\ldots,P_n]\]
is an ASHM.
\end{theorem}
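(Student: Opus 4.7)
The plan is to mirror the proof of the odd-case Theorem~\ref{thm:central-ASM-extension}, using the even-case extension Corollary~\ref{cor:semi-ASM-extension-even} to supply the permutation matrices below the two central ASMs and then invoking Theorem~\ref{thm:SASM-completion2} to fill in everything above.

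First I would apply Corollary~\ref{cor:semi-ASM-extension-even} to the sign-disjoint pair $(B_1,B_2)$. This yields pairwise disjoint permutation matrices $P_1,P_2,\ldots,P_n$ (forming a decomposition of $J_n$) such that $P_1,\ldots,P_{n/2-1}$ cover every position where $B_1+B_2=-1$, while $P_{n/2},\ldots,P_n$ cover every position where $B_1+B_2=+1$. In particular, setting $Q=P_1+\cdots+P_{n/2-1}$, the $(0,1)$-matrix $Q$ has support containing every $-1$-position of $B_1+B_2$ and disjoint from every $+1$-position (by pairwise disjointness of the $P_s$).

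The second step is to form the $n\times n\times(n/2+1)$ partial hypermatrix $A'=[P_1,\ldots,P_{n/2-1},B_1,B_2]$ and verify that it satisfies the $(1,*)$-alternating condition (\ref{eq:completion-cond3}) required by Theorem~\ref{thm:SASM-completion2}. This reduces to a case analysis on the value of $(B_1(i,j),B_2(i,j))$ at each position, exploiting sign-disjointness. When $B_1(i,j)+B_2(i,j)=+1$, we have $Q(i,j)=0$ and the vertical line has the form $(0,\ldots,0,+1,0)$ or $(0,\ldots,0,0,+1)$; when $B_1(i,j)+B_2(i,j)=-1$, we have $Q(i,j)=1$ and the line is either $(0,\ldots,0,+1,0,\ldots,0,-1,0)$ or $(0,\ldots,0,+1,0,\ldots,0,0,-1)$; when both entries are zero, $Q(i,j)\in\{0,1\}$ is harmless. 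The two remaining ``cancelling'' cases $(B_1,B_2)(i,j)=(+1,-1)$ and $(-1,+1)$ force $Q(i,j)=0$ and $Q(i,j)=1$, respectively, in order for the line to be $(1,*)$-alternating.

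The final step is to apply Theorem~\ref{thm:SASM-completion2} with $k=n/2+1<n$ to $A'$, obtaining permutation matrices $P_{n/2+2},\ldots,P_n$ such that
\[
 A=[P_1,\ldots,P_{n/2-1},B_1,B_2,P_{n/2+2},\ldots,P_n]
\]
is an ASHM, as desired.

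The main obstacle lies in the second step, in the two cancelling cases. These impose sign requirements on the cycle-extension used inside the proof of Corollary~\ref{cor:semi-ASM-extension-even}, namely that the extension must assign the value $+1$ at $(+1,-1)$ positions and $-1$ at $(-1,+1)$ positions of $(B_1,B_2)$. Establishing that the cycle-extension procedure can always be arranged to respect these sign constraints---equivalently, that a $(0,1)$-matrix $Q$ with line sums $n/2-1$ exists whose support contains the forced positions and avoids the forbidden ones---is the delicate part and is where I would expect the argument to need the most care.
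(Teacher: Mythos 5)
Your proposal retraces the paper's own proof step for step: Corollary~\ref{cor:semi-ASM-extension-even} supplies $P_1,\ldots,P_{n/2-1}$ below the pair $(B_1,B_2)$, and Theorem~\ref{thm:SASM-completion2} fills in the planes above. Your case analysis of the vertical lines is correct, and the ``delicate part'' you single out at the end is not merely delicate --- it is a genuine gap, and it cannot be closed. Corollary~\ref{cor:semi-ASM-extension-even} constrains $Q=P_1+\cdots+P_{n/2-1}$ only on the positions where $B_1+B_2\ne 0$; at a cancelling position the alternating condition on the vertical line additionally forces $Q=0$ where $(B_1,B_2)=(+1,-1)$ and $Q=1$ where $(B_1,B_2)=(-1,+1)$, exactly as you observe, and these extra constraints can be infeasible. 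Take $n=6$ and
\[
B_1=\left[\begin{array}{rrrrrr}
0&0&0&1&0&0\\
1&0&0&0&0&0\\
0&1&0&-1&1&0\\
0&0&1&0&0&0\\
0&0&0&1&0&0\\
0&0&0&0&0&1
\end{array}\right],\qquad
B_2=\left[\begin{array}{rrrrrr}
0&1&0&0&0&0\\
0&0&0&0&1&0\\
1&-1&1&0&-1&1\\
0&0&0&1&0&0\\
0&1&0&0&0&0\\
0&0&0&0&1&0
\end{array}\right].
\]
Both are ASMs and $B_1+B_2$ is a $(0,\pm 1)$-matrix, so they are sign-disjoint. In row $3$ the pairs $(B_1(3,j),B_2(3,j))$ are $(0,1),(1,-1),(0,1),(-1,0),(1,-1),(0,1)$: the first nonzero of the pair is $+1$ in five positions, forcing $Q(3,j)=0$ there, and $-1$ in one position, forcing $Q(3,4)=1$. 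Hence row $3$ of $Q$ must sum to $1$, while $Q=P_1+P_2$ with $P_1,P_2$ permutation matrices has every row sum equal to $2$. So no admissible $P_1,P_2$ exist, and the statement itself fails for this pair.

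You should not read this as a defect of your write-up relative to the paper: the published proof is the same argument and passes over the cancelling positions in silence, so it contains the identical gap; your contribution is precisely to have isolated where the argument breaks. Any repair must either add a hypothesis guaranteeing that the forced support $\{(i,j):\ \mbox{first nonzero of }(B_1(i,j),B_2(i,j))\mbox{ is }-1\}$ and the forbidden support $\{(i,j):\ \mbox{first nonzero is }+1\}$ are compatible with an $(n/2-1)$-regular $(0,1)$-matrix $Q$ (and even then one needs a feasibility argument for degree-constrained bipartite subgraphs, not just the line counts), or else weaken the conclusion, for instance by allowing the planes adjacent to $B_1,B_2$ to be ASMs rather than permutation matrices.
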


\begin{proof}
By Corollary \ref{cor:semi-ASM-extension-even}, there is a $(0,1,-1)$-matrix $B$ which extends $B_1+B_2$ with all entries nonzero and all 
line sums equal to $2$. Let $D$ be the $(0,1)$-matrix obtained from $B$ by replacing each $1$ with a 0 and multiplying the matrix by $-1$. Then all line sums of $D$ equal  $\frac{n}{2}-1$, and 
the matrix $P_1+\cdots+P_{\frac{n}{2}-1}$ is a $(0,1)$-matrix.
As in the proof of Theorem \ref{thm:central-ASM-extension} there are permutation matrices $P_{\frac{n}{2}+2},\ldots,P_n$ such that 
$A=[P_1,\ldots,P_{\frac{n}{2}-1},B_1,B_2,P_{\frac{n}{2}+2},\ldots,P_n]$ is an ASHM.
\end{proof}

\medskip

 Let $n\ge 2$ be an integer and let $A$ be an $n\times n$ ASM. An $n\times n$ ASM $B$ is an {\em ASM-mate} of $A$ provided that $A+B$ is a $(0,1,-1)$-matrix, that is, provided that $A$ and $B$ are sign-disjoint.

\begin{theorem}
 \label{thm:1-mate} 
Let $A$ be an ASM of order $n$. Then $A$ has an ASM-mate $B$ which may be  chosen as a permutation matrix.
\end{theorem}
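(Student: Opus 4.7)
The plan is to reformulate the problem as a bipartite matching question and invoke Hall's theorem. Since a permutation matrix has no $-1$'s, the sign-disjointness of $A$ with $B$ (where $B$ is a permutation matrix) reduces to requiring that the support of $B$ avoid every $+1$-position of $A$. Accordingly, I would form the bipartite graph $G$ on two copies of $\{1, \ldots, n\}$ with edge set $\{(i,j): A_{ij} \ne 1\}$ and show $G$ has a perfect matching; any such matching then yields the desired permutation matrix $B$.

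To verify Hall's condition on $G$, I would show that there is no pair $S$ of rows and $T$ of columns with $|S| + |T| > n$ for which the submatrix $A[S,T]$ consists entirely of $+1$'s. The key counting input is the ASM fact that any line contains at most $\lceil n/2 \rceil = \lfloor (n+1)/2 \rfloor$ entries equal to $+1$, since the nonzeros in a line alternate in sign and their number is odd. Because each row indexed by $S$ contains at least $|T|$ entries equal to $+1$ (those in columns of $T$), this bounds $|T| \le \lfloor (n+1)/2 \rfloor$, and symmetrically $|S| \le \lfloor (n+1)/2 \rfloor$. For even $n$ this already yields $|S| + |T| \le n$, contradicting $|S|+|T| > n$.

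The remaining case, $n$ odd with $|S| = |T| = (n+1)/2$, is the main obstacle and requires a rigidity argument. Equality in the row bound forces every row indexed by $S$ to contain exactly $(n+1)/2$ entries equal to $+1$ and no zeros, hence to equal the fully alternating row $(+1,-1,+1,\ldots,+1)$ whose $+1$'s occupy precisely the odd columns. So $T$ must equal the set of odd columns, and by the symmetric argument $S$ equals the set of odd rows. In particular $1 \in S$, so $A_{1,2} = -1$; but $A_{1,2}$ is the top entry of column $2$, hence (being nonzero) its first nonzero entry, violating the ASM requirement that the first nonzero of every column be $+1$. This contradiction completes the verification.

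Hall's condition therefore holds, $G$ has a perfect matching, and this matching gives a permutation matrix $B$ whose support is disjoint from the $+1$'s of $A$, so $B$ is the desired ASM-mate. The only substantive step is the odd-$n$ extremal analysis; the rest is elementary counting plus the classical Hall/K\"{o}nig matching argument.
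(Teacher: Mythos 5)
Your proof is correct, but it takes a genuinely different route from the paper's for odd $n$. The paper splits into parity cases: for even $n$ it runs exactly your Hall-type argument (the bipartite graph on positions with $a_{ij}\in\{0,-1\}$, with the Hall condition verified by the bound of at most $n/2$ ones per line), but for odd $n$ it instead invokes its Corollary on semi-ASM extensions, which produces a full decomposition of $J_n$ into $n$ pairwise disjoint permutation matrices whose first $k$ members cover the $-1$'s of $A$ and whose last $k+1$ members cover the $+1$'s; any one of the first $k$ then serves as the mate. You handle both parities uniformly with Hall's theorem, paying for the odd case with an extremal rigidity analysis: equality in the bound $\lceil n/2\rceil$ forces the offending rows and columns to be fully alternating, pins $S$ and $T$ to the odd indices, and then row $1$ being fully alternating contradicts the ASM condition on column $2$ (indeed, the first row of an ASM is necessarily a unit vector). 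Your argument is more self-contained, avoiding the cycle-extension machinery of semi-ASMs; the paper's route buys a stronger structural conclusion (the full sign-respecting decomposition of $J_n$) that it has already built and reuses elsewhere, e.g.\ in Theorem \ref{thm:central-ASM-extension}. Both are valid; note only that the statement implicitly assumes $n\ge 2$ (as in the paper's definition of an ASM-mate), which is also where your odd-case rigidity step needs $(n+1)/2\ge 2$.
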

\begin{proof}
 First, assume $n$ is odd, say $n=2k+1$. By Corollary \ref{cor:odd-ext-perm} there are pairwise disjoint permutation matrices $P_1, P_2, \ldots, P_n$ such that the ones in $P_i$ $(i \le k)$ cover all the $-1$'s in $A$, and the ones in $P_i$ $(k < i \le n)$  cover all the $1$'s in $A$. In particular,  $P_1$ covers only entries that are $-1$ or $0$ (by the pairwise disjointness, and as the ones are covered by other $P_i$'s). Therefore $A+P_1$ is a $(0,1,-1)$-matrix, and $P_1$ is an ASM-mate of $A$. 
 
 Next, assume $n$ is even, say $n=2k$, and let $A=[a_{ij}]$. Let $I$ be a nonempty subset of $\{1, 2, \ldots, n\}$, and let 
\[
   J_I=\{j \le n: a_{ij} \in \{0,-1\} \;\mbox{\rm for at least one $i \in I$}\}. 
 \]
 If $|I| \le k$, then $|J_I| \ge k \ge |I|$, since every row in $A$ has at most $k$ 1's (as $A$ is an ASM) and therefore at least $k$ entries in $\{0,-1\}$. Otherwise, $|I| > k$, and then every column in $A$ must contain a 0 or a $-1$ in some row in $I$, as every column of $A$ contains at most $k$ 1's. So, then $|J_I| =n \ge |I|$.
 
 This proves that $|J_I| \ge |I|$ for every nonempty subset $I$ of $\{1, 2, \ldots, n\}$, and by Hall's (marriage) theorem, there must be a permutation matrix $P$ whose ones are in the positions where $A$ has entries in $\{0,-1\}$. Therefore $A+P$ is a $(0,1,-1)$-matrix, and $P$ is an ASM-mate of $A$. 
\end{proof}



\medskip

By definition, if we delete planes in a hypermatrix, we obtain a subhypermatrix. We next show that any $(0, \pm 1)$-hypermatrix is the subhypermatrix of some ASHM. This result is useful for  constructing classes of (``random'')  ASHMs. A similar result for ASM submatrices was established in \cite{BKMS13}.

Let $A=[A_1, A_2, \ldots, A_p]=[a_{ijk}]$ be an $m \times n \times p$ hypermatrix where $A_1, A_2, \ldots, A_p$ are the horizontal-planes of $A$. Let $k<p$. Define the $(0,\pm 1)$-matrix   $C^{A,k} =[c_{ij}]$  of size $m \times n$ as follows. Let  $i \le m$ and $j \le n$. If the last nonzero, if any,  in 
the (partial) vertical line $(a_{ij1}, a_{ij2}, \ldots, a_{ij,k-1})$ is $-1$ (resp. $1$) and $a_{ijk}=-1$ (resp. $1$), we define $c_{ij}=1$ (resp. $c_{ij}=-1$).   All other entries of $C$ are zero. 
 The idea of the construction algorithm is to insert planes between $A_{k-1}$ and $A_k$ based on the matrix $C^{A,k}$.
The following result will be useful, see \cite{BR91}. A {\em subpermutation matrix} is a $(0,1)$-matrix with at most one 1 in every row and column.

\begin{theorem}
 \label{thm:decomp-subperm} \textup{(\cite{BR91})}
   Let $A$ be a $(0,1)$-matrix with maximum line sum $t$. Then there are $t$ pairwise disjoint subpermutation matrices $P_1, P_2, \ldots, P_t$ such that 
 \[
     A= P_1 + P_2 + \cdots + P_t.
 \]
\end{theorem}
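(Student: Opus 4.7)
The plan is to reduce to Theorem \ref{thm:nonneg-decomp} by padding $A$ into a square matrix all of whose line sums equal $t$. Say $A$ is $m\times n$ with row sums $r_1,\ldots,r_m$ and column sums $c_1,\ldots,c_n$; by hypothesis $r_i,c_j\le t$, and $\sum_i r_i=\sum_j c_j$ equals the total number of $1$'s in $A$.

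I would form the $(m+n)\times(m+n)$ block matrix
\[
  M=\left[\begin{array}{cc} A & D\\ E & F\end{array}\right],
\]
where $D$ is the $m\times m$ diagonal matrix with $D_{ii}=t-r_i$, $E$ is the $n\times n$ diagonal matrix with $E_{jj}=t-c_j$, and $F$ is any $n\times m$ nonnegative integer matrix with row sum vector $(c_1,\ldots,c_n)$ and column sum vector $(r_1,\ldots,r_m)$. Such an $F$ exists since the two margin totals agree; a greedy northwest-corner construction from the transportation problem produces one. A direct check confirms that every line of $M$ sums to~$t$. Invoking Theorem \ref{thm:nonneg-decomp}, I would write $M=Q_1+Q_2+\cdots+Q_t$ with each $Q_s$ an $(m+n)\times(m+n)$ permutation matrix, and let $P_s$ be the $m\times n$ upper-left submatrix of $Q_s$. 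Since $Q_s$ has at most one $1$ in each row and column, so does $P_s$, hence $P_s$ is a subpermutation matrix. The $P_s$ are pairwise disjoint because the $Q_s$ are, and $\sum_s P_s$ equals the upper-left block of $M$, which is $A$.

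The only nontrivial ingredient is the existence of the auxiliary block $F$ realizing the prescribed margins, but this is standard once one notes that the two margin vectors have a common total. An alternative route, bypassing the padding, is to associate with $A$ the bipartite graph whose edges are the $1$-positions (maximum degree $t$) and apply K\"onig's edge-coloring theorem to obtain $t$ matchings, each corresponding to a subpermutation matrix, which together partition the $1$-positions of $A$.
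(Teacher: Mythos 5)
The paper does not prove this statement at all: it is quoted as a known result and attributed to \cite{BR91}, so there is no in-paper argument to compare against. Your proof is a correct, self-contained derivation, and the reduction you choose --- padding $A$ to a square nonnegative integral matrix $M$ with all line sums equal to $t$ and invoking Theorem \ref{thm:nonneg-decomp} --- is the standard way to obtain this result; your alternative via K\"onig's edge-coloring theorem for bipartite graphs of maximum degree $t$ is equally valid and is really the same argument in graph language. One small slip: you justify the pairwise disjointness of the $P_s$ by saying ``the $Q_s$ are pairwise disjoint,'' but Theorem \ref{thm:nonneg-decomp} gives no such guarantee, and indeed the $Q_s$ cannot all be disjoint when $M$ has an entry exceeding $1$, which happens in your diagonal blocks $D$ and $E$ whenever some $t-r_i\ge 2$ or $t-c_j\ge 2$. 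The fix is immediate and does not use the $Q_s$ at all: each $P_s$ is a $(0,1)$-matrix and $\sum_s P_s=A$ is a $(0,1)$-matrix, so no position can carry a $1$ in two different $P_s$. With that one-line correction the argument is complete.
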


We now formulate the announced result. 

\begin{theorem}
 \label{thm:1-mate-more} 
Let $A'$ be an $(0, \pm 1)$-hypermatrix of size $m' \times n' \times k'$. Then there is an integer $n\ge m',n',k'$ and   an $n\times n\times n$  ASHM $A$ with $A'$ as a subhypermatrix. In fact, $A$ may be obtained from $A'$ by repeatedly inserting new planes $($horizontal, column-vertical and row-vertical$)$ that are subpermutation matrices and  their negatives. 
\end{theorem}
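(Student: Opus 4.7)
The plan is to grow $A'$ into an $n \times n \times n$ ASHM by an iterative insertion procedure in which every inserted plane is either a subpermutation matrix or the negative of one. The matrix $C^{A,k}$ defined just before the theorem drives the construction, and Theorem~\ref{thm:decomp-subperm} provides the required decomposition into subpermutation pieces.

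First I would repair the vertical-line alternating property by horizontal-plane insertions. For each current junction $k \ge 2$, compute $C^{A,k}$ and split it as $C^{A,k} = C^+ - C^-$ where $C^+$ and $C^-$ are its positive and negative parts, each a $(0,1)$-matrix with disjoint supports. Apply Theorem~\ref{thm:decomp-subperm} to write $C^+ = P_1 + \cdots + P_s$ and $C^- = Q_1 + \cdots + Q_t$ as sums of pairwise disjoint subpermutation matrices, and insert the planes $P_1, \ldots, P_s, -Q_1, \ldots, -Q_t$ between $A_{k-1}$ and $A_k$ as new horizontal layers. At any position $(i,j)$ with $C^{A,k}_{ij} = +1$, exactly one inserted plane carries a $+1$ at $(i,j)$ and all others are zero there, so the vertical segment is repaired to the alternating form $\ldots,-1,0,\ldots,0,+1,0,\ldots,0,-1,\ldots$; the symmetric argument handles $C^{A,k}_{ij}=-1$, and at positions with $C^{A,k}_{ij}=0$ the prior alternating property is preserved by the very definition of $C^{A,k}$. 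Iterating over all $k$ removes every vertical-line violation.

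Next I would repeat the symmetric construction in the other two directions, inserting column-vertical planes (subpermutation matrices and their negatives) to repair the column lines, and row-vertical planes to repair the row lines. The crucial structural point is that a subpermutation matrix contributes at most one nonzero to any row and to any column; therefore inserting such a plane in one direction creates new lines (in the two other directions) that are vacuously alternating, and it leaves the existing lines in those two directions unchanged. Consequently the three phases can be executed in sequence without mutual interference, and after finitely many insertions every row, column and vertical line of the enlarged hypermatrix is alternating.

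To finish, I would append further outer planes on each of the six boundaries so that every line begins and ends with a $+1$ and has line sum $1$, and then pad with permutation-matrix planes (via Theorem~\ref{thm:SASM-completion2} applied along each of the three axes) until the hypermatrix is a cube of side $n \ge \max\{m',n',k'\}$; the outer planes are again obtained by decomposing the appropriate boundary-correction matrix into subpermutation pieces, so every plane that is ever inserted is of the form required by the theorem. The main obstacle is to verify that insertions in one direction do not re-introduce violations already resolved in another direction, and that the boundary padding can always be carried through so that each horizontal, column-vertical and row-vertical plane of the final hypermatrix is an ASM. Both points reduce to the at-most-one-nonzero-per-row-and-column property of subpermutation matrices together with the feasibility of Theorem~\ref{thm:SASM-completion2} (which in turn rests on Theorem~\ref{thm:nonneg-decomp}), and these together close the argument.
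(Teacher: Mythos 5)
Your proposal follows the paper's construction essentially step for step: repair the vertical lines junction by junction using $C^{A,k}$ and Theorem~\ref{thm:decomp-subperm}, then repeat in the other two directions, with the non-interference of the three phases as the crux. The approach is the right one, but one claim you make at that crux is false. You assert that an inserted plane, being a subpermutation matrix or the negative of one, ``creates new lines (in the two other directions) that are vacuously alternating.'' For a plane of the form $-Q$ this is wrong: the lines of the hypermatrix lying \emph{inside} the inserted plane each contain at most one nonzero, and when that nonzero is a $-1$ the line is not alternating, since the nonzeros of an alternating line must begin and end with $+1$. Thus the $-Q$ planes inserted in your second phase (to repair row lines) create fresh vertical-line and column-line violations; they do interfere with the direction you had already repaired. (The paper confines its claim to the \emph{existing} vertical lines, which are untouched because each vertical line lies in exactly one row-vertical plane; the new lines created inside the inserted planes are a separate matter.)

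The gap is repairable, and the repair is worth stating: every line created inside an inserted plane has at most one nonzero, and a line whose only nonzero is a single $-1$ is fixed by inserting a $+1$ on either side of it, i.e., by purely positive subpermutation planes, which create no new violations; so one further round of positive insertions terminates the process. Some such termination argument is needed, and ``vacuously alternating'' does not supply it. Separately, your final padding step is both unnecessary and problematic: once every line in all three directions is alternating, every line sum is $1$, and counting the total sum of the entries via the vertical, row, and column lines of an $m\times n\times k$ array gives $mn=nk=mk$, so the hypermatrix is automatically a cube and already an ASHM. One cannot pad an ASHM with extra permutation planes without pushing some line sums to $2$; the counting observation is how the paper closes the argument.
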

\begin{proof}
 Let $A'=[A_1, A_2, \ldots, A_{k'}]=[a_{ijk}]$. We describe an algorithm for constructing the desired ASHM $A$. Start with the given 
 $A'$, and let $t_0$ be the maximum number of $-1$'s in a line of $A_1$ (so $t_0 \ge 0$). By Theorem \ref{thm:decomp-subperm} there are pairwise disjoint $m'\times n'$ subpermutation matrices $P^1_1, P^1_2, \ldots, P^1_{t_0}$ such that the union of their supports coincides with the set of positions of the $-1$'s in $A_1$. Let 
 \[
    A=[P^1_1, P^1_2, \ldots, P^1_{t_0}, A_1, A_2, \ldots,  A_{k'}]
 \]
 Therefore each  vertical line of the subhypermatrix 
 \[
    \bar{A}=[P^1_1, P^1_2, \ldots, P^1_{t_0}, A_1]
 \]
 of $A$
 is $(1,*)$-alternating (which includes the possibility of the zero vector). Next, if needed, we modify $A$ by adding horizontal planes between $A_1$ and $A_2$. Let $C=C^{A',t_0+2}$. Let $t_1$ (resp. $s_1$) be the maximum number of $1$'s (resp. $-1$'s) in a line of $C$. By Theorem \ref{thm:decomp-subperm}, there are pairwise disjoint subpermutation matrices $P^2_1, P^2_2, \ldots, P^2_{t_1}$ (resp. $Q^2_1, Q^2_2, \ldots, Q^2_{s_1}$) such that the union of their supports coincides with the set of positions of the $1$'s (resp. the $-1$'s)  in $C$, and, moreover,  $P^2_i$ and $Q^2_j$  are pairwise disjoint for each $i,j$. Then we update $A$ so that
 \[
    A=[P^1_1, P^1_2, \ldots, P^1_{t_0}, A_1, P^2_1, P^2_2, \ldots, P^2_{t_1}, -Q^2_1, -Q^2_2, \ldots, -Q^2_{s_1}, A_2, \ldots,  A_{k'}].
 \] 
  Then, by construction, each vertical  line in $A$ up to, and including, the matrix $A_2$  is $(1,*)$-alternating.  We continue in the same way, by inserting planes that are subpermutation matrices or their negatives between $A_i$ and $A_{i+1}$, so that the resulting matrix $A$ has $(1,*)$-alternating vertical lines up to its last plane $A_{k'}$. Then we identify those $(i,j)$ for which the vertical line $(i,j,*)$ misses a final 1, to make the line alternating, and add planes as above to achieve that. As a result, each vertical line in $A$ is alternating (and, therefore, nonzero). 
  
 The next step is to consider the row-vertical planes $A_{i**}$ in $A$.  We repeat the procedure above by inserting suitable row-vertical planes so that new hypermatrix $A$ has alternating row lines. Note that these plane insertions do not affect the alternating property of the vertical lines because every vertical line is included in (exactly) one row-vertical plane. 
 
 Finally, we repeat the procedure another time, now for the column-vertical planes $A_{*j*}$ in $A$. Again, the previous alternating properties are maintained. The resulting $m \times n \times k$ matrix $A$ has only alternating lines in each direction. Since every line sum in $A$ is 1,  summation shows that  $m=n=k$, so $A$ is an ASHM as desired.
 \end{proof}

Finally we mention that a different kind of ASM completion problem was considered in \cite{BK15}

\section{Coda}
\label{sec:coda}

As a generalization of alternating sign matrices and latin squares, we have initiated the study of the fascinating class of ASHMs and the associated class of ASHM latin squares. In view of the results obtained, there are many possible directions to pursue. In particular, a characterization of ASHM-LSs would be very interesting. A specific question is the following.

\begin{problem}\label{prob:maxnumber}{\rm 
What is the maximum number of times  an integer can occur as an entry in an $n\times n$ ASHM-LS? The $7\times 7$ ASHM-LS in (\ref{eq:n=7}) and the subsequent discussion shows that an integer can occur $2n$ times in an $n\times n$ ASHM-LS. Is the maximum equal to $2n$?
}
\end{problem}

As a  step in characterizing the entries of an ASHM-LS, we would like to characterize  the  weighted projections of ASMs.

\begin{conjecture}\label{conj:projection}{\rm
Let $(c_1,c_2,\ldots,c_n)$ be a vector of positive integers such that
$(c_1,c_2,\ldots,c_n)\preceq (n,n-1,\ldots,2,1)$. Then there exists an $n\times n$ ASM whose weighted vertical projection is $(c_1,c_2,\ldots,c_n)$.

}
\end{conjecture}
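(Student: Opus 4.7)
My plan is to proceed by induction on $n$, using the structure at the corners of an ASM whenever $v$ has an extreme entry. If $c_{j_0}=n$, a short computation shows that the only column of an $n\times n$ ASM with weighted vertical projection equal to $n$ is $e_1$: for if column $j_0$ has nonzeros at rows $i_1<i_2<\cdots<i_{2p+1}$ with alternating signs $+,-,+,\ldots,+$, then
\[
v_{j_0} \;=\; (n-i_1+1)-\sum_{k\ge 1}(i_{2k+1}-i_{2k}) \;<\; n
\]
unless $p=0$ and $i_1=1$. Thus $c_{j_0}=n$ forces column $j_0$ to be $e_1$ and (as row $1$ is always a unit row vector) row $1$ to be $e_{j_0}^T$. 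Deleting this row and column yields an $(n-1)\times(n-1)$ subproblem with target $v'=v\setminus\{c_{j_0}\}$, and a direct partial-sum telescoping (using $v_{[1]}=n$) gives $v'\preceq z_{n-1}$. Induction then supplies a smaller ASM, and reinserting row 1 and column $j_0$ as unit vectors produces $A$. The case $c_{j_0}=1$ is symmetric, forcing row $n$ and column $j_0$ to be the corresponding unit vectors.

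The principal obstacle is the \emph{interior case}, in which every $c_j\in\{2,\ldots,n-1\}$. My plan here is to reformulate via the partial-column-sum bijection $q_{ij}:=\sum_{i'\le i}a_{i'j}$. Under this map, $n\times n$ ASMs correspond to $(0,1)$-matrices $Q=[q_{ij}]$ with row sum vector $(1,2,\ldots,n)$, last row equal to $(1,\ldots,1)$, and consecutive-row supports satisfying Gelfand--Tsetlin interlacing: writing the supports of rows $i-1,i$ as $\{a_1<\cdots<a_{i-1}\}$ and $\{b_1<\cdots<b_i\}$, $b_1\le a_1\le b_2\le\cdots\le a_{i-1}\le b_i$. (This interlacing is precisely the requirement that the rows of $A$, recovered via $a_{ij}=q_{ij}-q_{i-1,j}$, be alternating; it is equivalent to $R_{ij}-R_{i-1,j}\in\{0,1\}$ for all $i,j$, where $R_{ij}:=\sum_{j'\le j}q_{ij'}$.) Crucially, the column sum vector of $Q$ equals $v(A)$, so the conjecture reduces to exhibiting an interlacing $Q$ with row sums $(1,2,\ldots,n)$ and column sums $v$. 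By the Gale--Ryser theorem---and using that $z_n$ is self-conjugate---the hypothesis $v\preceq z_n=(1,2,\ldots,n)^\ast$ is exactly the condition ensuring that some $(0,1)$-matrix $Q_0$ with these marginals exists.

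The crux is to modify $Q_0$ into an interlacing-compatible $Q$ without changing its marginals. My proposed route is a Ryser-type interchange argument: whenever consecutive rows $i-1,i$ of $Q$ violate interlacing, the leftmost column of violation identifies a pair of columns at which one can swap $(0,1)$-entries between the two rows, preserving both row and column marginals while strictly decreasing the defect
\[
D(Q):=\sum_{i,j}\Bigl(\max\{0,R_{ij}-R_{i-1,j}-1\}+\max\{0,R_{i-1,j}-R_{ij}\}\Bigr).
\]
Iterating drives $D(Q)$ to zero and delivers the required ASM. The hardest step will be showing that such a defect-reducing swap always exists and does not introduce new violations at other row pairs; I expect a first-violation analysis supplemented by an alternating-path argument on the bipartite graph of $0/1$ disagreements between rows $i-1$ and $i$ to suffice. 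A fall-back, if the local interchange is obstructed, is to recast the interlacing inequalities as linear constraints defining a totally unimodular polytope and invoke max-flow integrality to obtain an integer (hence $(0,1)$) solution directly from the Gale--Ryser feasibility.
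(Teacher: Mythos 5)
The statement you are proving is posed in the paper as an open conjecture (it appears in the concluding section, Conjecture~\ref{conj:projection}); the paper contains no proof of it, so there is nothing to compare your argument against. What you have submitted is a plan rather than a proof, and the plan leaves the essential difficulty unresolved. Your preliminary reductions are correct and worthwhile: the boundary analysis (an entry $c_{j_0}=n$ forces column $j_0$ to be $e_1$ and row $1$ to be $e_{j_0}^T$, and the residual vector is majorized by $z_{n-1}$) is sound, and the reformulation via partial column sums $q_{ij}=\sum_{i'\le i}a_{i'j}$ is exactly right: $Q$ is a $(0,1)$-matrix with row sums $(1,2,\ldots,n)$, its column sum vector equals $v(A)$, and the alternating condition on the rows of $A$ is equivalent to the Gelfand--Tsetlin interlacing of consecutive supports (this is the standard bijection between ASMs and monotone triangles, and it matches the mechanism used in the paper's proof of Theorem~\ref{thm:ASHM-majorization}, which establishes the \emph{necessity} direction $v\preceq z_n$). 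The Gale--Ryser observation that $v\preceq z_n=(1,2,\ldots,n)^{*}$ guarantees \emph{some} $(0,1)$-matrix $Q_0$ with these marginals is also correct.

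The gap is the step you yourself flag as the crux: converting $Q_0$ into a matrix with the same marginals that additionally satisfies the interlacing constraints. This is not a technicality --- it \emph{is} the conjecture, restated. Your proposed interchange argument asserts, without proof, that at the leftmost violation between rows $i-1$ and $i$ there is always a marginal-preserving $2\times 2$ swap that strictly decreases the defect $D(Q)$; but such a swap changes rows $i-1$ and $i$ in two columns and can create or worsen violations between rows $i-2,i-1$ and rows $i,i+1$, so monotone decrease of $D$ across \emph{all} row pairs is exactly what needs to be established, and no argument is given. (Note also that column permutations are unavailable here, since the target column-sum vector $(c_1,\ldots,c_n)$ is prescribed in order and permuting columns destroys the ASM property, so the interlacing must be achieved for that specific ordering.) The fall-back via total unimodularity is likewise unsubstantiated: the system combining the column-sum equations $\sum_i q_{ij}=c_j$ with the prefix-difference constraints $0\le R_{ij}-R_{i-1,j}\le 1$ is not obviously TU, and even granting integrality you would still need to exhibit a feasible fractional point, which again amounts to the unproven existence claim. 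As it stands, the argument proves nothing beyond what the paper already establishes (necessity of the majorization condition) together with a correct but inconclusive reformulation of the sufficiency question.
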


Another possible and worthwhile direction
is provided by
the concept of orthogonality of latin squares which is fundamentally important and well-studied  in combinatorics.

Let  
\begin{equation}\label{eq:latinsquares}
L_1=1P_1+2P_2+\cdots +nP_n \;\mbox{  and } \;L_2=1Q_1+2Q_2+\cdots+nQ_n
\end{equation}
be two $n\times n$ latin squares, where $P_1,P_2,\ldots,P_n$ and $Q_1,Q_2,\ldots,Q_n$ are $n\times n$ permutation matrices with $P_1+P_2+\cdots +P_n=J_n$
and $Q_1+Q_2+\cdots+Q_n=J_n$. Then $L_1$ and $L_2$ are {\it orthogonal} provided when they are juxtaposed to form a matrix $L_1*L_2$ of ordered pairs, all pairs $(i,j)$ with $1\le i,j\le n$ occur
(equivalently, there are no repeats); $L_1$ and $L_2$ are then  called  {\it orthogonal LS-mates}.
If $n>2$ but $n
\ne 6$, the celebrated theorem of Bose, Parker, and Shrikhande
(see \cite{R1963} for a discussion and more references) asserts that there exists a pair of orthogonal $n\times n$ latin squares. It is well-known that for each $n\ge 2$, the maximum number of pairwise orthogonal latin squares is bounded by $(n-1)$.

Another way to view orthogonality of latin squares is the following:
The $n\times n$ latin squares $L_1$ and $L_2$ in (\ref{eq:latinsquares}) are orthogonal if and only if  the standard inner products (sum of products of corresponding entries) of the $P_i$ and the $Q_j$ satisfy
\[\langle P_i, Q_j \rangle=1\quad  (1\le i,j\le n),\]
 equivalently,
each pair $P_i$ and $Q_j$ have exactly one 1 in common positions. 
Let us define  $P_i$ and $Q_j$ to be {\it orthogonal permutation-mates} when $\langle P_i, Q_j \rangle=1$. Thus the latin squares $L_1$ and $L_2$ in (\ref{eq:latinsquares})  are orthogonal LS-mates if and only if $P_i$  and $Q_j$ are orthogonal permutation-mates for all $i$ and $j$.
If $n\ge 2$, every $n\times n$ permutation matrix has an orthogonal permutation-mate (in fact, exactly $nD_{n-1}$ orthogonal mates where $D_{n-1}$ is the $(n-1)$st derangement number). But, as is well known,  not every latin square has an orthogonal LS-mate.

Now consider two ASHM-LS's.
\begin{equation}\label{eq:ashm-ls}
L(A)=1A_1+2A_2+\cdots+nA_n\mbox{ and }L(B)=1B_1+2B_2+\cdots+nB_n,
\end{equation}
 where $A=[A_1,A_2,\ldots,A_n]$ and 
$B=[B_1,B_2,\ldots,B_n]$ are $n\times n\times n$ ASHMs. Thus
\[J_n=A_1+A_2+\cdots+A_n\mbox{ and } J_n=B_1+B_2+\cdots+B_n.\]
Generalizing orthogonality of latin squares, with the $A_i$ and $B_j$ playing the role of the permutation matrices $P_i$ and $Q_j$ above,
we define the ASHM-LS's $L(A)$ and $L(B)$ to be {\it orthogonal} provided
the inner products of the $A_i$ with the $B_j$ satisfy
\[\langle A_i,  B_j \rangle=1\quad (1\le i,j\le n).\]
If $L(A)$ and $L(B)$ are orthogonal, then we also say that $L(A)$ and $L(B)$ are {\it orthogonal ASHM-LS mates}.  It follows from the theorem of Bose, Parker, and Shrikhande  that, if $n>2$ but $n\ne 6$, then there exists a pair of orthogonal ASHM-LS's.  Extending our definition above from permutation matrices to ASHMs, we define the ASMs
  $A_i$ and $B_j$ to be  {\it orthogonal ASM-mates} when $\langle A_i,  B_j \rangle=1$. 

   \begin{example}\label{ex:mates}{\rm   The
ASM
\[\left[\begin{array}{rrr}
0&1&0\\
1&-1&1\\
0&1&0\end{array}\right]\]
does not have an orthogonal ASM-mate. The ASMs
\[\left[\begin{array}{rrrrr}
0&0&1&0&0\\
0&1&-1&1&0\\
1&-1&1&-1&1\\
0&1&-1&1&0\\
0&0&1&0&0\end{array}\right],
\left[\begin{array}{ccccc}
0&1&0&0&0\\
1&0&0&0&0\\
0&0&1&0&0\\
0&0&0&0&1\\
0&0&0&1&0\end{array}\right],\mbox{ and }
\left[\begin{array}{ccccc}
0&0&0&1&0\\
0&0&0&0&1\\
0&0&1&0&0\\
1&0&0&0&0\\
0&1&0&0&0\end{array}\right]\]
are pairwise orthogonal ASM-mates.
\endproof
}
\end{example}

This extension of the notion of orthogonality leads to several questions

\begin{problem}{\rm \label{prob:orthogonalASM} 

\begin{itemize}
\item[\rm (a)] Which $n\times n$ ASMs have an orthogonal ASM-mate? 
Which $n\times n$ ASMs have a permutation matrix as an orthogonal mate?
\item[\rm (b)] 
Does there exist a pair of orthogonal $6\times 6$ ASHM-LS's? 
\item[\rm (c)] Does there exist a set of more than $(n-1)$ pairwise orthogonal  $n\times n\times n$ ASHM-LS's?
\end{itemize}

If the answer to (b) is no, it may not  be straightforward to resolve as it contains the known, but not easily verifiable,  fact that there does not exist a pair of $6\times 6$ orthogonal latin squares. One possible way to   consider the question (c) is to start with a set of 
$(n-1)$ pairwise orthogonal $n\times n$ latin squares (so coming from $(n-1)$ $n\times n\times n$ permutation ASHMs, and construct an $n\times n\times n$
ASHM-LS orthogonal to each of them.  This possibility addresses the question as to whether or not there exists a set of $n$ pairwise orthogonal ASHM-LS's extending a set of $(n-1)$ ordinary latin squares. \endproof
}
\end{problem}

If $L$ is an $n\times n$ latin square, then $L$ corresponds to a  unique $n$-tuple $(P_1,P_2,\ldots,P_n)$ of $n\times n$ permutation matrices  with $J_n=P_1+P_2+\cdots+P_n$, that is, to a unique $n\times n\times n$ permutation hypermatrix $[P_1,P_2,\ldots,P_n]$, such that $L=1P_1+2P_2+\cdots+nP_n$.  This leads to the following question for ASHM-LS's.

\begin{problem} \label{[rob:latinunique}{\rm
Let ${\mathcal A}_n$ be the set of $n\times n\times n$ ASHMs and let ${\mathcal L}_n$ be the set of $n\times n$ ASHM-LS's.
If $L\in {\mathcal L}_n$. then there exists an $n\times n\times n$ 
ASHM $A=[A_1,A_2,\ldots,A_n]\in {\mathcal A}_n$ such that $L=1A_1+2A_2+\cdots+nA_n$. Is the ASHM $[A_1,A_2,\ldots,A_n]$ unique? Equivalently, is the mapping 
\[\ell: {\mathcal A}_n\rightarrow {\mathcal L}_n\]
 given by ${\ell} (A)=L$  injective? In words,
can two different $n\times n\times n$ ASHMs give identical ASHM-LS's? Note that if ${\ell}(A)$ is an ordinary LS, then we know from Theorem \ref{th:LS} that the ASHM $A$ is a permutation hypermatrix and thus is uniquely determined.
\endproof
}\end{problem}

\end{document}